\newcommand{\dimQ}{\mathrm{dim}_{\Q}}
\newcommand{\growth}{\mathrm{growth}}
\newcommand{\fg}{\mathcal{FG}}
\newcommand{\chivec}{\mathbf{\chi}}
\newcommand{\cell}{\mathrm{Cell}}
\newcommand{\symm}{\mathrm{Symm}}
\newcommand{\codim}{\mathrm{codim}}
\newcommand{\Der}{\mathbf{D}}   
\newcommand{\chains}{C}
\newcommand{\bbP}{\mathbb{P}}
\newcommand{\Mdef}[2]{\newcommand{#1}{\relax \ifmmode #2 \else $#2$\fi}}
\newcommand{\thick}{\mathrm{thick}}
\newcommand{\loc}{\mathrm{loc}}
\newcommand{\finbuilds}{\models}
\newcommand{\builds}{\vdash}
\newcommand{\tensor}{\otimes}
\newcommand{\lc}{H_{I}}
\newcommand{\sdr}{\rtimes}
\newcommand{\Hom}{\mathrm{Hom}}
\newcommand{\Ext}{\mathrm{Ext}}
\Mdef{\bhom}{\mathbf{\hat{H}om}}
\Mdef{\Mod}{\mathrm{mod}}
\newcommand{\st}{\; | \;}
\newcommand{\hash}{\#}
\newtheorem{thm}{Theorem}[section]
\newtheorem{lemma}[thm]{Lemma}
\newtheorem{prop}[thm]{Proposition}
\newtheorem{cor}[thm]{Corollary}
\theoremstyle{definition}
\newtheorem{defn}[thm]{Definition}
\newtheorem{notation}[thm]{Notation}
\newtheorem{example}[thm]{Example}
\newtheorem{remark}[thm]{Remark}
\newcommand{\qqed}{\qed \\[1ex]}
\renewenvironment{proof}[1][\hspace*{-.8ex}]{\noindent {\bf Proof #1:\;}}{\qqed}
\Mdef{\PH} {\Phi^H}
\Mdef{\PK} {\Phi^K}
\Mdef{\PL} {\Phi^L}
\Mdef{\PT} {\Phi^{\T}}
\Mdef{\ef}{E{\cF}_+}
\Mdef{\etf}{\tilde{E}{\cF}}
\Mdef{\eg}{E{G}_+}
\Mdef{\etg}{\tilde{E}{G}}
\Mdef{\infl}{\mathrm{inf}}
\Mdef{\defl}{\mathrm{def}}
\Mdef{\res}{\mathrm{res}}
\Mdef{\ind}{\mathrm{ind}}
\Mdef{\univ}{\mathcal{U}}
\newcommand{\CP}{\mathbb{C} P}
\Mdef{\Fp}{\mathbb{F}_p}
\Mdef{\Zpinfty}{\Z /p^{\infty}}
\Mdef{\Zpadic}{\Z_p^{\wedge}}
\newcommand{\bi}{\begin{itemize}}
\newcommand{\be}{\begin{enumerate}}
\newcommand{\bc}{\begin{center}}
\newcommand{\bd}{\begin{description}}
\newcommand{\ei}{\end{itemize}}
\newcommand{\ee}{\end{enumerate}}
\newcommand{\ec}{\end{center}}
\newcommand{\ed}{\end{description}}
\newcommand{\adjunction}[4]{
\diagram
#1:#2 \rrto<0.7ex> &&
#3  \llto<0.7ex> :#4
\enddiagram}
\newcommand{\lra}{\longrightarrow}
\newcommand{\lla}{\longleftarrow}
\Mdef{\we}{\mathbf{we}}
\Mdef{\fib}{\mathbf{fib}}
\Mdef{\cof}{\mathbf{cof}}
\Mdef{\BI}{\mathcal{BI}}
\newcommand{\fibre}{\mathrm{fibre}}
\newcommand{\ilim}{\mathop{ \mathop{\mathrm{lim}} \limits_\leftarrow} \nolimits}
\Mdef{\A}{\mathbb{A}}
\Mdef{\B}{\mathbb{B}}
\Mdef{\C}{\mathbb{C}}
\Mdef{\D}{\mathbb{D}}
\Mdef{\E}{\mathbb{E}}
\Mdef{\T}{\mathbb{T}}
\Mdef{\F}{\mathbb{F}}
\Mdef{\G}{\mathbb{G}}
\Mdef{\I}{\mathbb{I}}
\Mdef{\N}{\mathbb{N}}
\Mdef{\Q}{\mathbb{Q}}
\Mdef{\R}{\mathbb{R}}
\Mdef{\bbS}{\mathbb{S}}
\Mdef{\Z}{\mathbb{Z}}
\Mdef{\bA}{\mathbb{A}}
\Mdef{\bB}{\mathbb{B}}
\Mdef{\bC}{\mathbb{C}}
\Mdef{\bD}{\mathbb{D}}
\Mdef{\bE}{\mathbb{E}}
\Mdef{\bF}{\mathbb{F}}
\Mdef{\bG}{\mathbb{G}}
\Mdef{\bH}{\mathbb{H}}
\Mdef{\bI}{\mathbb{I}}
\Mdef{\bJ}{\mathbb{J}}
\Mdef{\bK}{\mathbb{K}}
\Mdef{\bL}{\mathbb{L}}
\Mdef{\bM}{\mathbb{M}}
\Mdef{\bN}{\mathbb{N}}
\Mdef{\bO}{\mathbb{O}}
\Mdef{\bP}{\mathbb{P}}
\Mdef{\bQ}{\mathbb{Q}}
\Mdef{\bR}{\mathbb{R}}
\Mdef{\bS}{\mathbb{S}}
\Mdef{\bT}{\mathbb{T}}
\Mdef{\bU}{\mathbb{U}}
\Mdef{\bV}{\mathbb{V}}
\Mdef{\bW}{\mathbb{W}}
\Mdef{\bX}{\mathbb{X}}
\Mdef{\bY}{\mathbb{Y}}
\Mdef{\bZ}{\mathbb{Z}}
\Mdef{\cA}{\mathcal{A}}
\Mdef{\cB}{\mathcal{B}}
\Mdef{\cC}{\mathcal{C}}
\Mdef{\mcD}{\mathcal{D}} 
\Mdef{\cE}{\mathcal{E}}
\Mdef{\cF}{\mathcal{F}}
\Mdef{\cG}{\mathcal{G}}
\Mdef{\mcH}{\mathcal{H}} 
\Mdef{\cI}{\mathcal{I}}
\Mdef{\cJ}{\mathcal{J}}
\Mdef{\cK}{\mathcal{K}}
\Mdef{\mcL}{\mathcal{L}}
\Mdef{\cM}{\mathcal{M}}
\Mdef{\cN}{\mathcal{N}}
\Mdef{\cO}{\mathcal{O}}
\Mdef{\cP}{\mathcal{P}}
\Mdef{\cQ}{\mathcal{Q}}
\Mdef{\mcR}{\mathcal{R}}
\Mdef{\cS}{\mathcal{S}}
\Mdef{\cT}{\mathcal{T}}
\Mdef{\cU}{\mathcal{U}}
\Mdef{\cV}{\mathcal{V}}
\Mdef{\cW}{\mathcal{W}}
\Mdef{\cX}{\mathcal{X}}
\Mdef{\cY}{\mathcal{Y}}
\Mdef{\cZ}{\mathcal{Z}}
\Mdef{\tA}{\tilde{A}}
\Mdef{\tB}{\tilde{B}}
\Mdef{\tC}{\tilde{C}}
\Mdef{\tE}{\tilde{E}}
\Mdef{\tH}{\tilde{H}}
\Mdef{\tK}{\tilde{K}}
\Mdef{\tL}{\tilde{L}}
\Mdef{\tM}{\tilde{M}}
\Mdef{\tN}{\tilde{N}}
\Mdef{\tP}{\tilde{P}}
\Mdef{\tf}{\tilde{f}}
\Mdef{\Ab}{\overline{A}}
\Mdef{\Bb}{\overline{B}}
\Mdef{\Cb}{\overline{C}}
\Mdef{\Db}{\overline{D}}
\Mdef{\Eb}{\overline{E}}
\Mdef{\Hb}{\overline{H}}
\Mdef{\Gb}{\overline{G}}
\Mdef{\Ib}{\overline{I}}
\Mdef{\Kb}{\overline{K}}
\Mdef{\Lb}{\overline{L}}
\Mdef{\Mb}{\overline{M}}
\Mdef{\Nb}{\overline{N}}
\Mdef{\Qb}{\overline{Q}}
\Mdef{\Tb}{\overline{T}}
\Mdef{\db}{\overline{d}}
\Mdef{\hb}{\overline{h}}
\Mdef{\qb}{\overline{q}}
\Mdef{\rb}{\overline{r}}
\Mdef{\tb}{\overline{t}}
\Mdef{\ub}{\overline{u}}
\Mdef{\vb}{\overline{v}}
\Mdef{\hc}{\hat{c}}
\Mdef{\he}{\hat{e}}
\Mdef{\hf}{\hat{f}}
\Mdef{\hA}{\hat{A}}
\Mdef{\hH}{\hat{H}}
\Mdef{\hJ}{\hat{J}}
\Mdef{\hM}{\hat{M}}
\Mdef{\hP}{\hat{P}}
\Mdef{\hQ}{\hat{Q}}
\Mdef{\thetab}{\overline{\theta}}
\Mdef{\phib}{\overline{\phi}}
\Mdef{\uA}{\underline{A}}
\Mdef{\uB}{\underline{B}}
\Mdef{\uC}{\underline{C}}
\Mdef{\uD}{\underline{D}}
\Mdef{\bolda}{\mathbf{a}}
\Mdef{\boldb}{\mathbf{b}}
\Mdef{\boldD}{\mathbf{D}}
\Mdef{\fm}{\frak{m}}
\Mdef{\eps}{\epsilon}
\begin{document}
\title{Complete intersections in rational homotopy theory.}

\author{J.P.C.Greenlees}
\address{School of Mathematics and Statistics, Hicks Building,
Sheffield S3 7RH. UK.}
\email{j.greenlees@sheffield.ac.uk}
\date{}

\author{K.Hess}
\address{EPFL, Lausanne, Switzerland}
\email{kathryn.hess@epfl.ch}

\author{S.Shamir}
\address{School of Mathematics and Statistics, Hicks Building,
Sheffield S3 7RH. UK.}
\email{s.shamir@sheffield.ac.uk}
\date{}

\begin{abstract}
We investigate various homotopy invariant formulations of commutative
algebra in the context of rational homotopy theory. The main subject
is the complete intersection condition, where we show that a growth
condition implies a structure theorem and that modules have multiply
periodic resolutions.
\end{abstract}

\thanks{This work was supported by EPSRC Grant number EP/E012957/1}
\maketitle

\tableofcontents

\section{Introduction}
\label{sec:Intro}

\subsection{Background.}
It has been very fruitful to adapt the definitions of commutative algebra
so that they apply in homotopy theory. The original motivation is
that it is useful to study a space $X$ through a ring of functions,
and for our purposes we will think of the ring $C^*(X;k)$ of cochains on $X$.
Of course, if the analogy is to be accurate, we need a commutative model
for $C^*(X;k)$, and if it is to be effective we need to render the definitions
homotopy invariant.

The prime example of this  is the connection between rational homotopy
theory and rational differential graded algebras (DGAs), but the availability of
good models for ring spectra has led to other useful examples in positive
characteristic. The emphasis in classical rational homotopy theory has
been on finite complexes and calculation, whereas one of the themes in
characteristic $p$ has been to consider classifying spaces of compact Lie groups
where the natural finiteness condition is that the cohomology rings in
question are Noetherian.
The purpose of the present paper is to take the ideas developed for
compact Lie groups and investigate them in the much more accessible
context of rational homotopy theory. From one point of view this is the
process of generalizing classical results \cite{FHT} from the case when
$H^*(X;\Q)$ is finite dimensional to the case when it is Noetherian, and given the available tools of rational
homotopy theory, this is reasonably straightforward. From another point
of view this is an opportunity to give new and accessible examples of
the theory, and to test expectations in a context where complete calculation
is often possible. Finally, the work suggests a number of questions we may
translate through the mirror \cite{AH} to local algebra, and we plan to investigate
these in future.
\subsection{Contents.}
On the commutative algebra side we restrict attention to commutative, local,
Noetherian rings. On the topological side, we restrict attention to
simply connected, rational spaces $X$ with $H^*(X)$ Noetherian.

We begin by considering analogues of regular and Gorenstein local rings.
Both are already well-known in rational homotopy theory, but it gives us an
opportunity to introduce some terminology and to express things in a
convenient language. For example,  we emphasize the importance of homotopy
invariant finiteness conditions and
 Morita theory. The regular spaces $X$ are precisely those which are
finite products of even Eilenberg-MacLane spaces. There are enormous
numbers of Gorenstein spaces, and they include manifolds and  all finite
Postnikov systems.

In the classical literature of rational homotopy theory, Gorenstein
 {\em duality} does not seem to be a familiar phenomenon  except in the
zero-dimensional cases (Poincar\'e Duality). We take the opportunity
in Appendix A to explain how the Local Cohomology Theorem from
\cite{DGI1} gives a Gorenstein duality statement in general.
From the point of view of rational
homotopy theory it shows (for example) that if $X$ is
any finite Postnikov system and $H^*(X)$ is Cohen-Macaulay, it is
automatically Gorenstein. Furthermore, without any hypothesis on the depth,
 $H^*(X)$ is generically Gorenstein. From the point of view of
homotopy invariant commutative algebra, it gives an extremely rich and
flexible source of examples.

The main subject of the paper is a study of the complete intersection
(ci) condition. We give a number of homotopy invariant definitions of ci
spaces, corresponding to different aspects of the ci condition. These
have very different characters, so it is striking that we are able to
show that in the rational context they are all equivalent.
The {\em structural} condition in commutative algebra is
that a  ci ring is a quotient of a regular local ring by a regular sequence.
We say that a simply connected rational space $X$ is  sci if it
is formed from a finite product of even Eilenberg-MacLane spaces by iterated
spherical fibrations (all definitions are given precisely in
Section \ref{sec:ci}).
Secondly, Benson and the first author \cite{BGzci}
introduced a finiteness condition (zci) on the category of modules analogous
to requiring all modules to have eventually multiply periodic resolutions.
This is a strengthening of
the condition in \cite{DGI2}.
 {In this paper we needed to relax the zci condition to two new finiteness
conditions, the eci and the nci conditions. }
Finally, in commutative algebra
there is the {\em growth} condition that $\Ext_R^*(k,k)$ has polynomial
growth (equivalent to the structural condition by Gulliksen's theorem); the
condition on a rational simply connected space is the growth condition
(gci) that $H_*(\Omega X)$ has polynomial growth.

Most  remarkable of the equivalences, perhaps, is the fact that the
growth condition implies a structure theorem:  $X$ is gci if and
only if  there is a fibration
$$F\lra X \lra KV,$$
where $KV$ is a finite product of even Eilenberg-MacLane spaces and
$\pi_*(F)$ is entirely in odd degrees.  {Amongst these spaces, those
in which $F$ has trivial $k$-invariants, so that $F$ is a product of odd spheres,
are the ones with pure Sullivan models.}

Another unexpected phenomenon is the importance of the Noetherian
condition. On the one hand, an iterated spherical fibration over a
product of even Eilenberg-MacLane spaces is obviously Noetherian.
One might naively think that requiring $H_*(\Omega X)$ to have polynomial growth
would be enough without requiring $H^*(X)$ to be Noetherian, but in
fact the Milnor-Moore theorem shows that this
just means $\pi_*(X)$ is finite dimensional. It is very striking that
the Noetherian condition is sufficient to give a structure theorem, and
we are grateful to  N.P.Strickland for a timely remark. We also thank
S.B.Iyengar for comments.

\subsection{The layout of the paper. }
After summarizing conventions in Section \ref{sec:conventions},
we begin in Section \ref{sec:Qhtpy} by giving a brief summary of the results
and terminology we need from rational homotopy theory.
Next, in Section \ref{sec:Morita} we describe the Morita theory for moving
between $C^*(X)$ and $C_*(\Omega X)$, and some results on cellularization
from \cite{DGI1}. We are then in a position to
consider rational DGAs in parallel with rational spaces. In a series of
sections we describe the definitions for rational DGAs and in particular for
Sullivan models of rational spaces. In Section \ref{sec:regular} we consider
regular rings and spaces, and in Appendix \ref{sec:Gorenstein} we discuss
Gorenstein spaces and Gorenstein duality.

From Section \ref{sec:centre} onwards, our main concern is for complete
intersections. First, Section \ref{sec:centre} discusses the centre of a
derived category, and how bimodules and Hochschild cohomology give elements
of the centre. Section \ref{sec:ci} introduces the definitions designed
to capture various aspects of hypersurface and ci spaces, which later
sections show to be equivalent. Section \ref{sec:sci} takes the structural
definition, and shows that any sci space has a standard form. Section
\ref{sec:zciisgci} gives the elementary argument that zci spaces satisfy
the gci growth condition. 
 {Section \ref{sec:sciisbci}
shows that sci spaces all have eventually multiply periodic
module theories. In Section \ref{sec:bciiszci} we calculate the Hochschild cohomology
of all pure sci spaces relative to their regular base and use the result to
show they are zci. }
Finally, and perhaps most
interestingly, in Section \ref{sec:gciissci} we show that the growth condition alone is enough to show that a space
has the standard sci form. Section \ref{sec:examples} gives a number of
explicit examples illustrating the phenomena we have studied, and showing
that the various classes of spaces are distinct. The final section explores
the role of the Noetherian condition further, and gives a characterisation
of the polynomial growth of $H_*(\Omega X)$ when we do not require
 $H^*(X)$ to be Noetherian in the same style as the multiply periodic
resolution property for ci spaces.

\section{Conventions.}
\label{sec:conventions}
\subsection{Terminology for triangulated categories.}
Recall that an object $X$ of a triangulated category $\cT$ is called
{\em small} if the natural map
$$\bigoplus_i [X,Y_i] \lra [X,\bigvee_i Y_i]$$
is an isomorphism for any set of objects $Y_i$.

A {\em thick} subcategory of
$\cT$ is a full subcategory closed under completion of triangles and taking
retracts. We write $\thick (X)$ for the smallest thick subcategory containing
$X$, and if $A \in \thick (X)$ we also say
`{\em $X$ finitely builds  $A$}' and write $X\finbuilds A$.

A {\em localizing } subcategory of
$\cT$ is a thick subcategory which is also closed under taking
arbitrary coproducts. We write $\loc (X)$ for the smallest localizing
 subcategory containing
$X$, and if $A\in \loc (X)$ we also say `{\em $X$ builds $A$}'
and write $X\builds A$.

Following \cite{DGI2} we say that $X$ is {\em virtually small} if
$\thick (X)$ contains a non-trivial small object $W$, and we say that
any such $W$ is a {\em witness} for the fact that $X$ is virtually small.

\subsection{Grading conventions.}
We will have cause to discuss homological and cohomological gradings.
Our experience is that this a frequent source of confusion, so we adopt
the following conventions. First, we refer to lower gradings as {\em degrees}
and upper gradings as {\em codegrees}. As usual, one may convert gradings to
cogradings via the rule $M_n=M^{-n}$. Thus both chain complexes and cochain
complexes have differentials of degree $-1$ (which is to say, of codegree 1).
This much is standard. However, since we need to deal with both chain complexes
and cochain complexes it is essential to have separate notation for
homological suspensions ($\Sigma^i$) and  cohomological suspensions
($\Sigma_i$): these are defined by
$$(\Sigma^iM)_n=M_{n-i} \mbox{ and } (\Sigma_iM)^n=M^{n-i}.$$
Thus, for example, with reduced chains and cochains  of a based
space $X$,  we have
$$\tilde{C}_*(\Sigma^iX)=\Sigma^i\tilde{C}_*(X) \mbox{ and }
\tilde{C}^*(\Sigma^iX)=\Sigma_i\tilde{C}^*(X).$$

\subsection{Other conventions.}
Unless explicitly stated to the contrary, all coefficients will
be in the rational numbers $\Q$, and for a rational vector space
$V$, we write $V^{\vee}=\Hom_{\Q}(V, \Q)$ for the dual vector space.

For brevity we write CGA for {\em commutative graded algebra}
(i.e., an algebra which is commutative in the graded sense
that $xy=(-1)^{|x|\cdot |y|}yx$),  DGA for {\em differential graded algebra},
and  CDGA for {\em commutative differential graded algebra}. When
we refer to modules over a DGA, we intend differential graded modules
unless otherwise stated.

Finally, for a space $X$, we write $C^*(X)$ for a CDGA model for
the cochains on $X$.

\section{Rational homotopy theory.}
\label{sec:Qhtpy}
Rational homotopy theory provides the ideal context to test
ideas about homotopy invariant commutative algebra. On the one hand
many aspects of commutative algebra are especially simple for
$\Q$-algebras and  on the other we can appeal to the intuition and
structures of homotopy theory.

\subsection{Terminology for commutative differential graded algebras.}
We will restrict attention to simply connected $\Q$-algebras of
finite type.

If $V$ is a graded rational vector space, we write $\Lambda (V)$
for the free CGA on $V$. This is a symmetric algebra on $V^{ev}$
tensored with an exterior algebra on $V^{od}$. A {\em Sullivan algebra}
is a CDGA which is free as a CGA on a simply connected graded vector
space $V$ of finite dimension in each degree, and whose differential
has the property that if $x \in V^s$ then $dx \in \Lambda (V^{<s})$.
It is minimal if in addition $d$ takes values in $\Lambda^+(V)^2$.
 {
A Sullivan algebra $(\Lambda(V^{od}\oplus V^{ev}),d)$ is
{\em pure} if $d(V^{od})\subset \Lambda V^{ev}$ and $d(V^{ev})=0$.
}

A {\em relative Sullivan algebra} is a map $M \lra M\sdr \Lambda (V)$
of CDGAs. Here the underlying CGA of $M\sdr \Lambda (V)$ is
$M\tensor \Lambda (V)$, and the notation expresses the fact that $M$ is
a sub-DGA and $\Lambda (V)$ is a quotient.

\subsection{Rational models for simply connected spaces.}
Any simply connected rational CW-complex with cohomology finite in each
degree is modelled by a simply connected rational CDGA (such as
the CDGA of PL polynomial differential forms $\cA_{PL} (X)$). Furthermore, any such CDGA has a Sullivan minimal
model, unique up to isomorphism. We write $C^*(X)$ for an unspecified
CDGA model for the cochains on $X$.
The process of building up a Sullivan algebra degree by degree
corresponds to building up a space using a Postnikov tower.

If $V$ is an evenly graded vector space, we write $KV$ for the associated
Eilenberg-MacLane space. In principle we could use the same notation
when $V$ has an odd summand, but we will not do so. Since odd spheres
are rational Eilenberg-MacLane spaces, if $W$ is a graded vector space in
odd degrees, we write $S(W)$ for the corresponding Eilenberg-MacLane space.

 {
We say a space $X$ is {\em pure} if $X$ has pure Sullivan algebra model.
}

A fibration $E \lra B$ with fibre $F$ can be modelled by a relative Sullivan
algebra $M\sdr \Lambda (V) \lla M$ where $M$ models $B$,
$M\sdr \Lambda (V)$ models $E$ and the fibre $F$ is then modelled by
$\Lambda (V)$.

\subsection{Homotopy Lie algebras and the Milnor-Moore theorem.}

Recall that $\pi_*(\Omega X)$ is a graded Lie algebra under the
Samelson product. More precisely there is a natural bilinear product
$$[\cdot, \cdot ]: \pi_i(\Omega X)\times \pi_j(\Omega X)\lra \pi_{i+j}
(\Omega X)$$
which is antisymmetric in the sense that
$$[x,y]=-(-1)^{|x|\cdot |y|}[y,x]$$
and satisfies the graded Jacobi identity
$$(-1)^{|x|\cdot |z|}[x,[y,z]]+(-1)^{|y|\cdot |x|}[y,[z,x]]+(-1)^{|z|\cdot |y|}[z,[x,y]]=0. $$
One way of forming a graded Lie algebra from an associative algebra $A$
is to define $[x,y]=x y -(-1)^{|x|\cdot |y|}y x$ for homogeneous elements
$x,y \in A$.
Associated to a graded Lie algebra is a universal associative algebra
$$U(L)=TL/I$$
where $TL$ is the tensor algebra on $L$ and $I$ is the ideal generated
by the relations
$[x,y]=x\tensor y -(-1)^{|x|\cdot |y|}y\tensor x$ for $x,y \in L$.
Henceforth we will generally omit the notation for the tensor product.

The most important algebraic fact about the universal enveloping algebra
of a Lie
algebra is the Poincar\'e-Birkoff-Witt theorem stating that if we
filter $U(L)$ by tensor length then there is an isomorphism
$$Gr (U(L))=\Lambda L. $$
In particular, the  growth rate of $U(L)$ is the same
as that of the symmetric algebra on $L^{ev}$.

The following theorem makes this relevant to topology.

\begin{thm} (Milnor-Moore \cite{MM})
If $X$ is a simply connected rational space then
$$H_*(\Omega X)=U(\pi_*(\Omega X)).\qqed$$
\end{thm}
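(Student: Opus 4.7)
The plan is to realize $H_*(\Omega X;\Q)$ as a connected cocommutative graded Hopf algebra, apply the algebraic Milnor--Moore structure theorem identifying such Hopf algebras over $\Q$ with universal enveloping algebras of their primitives, and then identify the primitives with $\pi_*(\Omega X)\tensor\Q$ via the Hurewicz map. Since $X$ is simply connected, $\Omega X$ is a homotopy-associative H-space whose diagonal is cocommutative, so $H_*(\Omega X;\Q)$ carries an associative Pontrjagin product and a cocommutative coproduct making it a connected graded Hopf algebra.

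Next, I would observe that the Hurewicz map $h : \pi_*(\Omega X)\tensor\Q \lra H_*(\Omega X;\Q)$ lands in the primitives $P$, and that a direct diagram chase comparing the defining commutator map $\Omega X \wedge \Omega X \lra \Omega X$ of the Samelson bracket with the algebraic commutator in the Pontrjagin ring shows $h$ to be a homomorphism of graded Lie algebras. By the universal property of $U(\cdot)$ it extends to a morphism of Hopf algebras
$$\Phi : U(\pi_*(\Omega X)\tensor\Q) \lra H_*(\Omega X;\Q).$$
The algebraic Milnor--Moore theorem yields $H_*(\Omega X;\Q)\iso U(P)$, so it suffices to prove that $h$ induces an isomorphism $\pi_*(\Omega X)\tensor\Q\lraiso P$: once this is in hand, $\Phi$ is $U$ applied to an isomorphism, and hence is itself an isomorphism.

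The main obstacle is this final identification of the primitives. I would argue by induction up the rational Postnikov tower of $X$. The base cases are the Eilenberg--MacLane spaces: rationally, $H_*(\Omega K(\Q,2n);\Q)$ is an exterior algebra on an odd generator and $H_*(\Omega K(\Q,2n+1);\Q)$ is a polynomial algebra on an even generator, and in both cases one checks directly that the algebra generator is the Hurewicz image of the canonical homotopy class and exhausts the space of primitives. The inductive step applies the Serre spectral sequence of the loop fibration of a rational principal Postnikov stage $K(V,n)\lra X_n\lra X_{n-1}$. Naturality of $h$, multiplicativity of the spectral sequence, and the compatibility of the Hopf-algebra structures on fibre, total space, and base reduce the identification of primitives to the Eilenberg--MacLane computations, completing the argument.
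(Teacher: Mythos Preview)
The paper does not prove this theorem: it is quoted as a classical result of Milnor and Moore \cite{MM} and marked with a terminal \qqed, so there is no proof in the paper to compare against. Your sketch is a reasonable outline of the standard argument --- recognise $H_*(\Omega X;\Q)$ as a connected cocommutative Hopf algebra, invoke the algebraic Milnor--Moore structure theorem to identify it with $U(P)$, and then show the Hurewicz map gives an isomorphism of Lie algebras $\pi_*(\Omega X)\otimes\Q\cong P$ --- and the Postnikov-tower induction you propose for the last step is one legitimate way to establish that identification.
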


In particular, we see that $H_*(\Omega X)$ has polynomial
growth if and only if $\pi_*(\Omega X)$ is finite dimensional, and
in that case the growth is of degree one less than
$$\dimQ (\pi_{ev}(\Omega X))= \dimQ (\pi_{od}( X)).$$

\subsection{Elliptic spaces.}
Perhaps for historical reasons, classical rational homotopy theory
concentrates on finite complexes, which is to say spaces with
$H^*(X)$ finite dimensional. These correspond to
0-dimensional local rings.

A simply connected rational space $X$ is called {\em elliptic}
if $H^*(X)$ and  $\pi_*(X)$ are both finite dimensional. It is
called {\em hyperbolic} if $\pi_*(X)$ has exponential growth.

A major theorem of rational homotopy theory is the dichotomy theorem
stating that a simply connected rational space with $H^*(X)$ finite
dimensional is either elliptic or hyperbolic. In a sense we will
make precise, elliptic spaces correspond to 0-dimensionsional complete
intersections.


\subsection{Noether normalization.}

Polynomial rings on even degree generators play a special role
in the theory. To start with, they are {\em intrinsically formal}: if
$P$ is a polynomial ring on even degree generators, then if $A$ is
any CDGA with $H^*(A)\cong P$, we have a quasi-isomorphism $A\simeq P$.
Indeed, $P$ has
a useful universal property: for any CDGA
$A$, and any map $\theta:P \lra H^*(A)$ of CGAs, a choice of representative
cycles for the polynomial generators allows us to realize $\theta$
by a map $\tilde{\theta}: P \lra A$ of CDGAs. Not only
are they convenient, we shall see they have a structural role:
polynomial rings on even degree generators provide the class of
CDGAs corresponding to regular local rings.
We think of $KV$ with $V$ even and finite dimensional
as a generalization of the rational classifying space of
a compact connected Lie group.

Polynomial rings can then be used in the study of general
Noetherian rings. Indeed, the Noether normalization theorem
states that if $R$ is a Noetherian connected CGA,  it is
finitely generated as a module over a polynomial subalgebra
$P$ on even degree generators. We will repeatedly use the
following counterpart of this statement.

\begin{prop}
If $X$ is a 1-connected rational space with $H^*(X)$ Noetherian,
there is a fibration
$$F \lra X \lra KV$$
of rational spaces where $V$ is even and  finite dimensional,  and
$H^*(F)$ is finite dimensional.
\end{prop}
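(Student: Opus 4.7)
The plan is to apply the classical Noether normalization theorem recalled just above to the graded algebra $H^*(X)$, and then use the universal property of polynomial algebras on even generators (also recalled above) to realize the resulting inclusion topologically as a map $X \lra KV$. Concretely, since $H^*(X)$ is a Noetherian connected CGA, Noether normalization provides a polynomial subalgebra $P \subset H^*(X)$ on finitely many even-degree generators over which $H^*(X)$ is finitely generated as a module. Let $V$ be the evenly graded, finite-dimensional vector space spanned by these generators, so that $H^*(KV) \cong P$ with the polynomial generators represented by the chosen generators of $V$.

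Next, apply the universal property to lift the CGA inclusion $\theta \colon P \hookrightarrow H^*(X) = H^*(C^*(X))$ to a CDGA map $\tilde\theta \colon P \lra C^*(X)$. Since $KV$ is simply connected and $P = H^*(KV)$ is intrinsically formal, this is realized by a map of simply connected rational spaces $X \lra KV$ inducing $\theta$ on cohomology. Taking $F$ to be its homotopy fibre yields the desired fibration
$$F \lra X \lra KV.$$

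It remains to check that $H^*(F)$ is finite dimensional. I would invoke the Eilenberg-Moore spectral sequence for this fibration, which converges because $KV$ is simply connected and everything is of finite type:
$$E_2^{*,*} = \Tor^{*,*}_{H^*(KV)}(H^*(X), \Q) \Longrightarrow H^*(F).$$
Since $P$ is a polynomial ring on finitely many even generators it has finite global dimension, and a finite free resolution of $\Q$ as a $P$-module is supplied by the Koszul complex, which is finite-dimensional over $\Q$. Tensoring with the finitely generated $P$-module $H^*(X)$ (itself of finite type in each degree) gives a finite-dimensional $\Q$-vector space at $E_2$, and the spectral sequence then forces $H^*(F)$ to be finite dimensional.

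The only technical wrinkle is convergence of Eilenberg-Moore, and this is not really the hard part here since $KV$ is simply connected. An alternative which avoids the spectral sequence altogether, and which I would probably prefer given the Sullivan-model language used throughout the paper, is to build a relative Sullivan model $P \lra P \sdr \Lambda(W) \lraequiv C^*(X)$ of the map $X \lra KV$. Then $C^*(F) \simeq \Q \tensor_P (P \sdr \Lambda(W)) = (\Lambda(W), \bar d)$, and its cohomology is $\Q \tensor_P^L H^*(X)$, computed by the Koszul complex on the even generators of $P$ with coefficients in $H^*(X)$. This is a finite complex of finitely generated $\Q$-modules, hence has finite-dimensional cohomology, which is exactly $H^*(F)$.
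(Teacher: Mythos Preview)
Your proof is correct and follows essentially the same approach as the paper: Noether normalization, realize the polynomial inclusion by choosing representative cocycles to get a CDGA map $P \lra C^*(X)$ and hence a fibration $F \lra X \lra KV$, and then deduce finite-dimensionality of $H^*(F)$ from the fact that $H^*(X)$ admits a finite free resolution over $P$. The paper compresses the last step into a single sentence, while you spell it out via the Eilenberg--Moore spectral sequence (or equivalently via the Koszul complex); your version is more explicit but the content is the same.
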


\begin{proof}
By Noether normalization, $H^*(X)$ is finite dimensional over
a polynomial algebra $P$ on even degree generators.
Choosing representative cycles, we have a map $P=KV \lra C^*(X)$ of CDGAs
realizing this map in cohomology.  This gives a fibration
$$F \lra X \lra KV.$$
To see  $H^*(F)$ is finite dimensional,  we note that $H^*(X)$ is a
finitely generated $P$-module, and therefore has a finite resolution
by finitely generated free $P$-modules.
\end{proof}

 We refer to this fibration as a Noether normalization
of $X$, and to $F$ as a {\em Noether fibre}
of $X$. The long exact sequence in homotopy shows
that the growth of $\pi_*(X)$ is the same as that of $\pi_*(F)$.

\begin{lemma} {\em (Dichotomy) }
For a space $X$ with $H^*(X)$ Noetherian, either $\pi_*(X)$ is finite
dimensional or it has exponential growth. The homotopy is finite dimensional
if and only if a Noether fibre is elliptic. \qqed
\end{lemma}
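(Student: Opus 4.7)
The plan is to reduce the statement to the classical dichotomy theorem for spaces with finite dimensional cohomology by invoking the Noether normalization proposition just established. Given $X$ with $H^*(X)$ Noetherian, fix a Noether normalization fibration
$$F \lra X \lra KV$$
with $V$ even and finite dimensional, and $H^*(F)$ finite dimensional.

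First I would compare the asymptotic behaviours of $\pi_*(X)$ and $\pi_*(F)$. Since $V$ is finite dimensional and $KV$ is a product of even Eilenberg-MacLane spaces, $\pi_*(KV) = V$ is concentrated in finitely many (even) degrees and is finite dimensional in total. The long exact homotopy sequence of the fibration therefore forces $\pi_n(F) \to \pi_n(X)$ to be an isomorphism in all but finitely many degrees, with finite dimensional kernel and cokernel elsewhere. Consequently $\pi_*(F)$ and $\pi_*(X)$ differ by a finite dimensional graded vector space; in particular $\pi_*(X)$ is finite dimensional if and only if $\pi_*(F)$ is, and the two have the same growth rate. This is precisely the remark made in the paragraph preceding the lemma.

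Next I would apply the classical dichotomy theorem, quoted in the subsection on elliptic spaces, to $F$. Since $F$ is simply connected, rational, and has $H^*(F)$ finite dimensional, $F$ is either elliptic (so $\pi_*(F)$ is finite dimensional) or hyperbolic (so $\pi_*(F)$ has exponential growth). Transporting this dichotomy across the comparison of the previous step gives the required dichotomy for $\pi_*(X)$, and shows that $\pi_*(X)$ is finite dimensional exactly when the Noether fibre $F$ is elliptic.

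There is no real obstacle here; the only point requiring a little care is checking that the conclusion does not depend on the choice of Noether normalization. This is automatic: the dichotomy for $X$ is a property of $X$ alone, and for any choice of $F$ the finite dimensional discrepancy between $\pi_*(X)$ and $\pi_*(F)$ identifies the ellipticity of $F$ with the finite dimensionality of $\pi_*(X)$.
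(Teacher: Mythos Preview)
Your proposal is correct and matches the paper's approach exactly: the lemma is stated with an immediate \qqed, and the preceding sentence (``The long exact sequence in homotopy shows that the growth of $\pi_*(X)$ is the same as that of $\pi_*(F)$'') together with the classical dichotomy theorem for finite complexes is precisely the argument you have written out. There is nothing to add.
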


This motivates the following extension of the notion of elliptic spaces
to spaces with Noetherian cohomology.

\begin{defn}
A space $X$ is {\em gci} (or satisfies the growth condition for a complete
intersection) if $H^*(X)$ is Noetherian and $\pi_*(X)$ is
finite dimensional.
\end{defn}

These spaces are the principal subject of the present paper, and
we return to them in Section \ref{sec:ci}.

\subsection{Some analogies.}

At the most basic level, cofibre sequences
$$X \lra Y \lra Z$$
of pointed spaces induce (additive) exact sequences
$$C^*(X) \lla C^*(Y) \lla C^*(Z) $$
of reduced cochains. On the other hand, fibrations
$$F \lra E \lra B$$
of spaces induce (multiplicative) exact sequences
$$ C^*(F)\stackrel{EM}\simeq C^*(E) \tensor_{C^*(B)} \Q \lla C^*(E) \lla C^*(B)$$
provided $C^*(B)$ is 1-connected so that an Eilenberg-Moore theorem (EM) holds,
and $C^*(B) \lra C^*(E)$ is a relative Sullivan model so that the tensor
product is derived.

More generally, a homotopy pullback square
$$\begin{array}{ccc}
Z\times_XY&\lra & Z\\
\downarrow&& \downarrow\\
Y&\lra & X\\
\end{array}$$
induces a homotopy pushout square
$$\begin{array}{ccc}
C^*(Z\times_XY)&\lla & C^*(Z)\\
\uparrow&& \uparrow\\
C^*(Y)&\lla & C^*(X)\\
\end{array}$$
in the sense that
$$C^*(Z\times_XY)\simeq C^*(Z)\tensor_{C^*(X)}C^*(Y)$$
if $X$ is $1$-connected, and one of the maps $C^*(X) \lra C^*(Z)$
or $C^*(X) \lra C^*(Y)$ is a relative Sullivan algebra so that the
tensor product is derived.


We should also record the Rothenberg-Steenrod theorem stating that for
a fibration $F\lra E \lra B$ we have equivalences
$$C_*(E)\simeq C_*(F)\tensor_{C_*(\Omega B)}k
\mbox{ and } C^*(E)\simeq \Hom_{C_*(\Omega B)}(k,C^*(F)).$$

\section{The Morita context}
\label{sec:Morita}
We have a simply connected rational space of finite type
$X$, and we consider the CDGA $C^*(X)$. We often wish to translate to
statements about the DGA $C_*(\Omega X)$. Throughout we work in
derived categories of DG-modules such as $\Der (C^*(X))$ or $\Der (C_*(\Omega X))$, so tensor products and Homs are derived. As mentioned above, we usually
refer simply to `modules' since the requirement that our modules respect the
differentials is  implicit in the category  we work in. The material is
adapted from \cite{tec,DGI1}.

\subsection{The two algebras.}

We need to see first that the $C^*(X)$ (a {\em commutative} DGA) and
 $C_*(\Omega X)$ (which will usually not be commutative) determine each other.
\begin{prop}
\label{prop:EMRS} If $X$ is 1-connected, there are equivalences
$$C_*(\Omega X) \simeq \Hom_{C^*(X)}(\Q,\Q)$$
and
$$C^*(X) \simeq \Hom_{C_*(\Omega X)}(\Q,\Q)$$
of DGAs.
\end{prop}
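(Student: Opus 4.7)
The plan is to derive both equivalences from the Eilenberg-Moore and Rothenberg-Steenrod theorems recalled at the end of Section \ref{sec:Qhtpy}, together with $\Q$-linear duality and the tensor-Hom adjunction.

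For the second equivalence, I would apply the Rothenberg-Steenrod formula $C^*(E) \simeq \Hom_{C_*(\Omega B)}(\Q, C^*(F))$ to the trivial fibration in which $E=B=X$, the structure map is the identity, and the fibre $F$ is the basepoint. Since $C^*(*) \simeq \Q$, this specializes immediately to
$$C^*(X) \simeq \Hom_{C_*(\Omega X)}(\Q, \Q).$$

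For the first equivalence, I would apply the Eilenberg-Moore theorem to the homotopy pullback square exhibiting $\Omega X$ as $* \times^h_X *$, which is permissible because $X$ is 1-connected. This yields
$$C^*(\Omega X) \simeq \Q \otimes_{C^*(X)} \Q,$$
where the right-hand side carries the DG coalgebra structure of the two-sided bar construction. The finite-type hypothesis inherited from the section conventions ensures $C_*(\Omega X) \simeq (C^*(\Omega X))^\vee$, turning this coalgebra into a DGA. The tensor-Hom adjunction then gives
$$C_*(\Omega X) \simeq \bigl(\Q \otimes_{C^*(X)} \Q\bigr)^\vee \simeq \Hom_{C^*(X)}\bigl(\Q, \Hom_{\Q}(\Q, \Q)\bigr) = \Hom_{C^*(X)}(\Q, \Q),$$
with its standard DGA structure by composition.

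The main obstacle will be checking that these are equivalences of DGAs rather than merely of underlying chain complexes. For the second equivalence this amounts to identifying composition in $\Hom_{C_*(\Omega X)}(\Q, \Q)$ with the cup product on $C^*(X)$ induced by the diagonal $X \to X \times X$; for the first it means identifying the dual of the bar coproduct with composition in $\Hom_{C^*(X)}(\Q, \Q)$, and then with the Pontryagin product on $C_*(\Omega X)$ coming from loop concatenation. Both compatibilities are classical but subtle, and are most cleanly obtained either by citing multiplicative refinements of Eilenberg-Moore and Rothenberg-Steenrod (for instance through the two-sided bar/cobar framework, as in Adams-Hilton or Félix-Halperin-Thomas), or by working explicitly with a semifree resolution $P \to \Q$ of $\Q$ as a $C^*(X)$-module and comparing the DGA structures on $\Hom_{C^*(X)}(P, \Q)$ and on $C_*(\Omega X)$ directly.
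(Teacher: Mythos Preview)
Your approach is essentially the same as the paper's: both equivalences are attributed to the Eilenberg--Moore and Rothenberg--Steenrod theorems respectively. The paper's proof is in fact just a two-line citation of these results, so your elaboration---specialising the Rothenberg--Steenrod formula to the identity fibration, dualising the Eilenberg--Moore bar construction, and flagging the multiplicative compatibility as the genuine subtlety---goes beyond what the paper itself supplies.
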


\begin{proof}
The first of these is the Eilenberg-Moore theorem \cite{EM}
and the second is the Rothenberg-Steenrod theorem \cite{RS}.
\end{proof}

\subsection{The adjunction.}
The proposition shows that we have an adjoint pair of functors
$$
\adjunction{\Hom_{C^*(X)}(\Q,\cdot)}
{\mbox{$C^*(X)$-mod}}
{\mbox{mod-$C_*(\Omega X)$}}
{(\cdot)\tensor_{C_*(\Omega X)}\Q}.$$
This induces an equivalence between subcategories of the derived
categories, but it will be enough for us to know we can move between
the module categories and to understand one composite.

\subsection{Cellularization.}

An object in the derived category of $C^*(X)$-modules is said
to be {\em $\Q$-cellular} if it is built from $\Q$ up to equivalence.
A map $M\lra N$ of $C^*(X)$-modules is a $\Q$-equivalence if
$$\Hom_{C^*(X)}(\Q,M)\lra \Hom_{C^*(X)}(\Q,N)$$
is a homology isomorphism. A map $M \lra N$ is  {\em $\Q$-cellular
approximation} if it is a $\Q$-equivalence and $M$ is $\Q$-cellular.
By the usual formal argument, this is unique up to equivalence,
and we write $\cell_\Q(N)\lra N$ for it.

We will give two models for $\Q$-cellularization, and it will be
valuable to know they are equivalent.

\subsection{The Morita model.} The first model comes from the
Morita context.

\begin{prop} \cite{tec, DGI1}
If $H^*(X)$ is Noetherian, the counit
$$\Hom_{C^*(X)}(\Q,M)\tensor_{C_*(\Omega X)}\Q \lra M$$
of the adjunction is $\Q$-cellularization. \qqed
\end{prop}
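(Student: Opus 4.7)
The plan is to verify the two defining properties of a $\Q$-cellular approximation for the counit $\varepsilon: N\tensor_{C_*(\Omega X)}\Q \lra M$, where $N=\Hom_{C^*(X)}(\Q,M)$: namely, (i) the source is $\Q$-cellular, and (ii) $\varepsilon$ is a $\Q$-equivalence.

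For (i), I would observe that the functor $(-)\tensor_{C_*(\Omega X)}\Q$ preserves coproducts and triangles and sends $C_*(\Omega X)$ to $\Q$. Since every $C_*(\Omega X)$-module $N$ is built from $C_*(\Omega X)$, its image $N\tensor_{C_*(\Omega X)}\Q$ lies in $\loc_{C^*(X)}(\Q)$, so it is $\Q$-cellular. No Noetherian hypothesis is needed here.

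For (ii), I would apply $\Hom_{C^*(X)}(\Q,-)$ to $\varepsilon$ and invoke the zigzag identity: the resulting map is split by the unit of the adjunction, so it is a quasi-isomorphism precisely when the unit
$$\eta_N: N \lra \Hom_{C^*(X)}(\Q, N\tensor_{C_*(\Omega X)}\Q)$$
is a quasi-isomorphism for $N=\Hom_{C^*(X)}(\Q,M)$. At the base case $N=C_*(\Omega X)$, Proposition \ref{prop:EMRS} identifies both sides with $C_*(\Omega X)$ and $\eta$ with the identity. To extend to general $N$, one argues by cellular induction, provided the two composite functors commute with the coproducts and triangles used to build $N$ from shifted copies of $C_*(\Omega X)$.

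The crux, and the main obstacle, is exactly this commutation: $\Hom_{C^*(X)}(\Q,-)$ need not in general preserve coproducts, because $\Q$ is not small over $C^*(X)$. This is where the Noetherian hypothesis enters through Noether normalization, giving a fibration $F\lra X\lra KV$ with $F$ elliptic, hence $C^*(F)$ a genuinely small $C^*(X)$-module. A standard Koszul construction built from the polynomial generators of $P=C^*(KV)$ then produces a finite witness $K\in\thick(\Q)$ that is small over $C^*(X)$ and through which $\Hom_{C^*(X)}(\Q,-)$ can be computed on $\Q$-cellular objects. This is precisely the proxy-smallness of $\Q$ established in \cite{DGI1}, and it is what allows the base-case equivalence $\eta_{C_*(\Omega X)}$ to propagate to all $N$, completing step (ii).
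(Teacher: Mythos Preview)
Your proposal is correct and follows essentially the same route as the paper. The paper is even terser: it simply cites \cite{tec,DGI1} for the general Morita machinery and observes that the only hypothesis to check is proxy-regularity of $\Q$, which the Koszul complex on a system of parameters for the Noetherian ring $H^*(X)$ supplies; your argument unpacks precisely this DGI1 machinery (cellularity of the source, the triangle identity reduction, and the role of proxy-smallness in propagating the base case through coproducts).
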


We need only  observe that $C^*(X)$ is proxy-regular in the sense of
\cite{DGI1}. Since $H^*(X)$ is Noetherian, the Koszul complex
associated to a system of parameters provides a proof.

\subsection{The stable Koszul model.}
If $R$ is a commutative ring and $I=(x_1, x_2, \ldots , x_r)$
is an ideal, then Grothendieck
defines the local cohomology of an $R$-module $N$  by the formula
$$H^*_I(R;N)=H^*((R\lra [\frac{1}{x_1}])\tensor_R
(R\lra [\frac{1}{x_2}])\tensor_R \cdots \tensor_R
(R\lra [\frac{1}{x_n}])\tensor_RN),  $$
and shows it calculates the right derived functors of $I$-power
torsion when $R$ is Noetherian. We write $H^*_I(R)=H^*_I(R;R)$ for brevity.

We now lift this to DGAs in the usual way.
If $x \in H^*(A)$, we write $\Gamma_xA=\fibre (A \lra A[1/x])$,
and if $I=(x_1,x_2, \ldots , x_n)$ is an ideal in $H^*(A)$, for
an $A$-module $M$ we write
$$\Gamma_I M=
\Gamma_{x_1}A\tensor_A \Gamma_{x_2}A \tensor_A
\cdots \tensor_A \Gamma_{x_n}A\tensor_A M. $$
It turns out that up to equivalence this depends only
on the ideal $I$, and indeed, only on the radical of $I$.
If $I=\frak{m}$ is the maximal ideal we abbreviate this
$\Gamma M=\Gamma_{\frak{m}}M$.

Note that $\Gamma M$ has a filtration from its construction, and
that we therefore have a spectral sequence for calculating its
homology.
\begin{lemma}
There is a spectral sequence
$$\lc^*(H^*(A);H^*(M))\Rightarrow H^*(\Gamma M).\qqed$$
\end{lemma}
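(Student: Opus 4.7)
The approach I would take is to exploit the tensor-product construction of $\Gamma M$ as the source of a natural filtration. Each factor $\Gamma_{x_i}A$ sits in the defining fibre sequence $\Gamma_{x_i}A \to A \to A[1/x_i]$, which endows it with a canonical two-step filtration. Tensoring these over $A$ (and with $M$) produces a multi-filtration on $\Gamma M$ indexed by subsets $S\subseteq\{1,\ldots,n\}$; grouping by the cardinality $|S|$ yields a finite filtration of length $n+1$.

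Next I would identify the $E_1$ page of the associated spectral sequence. Since localisation is exact and so commutes with homology, passing to homology in each tensor factor leaves precisely two surviving pieces, $H^*(A)$ and $H^*(A)[1/x_i]$. Thus the multi-degree $S$ summand of $E_1$ is
$$\bigotimes_{i\in S} H^*(A)[1/x_i]\otimes_{H^*(A)} H^*(M),$$
the tensor product taken over $H^*(A)$. The $d_1$ differentials, induced by the connecting maps of the defining fibre sequences, are exactly the alternating sums of localisation maps that form the stable Koszul complex of $(x_1,\ldots,x_n)$ with coefficients in $H^*(M)$. Passing to $E_2$ therefore gives the cohomology of that complex, which by the definition of $\lc^*$ recalled above is $\lc^*(H^*(A);H^*(M))$. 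As the filtration has only $n+1$ layers, strong convergence to $H^*(\Gamma M)$ is automatic.

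The main obstacle will be bookkeeping: the fibre of $A\to A[1/x_i]$ carries a desuspension relative to the Koszul summand $A[1/x_i]$ it ultimately contributes, and one has to check that the signs and shifts in the connecting homomorphisms assemble correctly into the standard Koszul differential at $E_1$ rather than something off by a boundary or a sign convention. A cleaner fallback, should the multi-index signs become unwieldy, is to induct on $n$: the case $n=1$ is just the long exact sequence of the fibre $\Gamma_{x_1}M\to M\to M[1/x_1]$, which reproduces the two-term local cohomology at $(x_1)$, and a Grothendieck-style composition of the successive fibre constructions extends this to general $n$. Either way, the underlying point is simply that the filtration spectral sequence of a tensor product of two-term complexes computes the homology of the totalisation from the homology of each factor, and here that totalisation is by construction the Grothendieck local cohomology complex.
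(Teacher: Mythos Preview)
Your proposal is correct and matches the paper's approach exactly: the paper gives no proof beyond the sentence preceding the lemma, which observes that $\Gamma M$ has a filtration from its construction as a tensor product of two-term complexes, whence the spectral sequence. Your argument simply fills in the standard details of that remark.
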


Finally, the relevance to us is that this gives another construction
of cellularization.

\begin{prop}
\cite[9.3]{DGI1}
The natural map
$$\Gamma M \lra M$$
is $\Q$-cellularization.\qqed
\end{prop}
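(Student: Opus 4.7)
The plan is to verify the two defining properties of $\Q$-cellularization for the natural map $\Gamma M \lra M$: first, that $\Gamma M$ lies in $\loc(\Q)$ within the derived category of $C^*(X)$-modules (so that $\Gamma M$ is $\Q$-cellular), and second, that applying $\Hom_{C^*(X)}(\Q,-)$ to the map yields a quasi-isomorphism.

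For the $\Q$-equivalence, I would first observe that for any $x \in \fm$, multiplication by $x$ on $\Q = H^*(A)/\fm$ is the zero map, hence nullhomotopic in the derived category of $A$-modules. Consequently, if $N$ is any $A$-module on which $x$ acts invertibly (so $N \simeq N[1/x]$), then $\Hom_A(\Q,N)$ is simultaneously annihilated by $x$ via its action on $\Q$ and a module on which $x$ acts invertibly via $N$, forcing $\Hom_A(\Q,N) \simeq 0$. Applied to $N = A[1/x]$, the defining fibre sequence $\Gamma_x A \lra A \lra A[1/x]$ is therefore a $\Q$-equivalence. Tensoring iteratively over $A$ with such fibre sequences for a system of parameters $x_1,\ldots,x_n$ for $\fm$, and then tensoring with $M$, propagates this to a $\Q$-equivalence $\Gamma M \lra M$.

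For the cellularity, I would express each $\Gamma_{x_i}A$ as a homotopy colimit over $s$ of finite Koszul complexes $K(x_i^s)$ on powers of $x_i$; tensoring yields $\Gamma A \simeq \hocolim_{\mathbf{s}} K(\mathbf{x}^{\mathbf{s}})$, and hence $\Gamma M \simeq \hocolim_{\mathbf{s}} K(\mathbf{x}^{\mathbf{s}}) \tensor_A M$. Since $\loc(\Q)$ is closed under homotopy colimits, it suffices to show that each finite Koszul complex $K(\mathbf{x}^{\mathbf{s}}) \tensor_A M$ is $\Q$-cellular. Since $H^*(A)$ is Noetherian and the $x_i$ generate $\fm$ up to radical, proxy-regularity applies (this is precisely the ingredient already invoked for the Morita model): $K(\mathbf{x}) \in \thick(\Q)$, and iterating the construction of Koszul complexes on powers keeps us inside $\thick(\Q)$. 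Tensoring with $M$ then leaves us inside $\loc(\Q)$.

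The main obstacle is this cellularity statement, which depends essentially on the Noetherian hypothesis to guarantee that finitely many parameters generate $\fm$ up to radical, so that finite Koszul complexes on these parameters already witness membership in $\thick(\Q)$. Without proxy-regularity, the colimit construction defining $\Gamma$ could a priori sit outside $\loc(\Q)$. Once proxy-regularity is available, the remainder of the argument is a formal bookkeeping of homotopy colimits and fibre sequences, parallel to the argument sketched after the Morita model above.
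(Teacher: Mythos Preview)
The paper does not actually prove this proposition: it is stated with a citation to \cite[9.3]{DGI1} and an immediate \qed. There is therefore no in-paper argument to compare against; your sketch is filling in what the paper deliberately imports from the reference.

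Your outline is essentially the standard proof from \cite{DGI1}. The $\Q$-equivalence step is fine as stated. For the cellularity step, your reduction to showing $K(\mathbf{x}^{\mathbf{s}})\tensor_A M \in \loc(\Q)$ is correct, and the proxy-regularity input $K(\mathbf{x})\in \thick(\Q)$ is exactly what the paper invokes (just after the Morita-model proposition). The one place you could be a touch more explicit is the final clause ``tensoring with $M$ then leaves us inside $\loc(\Q)$'': from $K(\mathbf{x}^{\mathbf{s}})\in\thick(\Q)$ you get $K(\mathbf{x}^{\mathbf{s}})\tensor_A M\in\thick(\Q\tensor_A M)$, and you should remark that $\Q\tensor_A M$ has its $A$-action factoring through $\Q$, hence (over the field $\Q$) splits as a sum of shifts of $\Q$ and lies in $\loc(\Q)$. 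With that sentence added, the argument is complete and matches the approach of the cited reference.
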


Now we specialize to the case $A=C^*(X)$ to obtain the required equivalence
from uniqueness of cellularization.
\begin{cor}
\label{cor:GammaisCellk}
There is a natural equivalence
$$\Gamma M \simeq \Hom_{C^*(X)}(\Q,M)\tensor_{C_*(\Omega X)}\Q .\qqed$$
\end{cor}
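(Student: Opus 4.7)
The plan is to exploit the uniqueness of $\Q$-cellular approximation, which was noted just after its definition: any two $\Q$-cellular objects mapping by $\Q$-equivalences to the same target are canonically equivalent. Both ingredients on the two sides of the claimed equivalence have already been identified as $\Q$-cellular approximations of $M$ in the two preceding propositions, so only a formal comparison remains.

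Concretely, I would proceed as follows. First, invoke the Morita model proposition: since $H^*(X)$ is Noetherian, the counit
\[
\eta_M \colon \Hom_{C^*(X)}(\Q,M)\tensor_{C_*(\Omega X)}\Q \lra M
\]
is a $\Q$-cellular approximation. Second, invoke the stable Koszul model proposition: the natural map $\Gamma M \lra M$ is also a $\Q$-cellular approximation. Third, by uniqueness of $\Q$-cellular approximation, there is an essentially unique equivalence $\Gamma M \simeq \Hom_{C^*(X)}(\Q,M)\tensor_{C_*(\Omega X)}\Q$ under $M$. This equivalence is natural in $M$ because both constructions are functorial in $M$ and both structure maps to $M$ are natural, so the comparison map produced by the universal property is natural as well.

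The only point that requires any care is the verification that both sides are genuinely $\Q$-cellular (not just $\Q$-equivalent to $M$), since otherwise the uniqueness statement would not apply. On the Morita side, the object $\Hom_{C^*(X)}(\Q,M)\tensor_{C_*(\Omega X)}\Q$ is by construction built from $\Q$ as a $C^*(X)$-module via the tensor product over $C_*(\Omega X)$ acting through its identification with $\Hom_{C^*(X)}(\Q,\Q)$. On the stable Koszul side, $\Gamma M = \Gamma_{x_1}C^*(X) \tensor_{C^*(X)}\cdots\tensor_{C^*(X)}\Gamma_{x_n}C^*(X)\tensor_{C^*(X)}M$ is manifestly built from $\Q$ because each $\Gamma_{x_i}C^*(X)$ is finitely built from $\Q$ using a system of parameters (as in the proxy-regularity argument indicated after the Morita proposition). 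Both propositions have already packaged these cellularity verifications, so no new work is needed.

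There is no genuine obstacle here: the result is a formal consequence of having two constructions, each independently proved to be $\Q$-cellular approximation, combined with the uniqueness of such approximations up to canonical equivalence. If any step deserves emphasis, it is simply the bookkeeping that the Noetherian hypothesis on $H^*(X)$ is in force, since it is needed in the Morita proposition (through proxy-regularity) to make $\eta_M$ a $\Q$-cellular approximation in the first place.
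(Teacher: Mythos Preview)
Your proposal is correct and matches the paper's approach exactly: the paper simply says ``we specialize to the case $A=C^*(X)$ to obtain the required equivalence from uniqueness of cellularization,'' and your argument spells this out. One small inaccuracy in your aside: $\Gamma_{x_i}C^*(X)$ is not \emph{finitely} built from $\Q$ (the stable Koszul complex involves a colimit), but since you correctly defer to the cited propositions for the cellularity verifications, this does not affect the argument.
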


\section{Regular rings and spaces.}
\label{sec:regular}
We shall show that the regular spaces are precisely the
spaces $KV$ where $V$ is even and finite dimensional.
This is straightforward once we have established definitions.

For all classical commutative algebra, we refer the reader to
\cite{Matsumura}.

\subsection{Definitions.}
In commutative algebra there are three styles for a definition of
a regular local ring: ideal theoretic, in terms of the {\bf g}rowth of
the Ext algebra and a {\bf h}omotopy invariant version.

\begin{defn}
(i) A local Noetherian ring $R$ is {\em regular} if the maximal ideal
is generated by a regular sequence.

(ii) A local Noetherian ring $R$ is {\em g-regular} if $\Ext_R^*(k,k)$ is
finite dimensional.

(iii) A local Noetherian ring $R$ is {\em h-regular} if every finitely generated
module is small in $\Der (R)$.
\end{defn}

It is not hard to see that g-regularity is equivalent to h-regularity or that
regularity implies g-regularity. Serre proved that g-regularity implies regularity, so the three conditions are equivalent.

It is not altogether clear what should play the role of finitely generated
modules in the more general context. We would like it to include all
small objects, and the object $\Q$, and we would like to know that
if $\Q$ is small then all objects in the class are small. For the purpose
of the present paper, we take
$$\fg := \{ M \st H^*(M) \mbox{ is a finitely generated $H^*(X)$-module} \},$$
and we will show that it has the properties we require.

\begin{defn}
(i)  A space $X$  is {\em s-regular} if there are fibrations
$$S^{n_1}\lra X_1 \lra X, S^{n_2}\lra X_2 \lra X_1,
\ldots , S^{n_d}\lra X_d \lra X_{d-1}$$
with $X_d\simeq *$.

(ii) A space $X$  is {\em g-regular} if $H_*(\Omega X)$ is finite
dimensional.

(iii) A space $X$ is {\em h-regular} if every object of $\fg$
 is small in $\Der (C^*(X))$.
\end{defn}

If $X$ is s-regular, we see $\Omega X_{d-1}\simeq S^{n_d}$,
and working back up the sequence of fibrations, we see that
$X$ is g-regular.
Since $\Q \in \fg$ it follows from Proposition \ref{prop:EMRS}
 that an h-regular space is
g-regular. We will establish the reverse implication by classifying
g-regular spaces.

\begin{remark} The use of the classification
is somewhat unsatisfactory, and suggests that
we should seek a choice of class $\fg$ that is appropriate even when
we do not have such a classification. One possibility is to consider
all $R$-modules $M$ which are small as $Q$-modules, for some map $Q\lra R$
of algebras from a regular ring $Q$ so that $R$ is small as a $Q$-module.
\end{remark}

\subsection{Classification of regular spaces.}

In the rational context we can give a complete classification of
regular spaces.

\begin{thm}
\label{thm:hregisKV}
A simply connected rational space $X$ of finite type is g-regular if and only
if $\pi_*(X)$ is even and finite dimensional. It is therefore
equivalent to the Eilenberg-MacLane space $K(\pi_*(X))$, and has polynomial
cohomology $\symm (\pi_*(X))$.
\end{thm}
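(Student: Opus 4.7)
The plan is to run the two implications separately, using Milnor--Moore plus PBW for the forward direction and a Postnikov tower argument for the converse.

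For the forward direction, assume $H_*(\Omega X)$ is finite dimensional. By the Milnor--Moore theorem recalled above, $H_*(\Omega X)=U(\pi_*(\Omega X))$, and by the Poincar\'e--Birkhoff--Witt theorem there is a grading isomorphism $\mathrm{Gr}\,U(\pi_*(\Omega X))\cong \Lambda(\pi_*(\Omega X))$, a symmetric algebra on $\pi_{ev}(\Omega X)$ tensored with an exterior algebra on $\pi_{od}(\Omega X)$. Finite dimensionality of the associated graded then forces $\pi_{ev}(\Omega X)=0$ and $\pi_{od}(\Omega X)$ finite dimensional. Translating under the loops shift $\pi_i(\Omega X)=\pi_{i+1}(X)$, this says exactly that $\pi_*(X)$ is concentrated in even degrees and finite dimensional.

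For the converse, suppose $\pi_*(X)$ is even and finite dimensional. I will show by induction on the rational Postnikov tower $X\to \cdots \to X_n\to X_{n-1}\to\cdots$ that each finite stage $X_n$ is equivalent to a product of even Eilenberg--MacLane spaces, hence has polynomial cohomology in even degrees only. The base case is trivial since $X$ is simply connected. For the inductive step, all non-trivial homotopy groups sit in even degrees, so the only non-trivial stages come from $K(\pi_{2n}(X),2n)$ fibrations classified by a $k$-invariant in $H^{2n+1}(X_{2n-1};\pi_{2n}(X))$; but $H^*(X_{2n-1})$ is concentrated in even codegrees by the inductive hypothesis, so this $k$-invariant vanishes and $X_{2n}\simeq X_{2n-1}\times K(\pi_{2n}(X),2n)$. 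Finite dimensionality of $\pi_*(X)$ ensures the tower stabilises after finitely many steps, giving $X\simeq \prod_n K(\pi_{2n}(X),2n)=K(\pi_*(X))$. The cohomology is then a tensor product of polynomial algebras, i.e.\ $\mathrm{Symm}(\pi_*(X))$ on generators in the appropriate even degrees, and $\Omega X$ is a finite product of rational odd-dimensional Eilenberg--MacLane spaces, whose rational homology is an exterior algebra on finitely many odd generators, confirming $H_*(\Omega X)$ is finite dimensional a posteriori.

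The genuinely non-trivial input is Milnor--Moore combined with PBW in the forward direction; everything else is bookkeeping with degrees and Postnikov $k$-invariants. The one potentially fiddly step is checking that the $k$-invariants vanish at every stage, which is why the inductive hypothesis needs to carry not just the product decomposition of $X_{n-1}$ but also the fact that $H^{odd}(X_{n-1};\Q)=0$; both are automatic once one notes that products of polynomial algebras on even generators have no odd cohomology.
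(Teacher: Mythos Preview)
Your proof is correct. For the forward direction you use Milnor--Moore and PBW explicitly, which is exactly what underlies the paper's one-line argument that $\Omega X$ is rationally a product of Eilenberg--MacLane spaces and only the odd factors have finite homology. For the converse and the identification $X\simeq K(\pi_*(X))$, you run a Postnikov tower argument showing the $k$-invariants vanish for parity reasons; the paper leaves this step implicit, presumably because in Sullivan model language it is immediate: the minimal model $(\Lambda V,d)$ has $V$ concentrated in even codegrees, so $dV$ would have to land in odd codegrees of $\Lambda V$, which are zero, forcing $d=0$ and hence $X\simeq KV$. Your Postnikov argument is the space-level translation of exactly this observation, so the two approaches are equivalent in content.
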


\begin{proof}
Since $\Omega X$ is a product of Eilenberg-MacLane spaces,
we need only remark that odd Eilenberg-MacLane spaces are spheres,
whereas even Eilenberg-MacLane spaces are infinite dimensional.
\end{proof}

\begin{prop}
\label{prop:fgregissmall}
If $X$ is g-regular and $H^*(M)$ is finitely generated over $H^*(X)$
then $M$ is small.
\end{prop}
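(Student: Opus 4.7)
The plan is to reduce to the case of a polynomial ring via Theorem \ref{thm:hregisKV}, and then use Hilbert's syzygy theorem together with a cellular lifting argument.

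First, I invoke Theorem \ref{thm:hregisKV}: since $X$ is g-regular, $X\simeq KV$ for some even, finite-dimensional graded vector space $V$, and $H^*(X)\cong \symm(V)$ is a polynomial ring $P$ on finitely many even-degree generators. Moreover, polynomial rings on even generators are intrinsically formal (as noted in the Noether normalization subsection), so $C^*(X)\simeq P$ as CDGAs. Thus $\Der(C^*(X))\simeq \Der(P)$, and it suffices to show that any DG $P$-module $M$ with $H^*(M)$ finitely generated over $P$ lies in $\thick(P)$.

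Next, because $P$ is a polynomial ring over the field $\Q$ on finitely many generators, Hilbert's syzygy theorem tells us that every finitely generated graded $P$-module has a finite resolution of length at most $n=\dimQ V$ by finitely generated free (graded) $P$-modules. I would like to lift such a resolution of $H^*(M)$ to $M$ itself. The standard procedure is to build a semifree (cellular) approximation $F\lra M$: choose finitely many generators for $H^*(M)$ and realize them by a map $F_0\lra M$ where $F_0$ is a finite sum of suspensions of $P$; replace $M$ by the fibre of this map and iterate. At each stage the homology of the fibre is still finitely generated (by the Noetherian property of $P$), and represents the kernel of the corresponding surjection on homology.

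The key point — and the main obstacle to make precise — is that the process terminates after at most $n+1$ steps. This is exactly Hilbert's syzygy theorem: the $(n+1)$-st kernel is projective and finitely generated, hence free of finite rank. Thus $M$ is obtained as an iterated extension of finitely many suspensions of $P=C^*(X)$, so $M\in\thick(C^*(X))$ and therefore $M$ is small. In summary, g-regularity forces $C^*(X)$ to be a polynomial ring, and finite projective dimension over a polynomial ring upgrades "$H^*(M)$ finitely generated" to "$M$ finitely built from $C^*(X)$" in the derived category.
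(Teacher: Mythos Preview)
Your proof is correct and follows essentially the same route as the paper: both arguments use that $H^*(X)$ is polynomial on even generators (via Theorem~\ref{thm:hregisKV}), take a finite free resolution of $H^*(M)$ by Hilbert's syzygy theorem, and realize it by a sequence of cofibre triangles to exhibit $M$ as finitely built from $C^*(X)$. The only cosmetic difference is that you pass explicitly through the formality $C^*(X)\simeq P$, whereas the paper works directly with $C^*(X)$-modules throughout.
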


\begin{proof}
Suppose $H^*(M)$ is a finitely generated $H^*(X)$-module. Since
$H^*(X)$ is a polynomial ring on even degree generators, there is a
finite resolution by finitely generated free modules
$$0\lra P_r\stackrel{d_r}\lra P_{r-1}\stackrel{d_{r-1}}\lra \cdots
\stackrel{d_2}\lra P_1\stackrel{d_1}\lra P_0 \stackrel{d_0}\lra H^*(M)\lra 0.$$
We proceed to realize this in the usual way. To start with we realize the
free modules $P_i=(H^*(X))^{\oplus n}$ by the $C^*(X)$-modules
$\bbP_i=(C^*(X))^{\oplus n}$. Now take $M=M_0$ and realize the algebraic
resolution by constructing a diagram
$$\diagram
\bbP_0 \dto & \Sigma \bbP_1\dto  &&  \Sigma^r\bbP_r\dto& &\\
M_0 \rto & M_1\rto  &\cdots \rto & M_r\rto &M_{r+1}\simeq 0\\
 \enddiagram$$
in which the sequences $\Sigma^i \bbP_i \lra M_i \lra M_{i+1}$ are
cofibre sequences,
$H^*(\Sigma^{-i}M_i)=\ker (d_{i-1})$ and $\Sigma^i\bbP_i \lra M_i$ realizes
the map in the algebraic resolution. Reversing the process, we
see that $M_r,M_{r-1}, \ldots, M_1 $ and $M_0=M$ are finitely built from
$C^*(X)$ and therefore small as required.
\end{proof}

This establishes the equivalence of the two definitions of regularity.
\begin{cor}
A space is g-regular if and only if it is h-regular.\qqed
\end{cor}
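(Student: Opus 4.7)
The plan is to assemble this corollary from two implications that have already been established in the surrounding material; essentially no new argument is required, so this is best presented as a one--paragraph synthesis rather than a from--scratch proof.

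For the direction h-regular $\Rightarrow$ g-regular, I would simply recall the remark made immediately before Theorem \ref{thm:hregisKV}. The module $\Q$ lies in $\fg$ (it is the cyclic $H^*(X)$-module generated by the unit), so h-regularity forces $\Q$ to be small in $\Der(C^*(X))$, i.e.\ $\Q$ is finitely built from $C^*(X)$. Applying $\Hom_{C^*(X)}(-,\Q)$ and using Proposition \ref{prop:EMRS}, the finite-dimensionality of $\Hom_{C^*(X)}(C^*(X),\Q)=\Q$ propagates through the thick subcategory generated by $C^*(X)$ (by a standard five-lemma argument on cofibre sequences, together with closure under retracts) to give finite-dimensionality of $C_*(\Omega X)=\Hom_{C^*(X)}(\Q,\Q)$.

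For the converse g-regular $\Rightarrow$ h-regular, I would observe that this is literally the content of Proposition \ref{prop:fgregissmall}: by Theorem \ref{thm:hregisKV}, a g-regular $X$ has $H^*(X)$ a polynomial ring on even generators, so every $M\in\fg$ admits a finite resolution by finitely generated free $H^*(X)$-modules, which Proposition \ref{prop:fgregissmall} realizes in $\Der(C^*(X))$ to exhibit $M$ as finitely built from $C^*(X)$ and hence small. Since the definition of h-regularity asks for exactly this smallness property of every $M\in\fg$, the conclusion is immediate.

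There is no genuine obstacle; the only point that requires a sentence of justification is the thick-subcategory propagation in the first direction, and this is entirely routine. Accordingly, the corollary should be presented as a one-line consequence of Propositions \ref{prop:EMRS} and \ref{prop:fgregissmall} together with Theorem \ref{thm:hregisKV}.
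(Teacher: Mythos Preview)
Your proposal is correct and matches the paper's own approach exactly: the paper has already noted (just before Theorem \ref{thm:hregisKV}) that $\Q\in\fg$ together with Proposition \ref{prop:EMRS} gives h-regular $\Rightarrow$ g-regular, and Proposition \ref{prop:fgregissmall} is precisely the converse, so the corollary is stated with no further proof. Your expansion of the thick-subcategory propagation for the first implication is accurate and is the argument the paper's one-line remark is gesturing at.
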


\subsection{Some small objects.}
It is useful to identify some modules that are small rather generally.

\begin{lemma}
\label{lem:finfibissmall}
If $f: Y \lra X$ is a map with homotopy fibre $F(f)$ so that
$H_*(F(f))$ is finite
dimensional, then $C^*(Y)$ is small in  $\Der (C^*(X))$.
\end{lemma}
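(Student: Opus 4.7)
The plan is to convert smallness of $C^*(Y)$ over $C^*(X)$ into a statement about the fibre $F(f)$ via the Rothenberg--Steenrod equivalence, and then exploit finite-dimensionality of $H_*(F(f))$ by a Postnikov-style induction.

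First, I would apply the Rothenberg--Steenrod theorem (Proposition~\ref{prop:EMRS}) to the fibration $F(f) \to Y \to X$ to obtain
$$C^*(Y) \simeq \Hom_{C_*(\Omega X)}(\Q, C^*(F(f)))$$
in $\Der(C^*(X))$, where $C^*(F(f))$ carries the $C_*(\Omega X)$-module structure induced by the $\Omega X$-action on the fibre.

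Second, I would show that $C^*(F(f))$ lies in $\thick_{C_*(\Omega X)}(\Q)$. Since $X$ is simply connected, $C_*(\Omega X)$ is connective with $H_0=\Q$, so $\Der(C_*(\Omega X))$ carries the standard $t$-structure. Induct on $\dim_{\Q} H_*(F(f))$: choose the extreme (top) homological degree $n$ in which the homology of $C^*(F(f))$ is nonzero, and consider the truncation triangle
$$\tau_{<n} C^*(F(f)) \lra C^*(F(f)) \lra Q,$$
whose right-hand term $Q$ has homology concentrated in a single degree $n$. Because elements of $C_*(\Omega X)$ of strictly positive homological degree must act trivially on a module whose homology sits only in the top degree, $Q$ is quasi-isomorphic (as a $C_*(\Omega X)$-module) to a finite direct sum of shifts of the trivial module $\Q$, while $\tau_{<n} C^*(F(f))$ has strictly smaller total homology. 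The induction, together with closure of thick subcategories under triangles, finishes this step.

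Third, I would apply the exact functor $\Hom_{C_*(\Omega X)}(\Q, -) : \Der(C_*(\Omega X)) \to \Der(C^*(X))$. By Proposition~\ref{prop:EMRS} it sends $\Q$ to $C^*(X)$, and being exact it preserves thick subcategories, so it carries $\thick_{C_*(\Omega X)}(\Q)$ into $\thick_{C^*(X)}(C^*(X))$. Combined with Step~1, this places $C^*(Y)$ in $\thick_{C^*(X)}(C^*(X))$, which is exactly the assertion that $C^*(Y)$ is small. The only technical point worth checking is that the truncation triangle and the splitting of $Q$ as a sum of trivial modules really take place in $\Der(C_*(\Omega X))$ rather than merely in $\Der(\Q)$; this follows from the connectivity of $C_*(\Omega X)$, so there is no genuine obstacle, just bookkeeping.
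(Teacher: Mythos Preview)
Your proposal is correct and follows essentially the same approach as the paper: both express $C^*(Y)$ as $\Hom_{C_*(\Omega X)}(\Q,-)$ applied to the (co)chains on the fibre, observe that a $C_*(\Omega X)$-module with finite-dimensional homology lies in $\thick_{C_*(\Omega X)}(\Q)$, and then push this through the exact functor $\Hom_{C_*(\Omega X)}(\Q,-)$ to land in $\thick_{C^*(X)}(C^*(X))$. The paper simply asserts the thick-subcategory step (``By hypothesis, $\Q$ finitely builds $C_*(F(f))$''), whereas you spell it out via the $t$-structure truncation; your extra detail is sound and not a genuine departure.
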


\begin{proof}
By hypothesis,  $\Q$ finitely builds $ C_*(F(f))$ as a
$C_*(\Omega X)$-module. Applying
$\Hom_{C_*(\Omega X)}( \Q, \cdot )$, we deduce from the Eilenberg-Moore
spectral sequence that $C^*(X)$ finitely builds $C^*(Y)$. In symbols,
$$\Q \finbuilds_{C_*(\Omega X)}  C_*(F(f))$$
and hence
$$C^*(X) \simeq  \Hom_{C_*(\Omega X)}(\Q,\Q) \finbuilds
\Hom_{C_*(\Omega X)}(\Q, C_*(F(f)) \simeq C^*(Y) .$$
\end{proof}

\begin{lemma}
\label{lem:regimpliesfgreg}
If $X$ is g-regular, and $Y \lra X$ is a map with $C_*(F(f))$ finitely
built from $C_*(\Omega X)$ then $C^*(Y)$ is small.
\end{lemma}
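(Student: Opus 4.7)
The plan is to apply the Rothenberg-Steenrod theorem to transport the question across the Morita adjunction into a builds-relation over the loop algebra, and then to exploit g-regularity of $X$ to show that in this context $C_*(\Omega X)$ is itself finitely built from the trivial module $\Q$.

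By Rothenberg-Steenrod (Proposition \ref{prop:EMRS}) applied to the fibration $F(f)\to Y\to X$,
$$C^*(X) \simeq \Hom_{C_*(\Omega X)}(\Q,\Q) \quad\text{and}\quad C^*(Y)\simeq \Hom_{C_*(\Omega X)}(\Q,C_*(F(f))).$$
Since $\Hom_{C_*(\Omega X)}(\Q,-)$ is triangulated it preserves finite builds, so to conclude $C^*(X)\finbuilds C^*(Y)$ it suffices to show $\Q\finbuilds_{C_*(\Omega X)} C_*(F(f))$. Combined with the hypothesis $C_*(\Omega X)\finbuilds C_*(F(f))$, the problem reduces to the single claim
$$\Q\finbuilds_{C_*(\Omega X)} C_*(\Omega X).$$

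For this, invoke g-regularity. By Theorem \ref{thm:hregisKV}, $X\simeq KV$ with $V$ even and finite-dimensional, so $H^*(X)=\symm(V)$ is a polynomial ring on even generators; such rings are intrinsically formal, so $C^*(X)\simeq H^*(X)$, and by Koszul duality (equivalently, by Milnor-Moore applied to the purely odd abelian Lie algebra $\pi_*(\Omega X)$), $C_*(\Omega X)\simeq\Hom_{H^*(X)}(\Q,\Q)$ is quasi-isomorphic as a DGA to a finite-dimensional exterior algebra $E$ on odd generators with zero differential. Writing $I$ for the augmentation ideal of $E$, the formal model gives the strict filtration by DG-submodules
$$E\supset I\supset I^2\supset\cdots\supset I^{r+1}=0,$$
where $r=\dim V$. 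Each $I^k/I^{k+1}$ is annihilated by $I$, so its $E$-action factors through $E/I=\Q$, which makes it a finite direct sum of shifts of the trivial module $\Q$. Induction through the cofibre sequences $I^{k+1}\to I^k\to I^k/I^{k+1}$ (starting from the trivial case $I^{r+1}=0$) then gives $\Q\finbuilds E$, as required.

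Combining, $\Q\finbuilds C_*(F(f))$, and applying $\Hom_{C_*(\Omega X)}(\Q,-)$ yields $C^*(X)\finbuilds C^*(Y)$, so $C^*(Y)$ is small. The only mildly delicate point is securing a strictly nilpotent, zero-differential model for $C_*(\Omega X)$ so that the powers of the augmentation ideal form an honest filtration terminating in $0$; once this reduction is performed via intrinsic formality of the polynomial ring $H^*(X)$, the filtration argument is standard.
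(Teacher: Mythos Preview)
Your proof is correct and follows essentially the same route as the paper: reduce via Rothenberg--Steenrod to showing $\Q\finbuilds_{C_*(\Omega X)} C_*(F(f))$, establish $\Q\finbuilds_{C_*(\Omega X)} C_*(\Omega X)$ from g-regularity, and combine with the hypothesis. The only difference is that the paper dispatches the key step in one line---$H_*(\Omega X)$ finite dimensional immediately gives $\Q\finbuilds C_*(\Omega X)$ (any module with finite-dimensional homology over a connected augmented DGA is finitely built from $\Q$, by truncation)---whereas you take the scenic route through formality of $C^*(X)$ and the augmentation filtration of the exterior algebra; your argument is correct but the formality is not needed.
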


\begin{proof}
Suppose $X$ is
g-regular, so that  $H_*(\Omega X)$ is finite dimensional. Thus $\Q$ finitely
builds $C_*(\Omega X)$. It follows that
if  $C_*(\Omega X)$ finitely builds $C_*(F(f))$ then $\Q$ finitely builds
$C_*(F(f))$ and we may apply the argument of Lemma \ref{lem:finfibissmall}.
\end{proof}






\section{The centre of a triangulated category.}
\label{sec:centre}
It will be useful to recall certain constructions before turning to
complete intersections.

\subsection{Universal Koszul complexes.}
To start with we suppose given a triangulated category $\cT$. The
{\em centre} $Z\cT$ of $\cT$ is defined to be the graded ring of
graded endomorphisms of the identity functor.

Given $\chi \in Z\cT$ of degree $a$,  for any object $X$, we may form
the mapping cone $X/\chi$ of $\chi : \Sigma^a X \lra X$. This is
well defined up to non-unique equivalence. Indeed, given a map
$f: X \lra Y$, the axioms of a triangulated category give
 a map $f:X/\chi \lra Y/\chi$ consistent with
the defining triangles, but this is not usually unique or compatible
with composition.

Now given a sequence of elements $\chi_1 , \chi_2, \ldots, \chi_n$
we may iterate this construction, and form
$$K(X; \chivec) := X/\chi_1/\chi_2/\cdots /\chi_n, $$
which we refer to as the universal Koszul complex of the sequence.
Once again, up to equivalence $K(X;\chivec)$ depends only on the sequence, and
is independent of the order of the
elements $\chi_i$.

\subsection{Bimodules and the centre.}
 Bimodules provide a useful source of elements of
$Z\Der (R)$. Indeed, if $R$ is a flat $l$-algebra, and if $X \lra Y$ is a
map of  $R$-bimodules
over $l$ (which is to say, of modules over $R^e=R \tensor_l R$), then
for any $R$-module $M$ we obtain a map
$$X \tensor_R M \lra Y \tensor_R M$$
of $R$-modules, natural in $M$.

It is sometimes convenient to package this in terms of the Hochschild
cohomology ring
$$HH^*(R|l)=\Ext_{R^e}^*(R,R). $$
If $l=\Z$, it is usual to omit it from the notation.
Now a codegree $d$ element of this cohomology ring can be viewed as a map
$R\lra \Sigma^d R$ in the category of $(R,R)$-bimodules, so that
taking $X=Y=R$ above, we obtain a ring homomorphism
$$HH^*(R|l) \lra Z\Der (R). $$
If $R$ is an $l$-algebra which is not flat, $R^e=R \tensor_l R$ is
taken in the derived sense, and similarly for $HH^*(R|l)$.

Given maps
$l\lra Q \lra R$, we obtain a map $R\tensor_lR \lra R\tensor_{Q}R$
and hence a ring map $HH^*(R|Q)\lra HH^*(R|l)$. In particular, we have
maps
$$R=HH^*(R|R) \lra HH^*(R|Q) \lra HH^*(R|\Z)=HH^*(R).$$

If $R=C^*(X)$, we may always take $l=\Q=C^*(pt)$, so that a bimodule is
a module over $R^e=C^*(X \times X)$, but it is usually more appropriate to
work over $Q=C^*(K)$ where we have a fibration $X \lra K$. In that case
a bimodule over $Q$ is a module over $R^e=C^*(X\times_K X)$.

\subsection{Hochschild cohomology transcended.}
\label{subsec:transcended}

It seems natural to relax the role of  Hochschild cohomology.
For us it is really just a tool for building bimodules from $R$.
We will suppose given a map $Q\lra R$ so that $Q$ is regular
and $R$ is small over $Q$. This ensures that $\fg $ as
defined in Section \ref{sec:regular} coincides with the $R$-modules
which are small over $Q$.

Now, if $X$ is any $R$-bimodule finitely built from $R$, we
can apply $\tensor_R M$ to deduce $X\tensor_R M$ is finitely
built from $M=R\tensor_RM$:
$$R\finbuilds_{R^e} X \mbox{ implies }
 M=R\tensor_R M \finbuilds_R X \tensor_R M.$$
The important case for us is when $X$ is a small $R^e$-module.
\begin{lemma}
\label{lem:bimodisallmod}
If $X$ is a small $R^e$-module and $M\in \fg$, then
$X\tensor_R M$ is a small $R$-module.
\end{lemma}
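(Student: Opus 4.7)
The plan is to push smallness through $-\tensor_R M$ in two stages: first use the right $R$-action on $X$ to tensor with $M$ and land in $\Der(R)$ via the remaining left $R$-action, and then invoke the regularity of $Q$ together with the assumption $M \in \fg$ to finish.

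More concretely, since $X$ is small over $R^e$, we have $R^e\finbuilds_{R^e} X$. The functor $(-)\tensor_R M\colon \Der(R^e)\to \Der(R)$, obtained by using one of the two $R$-actions on an $R^e$-module to form the tensor product with $M$ and keeping the other action on the result, is triangulated and preserves coproducts, and hence carries thick subcategories into thick subcategories. Applying it gives
$$R^e\tensor_R M \finbuilds_R X\tensor_R M.$$
But $R^e\tensor_R M=(R\tensor_Q R)\tensor_R M\simeq R\tensor_Q M$ as left $R$-modules, so the problem reduces to showing that $R\tensor_Q M$ is small in $\Der(R)$.

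For this last step I would use the identification (recorded just before the lemma) of $\fg$ with the class of $R$-modules that are small as $Q$-modules: since $M\in\fg$, we have $Q\finbuilds_Q M$, and applying the triangulated coproduct-preserving functor $R\tensor_Q(-)\colon \Der(Q)\to \Der(R)$ yields $R\finbuilds_R R\tensor_Q M$, so $R\tensor_Q M$ is small over $R$. Stringing the two building relations together gives $R\finbuilds_R X\tensor_R M$, as required. The only delicate point is the bookkeeping of left versus right $R$-actions on the bimodule $X$ when defining the functor $(-)\tensor_R M$; once that is straight, the proof is formal and there is no substantive obstacle.
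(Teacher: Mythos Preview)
Your proof is correct and follows essentially the same approach as the paper's: reduce to the case $X=R^e$ via the thick subcategory generated by $R^e$, compute $R^e\tensor_R M\simeq R\tensor_Q M$, and then use that $M$ is small over $Q$ (the paper cites Proposition~\ref{prop:fgregissmall} rather than the remark you invoke, but these amount to the same thing). One tiny quibble: preserving coproducts is irrelevant for the thick-subcategory argument, since thick subcategories involve only finite constructions; being triangulated suffices.
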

\begin{proof}
It suffices to consider the case $X=R^e$. We then have
$$X\tensor_R M=R\tensor_QR \tensor_R M=R\tensor_Q M.$$
By Proposition \ref{prop:fgregissmall}, $M$ is small as a $Q$-module, so
$$R=R\tensor_QQ\finbuilds R\tensor_QM$$
as required.
\end{proof}

This comes close to saying that if $R$ is virtually small as an
$R^e$-module then every $M\in \fg$ is virtually small as an $R$-module.
The only obstacle is the need to show $X\tensor_RM$ is non-trivial; in the
context we need it, the non-zero degree of the maps constructing $X$
will make it clear.

\section{Complete intersection rings and spaces.}
\label{sec:ci}

We will give definitions of complete intersections as in the regular case.
For commutative Noetherian rings these were shown to be equivalent in
\cite{BGzci}. We will show they are equivalent for rational spaces.

\subsection{The definition}

In commutative algebra there are three styles for a definition of
a complete intersection ring: ideal theoretic, in terms of the growth of
the Ext algebra and a derived version.

\begin{defn}
(i) A local Noetherian ring $R$ is a {\em complete intersection (ci)}
ring if $R=Q/(f_1,f_2, \ldots , f_c)$ for
some regular ring $Q$ and some regular sequence $f_1, f_2, \ldots , f_c$.
The minimum such $c$ (over all $Q$ and regular sequences) is called
the {\em codimension} of $R$.

(ii) A local Noetherian ring $R$ is  {\em gci}
if $\Ext_R^*(k,k)$ has polynomial growth. The  {\em g-codimension}
of $R$ is one more than the degree of the growth.

(iii) A local Noetherian ring $R$ is  {\em zci} \cite{BGzci} if
there are elements $z_1,z_2,\ldots z_c\in Z\Der (R)$ of non-zero degree so that
$M/z_1/z_2/ \cdots /z_c$ is small for all finitely generated modules $M$.
The minimum such $c$ is called the  {\em z-codimension} of $R$.

 {
The zci condition implies that every finitely generated module finitely builds a
small complex in a prescribed manner using elements in $Z\Der (R)$. We can relax
this by demanding only that each step in the building of the small
complex is the cone of an \emph{endomorphism} of the previous step. This is
the essence of the next definition.

(iv) A local Noetherian ring $R$ is  {\em eci} if
there is a regular ring $Q$, a map $Q\lra R$ and homotopy cofibration sequences
of $R^e$-modules, where $R^e=R\tensor_QR$,
\[ R=M_0 \xrightarrow{g_1} \Sigma^{n_1} M_0 \to M_1,\ \ldots \ ,\ M_{c-1}
\xrightarrow{g_c} \Sigma^{n_c} M_{c-1} \to M_c\]
such that $M_c$ is small as an $R^e$-module and the degree of each $g_i$ is
not zero.
}

Two variations are also useful.

(v) A local Noetherian ring $R$ is said to be  {\em bci}
if there is a regular ring $Q$ and map $Q\lra R$ so that
$R$ is virtually small as an $R^e$-module, where $R^e=R\tensor_QR$.

(vi) If $R$ is a commutative ring or CDGA, it is said to be a
{\em quasi-complete intersection (qci)} \cite{DGI2}
if every finitely generated object  is virtually small.
\end{defn}

\begin{thm} \cite{BGzci}
\label{thm:BGzciTheorem}
For a local Noetherian ring the conditions ci, gci and zci are all
equivalent, and the corresponding codimensions are equal.
 These conditions imply the  {eci}, bci and qci conditions.
\end{thm}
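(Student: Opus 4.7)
The plan is to establish the cycle of implications ci $\Rightarrow$ zci $\Rightarrow$ gci $\Rightarrow$ ci, with the three codimensions all coinciding at the same integer $c$, and to read off eci, bci, and qci as easy consequences along the way. The equivalence gci $\Leftrightarrow$ ci is Gulliksen's classical theorem, so the substantive work is the equivalence with zci and the derived-category implications.

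For ci $\Rightarrow$ eci, bci, zci: fix a presentation $R = Q/(f_1, \ldots, f_c)$ with $Q$ regular and $f_1, \ldots, f_c$ a $Q$-regular sequence. The first key construction, via deformation theory (an Eisenbud-style operator), produces classes $\chi_i \in HH^*(R|Q) = \Ext^*_{R^e}(R,R)$, where $R^e = R\otimes_Q R$. The next step is to verify that the iterated cone
\[ W := R/\chi_1/\chi_2/\cdots/\chi_c, \]
formed in the category of $R^e$-modules, is small. Heuristically, killing the $\chi_i$ undoes the deformation from $Q$ to $R$, reducing $R^e$ to a Koszul-type complex already small over $Q$, after which $R$ is small over itself. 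Reading off the defining cofibration sequences for $W$ yields the eci condition, and the existence of such a $W \in \thick_{R^e}(R)$ gives bci. Pushing the $\chi_i$ forward through the ring homomorphism $HH^*(R|Q) \to Z\Der(R)$ from Section \ref{sec:centre} produces central elements $z_1, \ldots, z_c$ of non-zero degree, and for any $M \in \fg$ we have
\[ M/z_1/z_2/\cdots/z_c \simeq W \otimes_R M. \]
Lemma \ref{lem:bimodisallmod} makes this small as an $R$-module, giving zci with z-codimension $c$. Since the constructing maps have non-zero degree, $W\otimes_R M$ is also non-trivial, witnessing that every $M \in \fg$ is virtually small and hence qci.

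For zci $\Rightarrow$ gci: specialize to $M = k$, so $k/z_1/\cdots/z_c$ is small in $\Der(R)$ and $\Ext^*_R(k/z_1/\cdots/z_c, k)$ is finite dimensional. The iterated cone description produces a multiplicative spectral sequence whose abutment involves this finite-dimensional target and whose $E_2$-term presents $\Ext^*_R(k,k)$ modulo the action of $z_1, \ldots, z_c$. Running it in reverse shows that $\Ext^*_R(k,k)$ is finitely generated over the polynomial subring $\Q[z_1, \ldots, z_c]$, yielding polynomial growth of degree $c-1$ and matching g-codimension to $c$.

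The main obstacle is the construction of the Hochschild classes $\chi_i$ from the regular sequence $f_1, \ldots, f_c$ and the verification that $W$ is small in the bimodule category. This is essentially a quantitative form of Koszul duality between the regular sequence cutting out $R$ inside $Q$ and the cohomological operations acting on $R$-modules, and the careful bookkeeping of $R^e$-resolutions needed to establish it rigorously is the technical heart of \cite{BGzci}. By contrast, the descent from $R^e$-modules to $R$-modules through Lemma \ref{lem:bimodisallmod}, and the extraction of polynomial growth from smallness of the Koszul cone via a standard spectral sequence, are formal.
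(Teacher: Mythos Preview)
This theorem is not proved in the present paper; it is quoted from \cite{BGzci}, and the paper offers only a short paragraph of discussion after the statement. That discussion attributes ci $\Rightarrow$ zci to a suitable form of Gulliksen's resolution construction and ci $\Leftrightarrow$ gci to Gulliksen's characterisation theorem \cite{Gulliksen}.

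Your outline is broadly consistent with that sketch and with what \cite{BGzci} does. Two remarks on emphasis. First, your route to ci $\Rightarrow$ zci via Eisenbud-style operators realised as Hochschild classes is the same construction the paper alludes to under the name ``a suitable construction of [Gulliksen's] resolution,'' just packaged in bimodule language; there is no real divergence here. Second, for zci $\Rightarrow$ gci the paper (in Section \ref{sec:zciisgci}, for the topological analogue) uses only the elementary estimate of Lemma \ref{lem:growthintriangles}: each cone by a non-zero-degree self-map raises growth by at most one, so starting from the small object $k/z_1/\cdots/z_c$ and unwinding $c$ cones bounds the growth of $\Ext^*_R(k,k)$ by a polynomial of degree $c-1$. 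Your spectral-sequence argument that $\Ext^*_R(k,k)$ is module-finite over a polynomial ring on the $z_i$ is stronger than needed and is not the route taken here.

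One caution: you appeal to Lemma \ref{lem:bimodisallmod}, which in this paper is formulated in the DGA/topological setting with $\fg$ defined via $H^*(X)$. The same argument goes through verbatim for commutative local Noetherian rings (finitely generated modules over a regular local ring have finite projective dimension, hence are small), but strictly speaking that version belongs to \cite{BGzci} rather than to this paper.
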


It is a result of Gulliksen that if $R$ is ci of codimension
$c$, one may construct a resolution of any finitely generated module growing
like a polynomial of degree $c-1$. A suitable construction of this
resolution shows that  $R$ is zci. Considering the module $k$
shows that the ring $\Ext_R^*(k,k)$ has polynomial
{\bf g}rowth. Perhaps the most striking result about
ci rings is the theorem of Gulliksen
\cite{Gulliksen} which states that this characterises ci rings so that the
ci and gci conditions are equivalent for local rings.

\begin{remark}
In commutative algebra, Avramov \cite{AvramovAQ}
proved Quillen's conjectured characterization
of complete intersections by the fact that the Andr\'e-Quillen cohomology
is bounded. When $k$ is of characteristic 0, the DG Andr\'e-Quillen cohomology
of $C^*(X)$ gives the dual homotopy groups of $X$, so the counterpart of
 Avramov's characterization is the gci condition.

On the other hand in positive characteristic, results of Mandell
\cite{Mandell}
show that the topological Andr\'e-Quillen cohomology of $C^*(X)$
vanishes quite
generally, so this does not give an appropriate counterpart of the ci
condition.
\end{remark}

\subsection{Definitions for spaces.}
Adapting the above definitions for spaces is straightforward.

\begin{defn}
(i) A space $X$ is {\em spherically ci (sci)} if it is formed from a regular space $KV$
using a finite number of spherical fibrations. More precisely, we require that
there is a regular space $X_0=KV$ with $V$ even and finite dimensional, and
fibrations
$$S^{n_1}\lra X_1 \lra X_0=KV, S^{n_2}\lra X_2 \lra X_1, \ldots ,
S^{n_c}\lra X_c \lra X_{c-1}$$
with $X=X_c$. The least such $c$ is called the {\em s-codimension} of $X$.

(ii) A space  $X$ is a {\em gci} space
if $H^*(X)$ is Noetherian and $H_*(\Omega X)$ has polynomial growth.
The {\em g-codimension} of $X$ is one more than the degree of growth.

(iii) A space $X$ is a {\em zci} space if $H^*(X)$ is Noetherian and
there are elements $z_1, z_2, \ldots , z_c\in Z \Der (C^*(X))$ of non-zero degree
so that $C^*(Y)/z_1/z_2/ \cdots / z_c$ is small for all
$C^*(Y) \in \fg$.

 {
(iv) A space $X$ is an {\em eci} space if $H^*(X)$ is Noetherian,
there is a regular space $K$ and fibration $X\lra K$ with $C^*(X)$ small over $C^*(K)$
and there are homotopy cofibration sequences of $C^*(X\times_KX)$-modules,
\[ C^*(X)=M_0 \xrightarrow{g_1} \Sigma^{n_1} M_0 \to M_1,\ \ldots \ ,\ M_{c-1}
\xrightarrow{g_c} \Sigma^{n_c} M_{c-1} \to M_c\]
such that $M_c$ is small as an $C^*(X\times_KX)$-module and the degree of each $g_i$ is
not zero.
}

(v) We say $X$ is  {\em bci} space if $H^*(X)$ is Noetherian and
$C^*(X)$ is virtually small as a $C^*(X\times_KX)$-module for some
regular space $K$ and fibration $X\lra K$ with $C^*(X)$ small over $C^*(K)$.

(vi) We say $X$ is  {\em qci} space if $H^*(X)$ is Noetherian and
 each $C^*(Y) \in \fg$ is virtually small.
\end{defn}

The main result of this paper is as follows.

\begin{thm}
\label{thm:equivalence zcigcieci}
For a rational space $X$ the sci,  {eci } and gci
conditions are equivalent.  {If in addition $X$ is pure, then the conditions
above are equivalent to the zci condition.}
\end{thm}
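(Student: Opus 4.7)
The plan is to establish the cycle sci $\Rightarrow$ eci $\Rightarrow$ gci $\Rightarrow$ sci, thereby proving the equivalence of the first three conditions; for pure spaces we then add zci to the cycle via sci $\Rightarrow$ zci together with the already-discussed zci $\Rightarrow$ gci of Section \ref{sec:zciisgci}.

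For sci $\Rightarrow$ eci (to be carried out in Section \ref{sec:sciisbci}), let $K=KV$ be the regular base and $R=C^*(X)$, and set $R^e=C^*(X\times_K X)$. Each spherical fibration $S^{n_i}\lra X_i \lra X_{i-1}$ in the defining tower yields an Euler-type self-map of the corresponding bimodule whose cofibre corresponds to the tower with one fewer spherical fibration. Concatenating these, one obtains cofibration sequences of $R^e$-modules
\[ R=M_0 \xrightarrow{g_1} \Sigma^{n_1} M_0 \to M_1 \to \cdots \to M_c\]
ending in a module $M_c$ finitely built from $C^*(K\times K)$, hence finitely built from $R^e$ (using that $R$ is small over $C^*(K)$ by iterated application of Lemma \ref{lem:finfibissmall}). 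This is exactly the eci condition.

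For eci $\Rightarrow$ gci, apply the functor $(-)\tensor_R \Q$ (using one of the two $R$-factors in $R^e$) to the eci cofibration sequences. The result is a sequence in $\Der(R)$ starting from $R\tensor_R\Q=\Q$ and exhibiting $\Q$ as built from the terminal module $N_c:=M_c\tensor_R\Q$ by $c$ iterated cones of self-maps of nonzero degree. Lemma \ref{lem:bimodisallmod} shows $N_c$ is small over $R$. Now apply $\Ext^*_R(-,\Q)$: using Proposition \ref{prop:EMRS} to identify $\Ext^*_R(\Q,\Q)=H_*(\Omega X)$, the long exact sequences of the $c$ cofibrations show inductively that $H_*(\Omega X)$ is finitely generated over a polynomial algebra on $c$ Euler-class duals, hence has polynomial growth. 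Combined with the Noetherian hypothesis built into the eci definition, this is gci. The reverse implication gci $\Rightarrow$ sci is the striking result of Section \ref{sec:gciissci}. For the pure case, the implication sci $\Rightarrow$ zci (Section \ref{sec:bciiszci}) uses that a pure sci space admits a fibration $S(W)\lra X \lra KV$ with $W$ in odd degrees, and proceeds by an explicit computation identifying $HH^*(C^*(X)|C^*(KV))$ as a polynomial ring on generators dual to $W$; these map to central elements $z_1,\ldots,z_c\in Z\Der(C^*(X))$, and a second application of Lemma \ref{lem:bimodisallmod} shows $M/z_1/\cdots/z_c$ is small for every $M\in\fg$.

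The two deep implications are the main obstacles. The gci $\Rightarrow$ sci direction must manufacture actual Postnikov structure out of a numerical growth condition on $H_*(\Omega X)$, relying on the Milnor-Moore theorem to pass to $\pi_*(\Omega X)$ and crucially on the Noetherian hypothesis on $H^*(X)$ (without which, as the introduction emphasizes, one obtains only an elliptic Noether fibre rather than the full decomposition into iterated spherical fibrations). The pure sci $\Rightarrow$ zci direction hinges on pinning down the Hochschild cohomology ring $HH^*(C^*(X)|C^*(KV))$ precisely enough to extract the $z_i$, rather than merely observing the existence of some central elements; this calculation is what forces the restriction to pure spaces, since in the non-pure case the nontrivial differentials in the Sullivan model obstruct the polynomial identification.
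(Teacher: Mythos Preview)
Your cycle of implications (sci $\Rightarrow$ eci $\Rightarrow$ gci $\Rightarrow$ sci, together with pure sci $\Rightarrow$ zci $\Rightarrow$ gci) is exactly the paper's roadmap, and your assignment of each arrow to the relevant section is correct. Two places where your sketches drift from what the paper actually does deserve comment.

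In the sci $\Rightarrow$ eci step, the terminal bimodule the paper produces is (a shift of) $C^*(X\times_{KV}X)=R^e$ itself, obtained by walking down the tower $X^e_i=X\times_{X_i}X$ (see the end of Section~\ref{sec:sciisbci}); smallness over $R^e$ is then tautological, and no appeal to $C^*(K\times K)$ or to smallness of $R$ over $C^*(K)$ is needed at that point.

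More substantively, your eci $\Rightarrow$ gci sketch asserts that $H_*(\Omega X)$ is finitely generated over a polynomial algebra on ``Euler-class duals''. This is not available in the eci setting: the maps $g_i$ are only endomorphisms of the successive bimodules $M_{i-1}$, not elements of $Z\Der(R)$, so after tensoring down they do not assemble into a commuting polynomial action on $\Ext_R^*(\Q,\Q)$. (That gap between ``self-map at each stage'' and ``central natural transformation'' is precisely the distinction between eci and zci.) The paper's argument in Section~\ref{sec:zciisgci} avoids this entirely: it applies $\Q\otimes_{C^*(X)}(-)$ to the tower and uses the elementary growth estimate of Lemma~\ref{lem:growthintriangles} iteratively, concluding directly that $C^*(\Omega X)$ has growth $\leq c-1$. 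Your conclusion is correct, but the intermediate module-finiteness claim should be replaced by this growth-lemma argument.
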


We will establish the implications
$$sci \stackrel{A}\Rightarrow  {eci} \stackrel{B}\Rightarrow gci
\stackrel{C}\Rightarrow sci. $$
We establish A in Section \ref{sec:sciisbci}, B in Section \ref{sec:zciisgci},
and C in Section \ref{sec:gciissci}. The first two implications are fairly
straightforward in the sense that they can also be proved in the non-rational
context \cite{pzci}. The implication C takes a growth condition and
gives a structure theorem, and could be viewed as
the main result of the present paper.
 {In Section \ref{sec:bciiszci} we show that a pure sci space is zci,
while in Section \ref{sec:zciisgci} we show that the zci condition implies gci.}

\begin{remark}
(i) If $X$ is elliptic then $H^*(X)$ and $\pi_*(\Omega X)$ are both
 finite dimensional, so it is clear that every elliptic space is gci.

(ii) It is also clear that zci implies qci, and that if the natural
transformations giving the zci condition come from Hochschild cohomology
then this implies  {eci, and eci clearly implies } bci.
\end{remark}

\subsection{Hypersurface rings.}

A hypersurface is a complete intersection of codimension 1. The first
four definitions adapt to define hypersurfaces, 
g-hypersurfaces,  {z-hypersurfaces and e-hypersurfaces}.
The notion of g-hypersurface (i.e., the dimension of
the groups $\Ext^i_R(k,k)$ is bounded) may be strengthened to the
notion of p-hypersurface where we require
that they are eventually periodic, given by multiplication with an element
of the ring. All  {five } of these conditions are equivalent
by results of Avramov.

One possible formulation of b-hypersurface would be to require that
the $R$ builds a small $R^e$-module in one step (or equivalently,
that $R$ is a
z-hypersurface but $z$ arises from $HH^*(R)$).
 {Both these definitions are equivalent to being an e-hypersurface.}

Finally, we may say that $R$ is a q-hypersurface if every finitely
generated module $M$ has a self map with non-trivial small mapping cone.

\subsection{Hypersurface spaces.}

All six of these conditions have obvious formulations for spaces.
A space $X$ is an {\em s-hypersurface} if there is a fibration
$$S^n \lra X \lra KV$$
with $V$ even and finite dimensional.  It is a {\em z-hypersurface}
if there is an element $z$ of non-zero degree in $Z\Der (C^*(X))$ so that,
for any $M$ in $\fg$, the mapping cone of $z:M\lra M$ is small.
 It is a {\em g-hypersurface}
if the dimensions of  $H_i(\Omega X)$ are bounded, and a {\em p-hypersurace}
if they are eventually periodic given by multiplication by an element of the
ring.

 {The space $X$ is an {\em e-hypersurface} if $C^*(X)$  builds a small
$C^*(X\times_KX)$-module in one step for a regular space $K$. }
Finally,  $X$ is a {\em q-hypersurface} if every finitely
module $C^*(Y)$ in $\fg$ has a self map with non-trivial small mapping cone.

\section{Standard form for sci spaces}
\label{sec:sci}

We are eventually going to show that the sci, gci and  {eci }
conditions are equivalent for rational spaces. Of the
conditions, the easiest to get a grip on is the sci condition, and it seems
worthwhile to begin by anchoring it  in reality by giving a structure theorem.
In the rational context, we may put sci spaces into a standard form.

\begin{thm}
\label{thm:sci}
A space $X$ is sci if and only if there exists a fibration sequence
\[ F \to X \to KV , \]
where $KV$ is a regular space and $\pi_*(F)$ is finite dimensional and
entirely in odd degrees; in this case
$$\codim (X)=\dimQ (\pi_*(F))=\dimQ (\pi_{odd}(X)).$$
\end{thm}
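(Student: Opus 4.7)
The plan is to handle the two directions separately, and derive the codimension count as a byproduct of the construction.

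For the implication ($\Leftarrow$), I would build $F$ from its Postnikov tower. Since $\pi_*(F)$ is finite dimensional and entirely in odd degrees, each non-trivial Postnikov stage is a principal fibration with fibre an odd-dimensional rational Eilenberg--MacLane space, which rationally is an odd sphere. Choosing a basis for $\pi_*(F)$ decomposes the tower into $\dimQ(\pi_*(F))$ individual odd-sphere fibrations. Pulling the tower back along $X \to KV$ yields an sci tower for $X$ of the same length, so $\codim(X) \leq \dimQ(\pi_*(F))$.

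For the implication ($\Rightarrow$), the strategy is to pass to Sullivan models and show that every sci tower can be presented so that all even-degree generators are cocycles. The key lemma I would establish is that in any relative Sullivan algebra $M_B \to M_B \otimes \Lambda(v, w)$ modelling a rational $S^{2k}$-fibration (with $|v|=2k$, $|w|=4k-1$, and $Dw$ having leading term $v^2$), the class $[Dv] \in H^{2k+1}(B)$ is automatically zero. This falls out of $D^2 w = 0$: writing $Dw = v^2 + a_1 v + a_0$ with $a_i \in M_B$ and separating by powers of $v$ gives $2\,Dv + Da_1 = 0$, so $Dv = -Da_1/2$ is a boundary, and the substitution $v \mapsto v + a_1/2$ yields a cocycle in its place.

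Applying this normalisation at every even-sphere step of a given sci tower produces a Sullivan model $(\Lambda(V^e \oplus V^o), d)$ for $X$ in which $V^e$ (the initial generators of $KV_0$ together with the normalised even-sphere generators) satisfies $dV^e = 0$, while $V^o$ (odd-sphere generators together with the odd partners from the even-sphere pairs) lies entirely in odd degrees. Then $(\Lambda V^e, 0)$ is a sub-CDGA modelling some $KV$, and $\Lambda V^e \hookrightarrow \Lambda V$ is a relative Sullivan algebra realising a fibration $F \to X \to KV$ whose fibre has Sullivan model $(\Lambda V^o, \bar{d})$; in particular $\pi_*(F) = (V^o)^\vee$ is finite dimensional and purely in odd degrees. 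For the codimension count, each sci step contributes exactly one odd generator to the model, so $\dim V^o = c$ for any tower of length $c$; since the minimal Sullivan model is a retract of any Sullivan model, $\dimQ(\pi_{odd}(X)) = \dim V^{o,\min} \leq \dim V^o = c$, giving $\dimQ(\pi_{odd}(X)) \leq \codim(X)$. The long exact sequence of the fibration forces $\dimQ(\pi_*(F)) = \dimQ(\pi_{odd}(X))$, and combining with the backward inequality gives equality throughout.

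The main obstacle I anticipate is the bookkeeping for the iterative normalisation: each change of basis $v_i \mapsto v_i + a_{1,i}/2$ alters the base model seen by later even-sphere steps, so one must verify that the successive normalisations remain consistent, that the procedure terminates after finitely many steps, and that the resulting reorganised model still satisfies the Sullivan filtration condition so that $\Lambda V^e \hookrightarrow \Lambda V$ genuinely is a relative Sullivan algebra.
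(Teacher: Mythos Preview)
Your proposal is correct and follows essentially the same approach as the paper. The key computation---completing the square in $Dw = v^2 + a_1 v + a_0$ via $D^2w = 0$ to force $Dv$ to be exact, then substituting $v \mapsto v + a_1/2$---is exactly the content of the paper's Lemma~\ref{lem:sciodd}, and your ($\Leftarrow$) argument via the Postnikov tower of $F$ is the same induction the paper runs by killing the top homotopy group.

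The one organizational difference is worth noting because it dissolves the bookkeeping obstacle you flagged. Rather than normalising all even generators and then reorganising the model globally at the end, the paper phrases Lemma~\ref{lem:sciodd} as: an even-sphere fibration $S^{2m}\to X\to B$ can be rewritten on the spot as an odd-sphere fibration $S^{4m-1}\to X\to B\times K(\Q,2m)$. Applying this inductively up the sci tower (absorbing each new $K(\Q,2m)$ into the base as you go) keeps you inside the class of sci towers at every step, with only odd-sphere fibres. The Sullivan filtration condition is then automatic, and no end-of-proof reorganisation is needed. Your version is equivalent but requires the check you identified; the paper's formulation simply avoids it.
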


Before proceeding it is useful to note that all the spherical fibrations
in the definition of an sci space may be taken to be odd.

\begin{lemma}
\label{lem:sciodd}
If $X$ can be formed from $B$ with an even spherical
fibration $S^{2m} \lra X \lra B$, then it can be formed
from $B\times K(\Q ,2m)$ by an odd spherical fibration
$$S^{4m-1} \lra X \lra B\times K(\Q,2m).$$
Accordingly, an sci space of codimension $c$ may be constructed in $c$ steps
from  a regular space using only odd dimensional spherical fibrations.
\end{lemma}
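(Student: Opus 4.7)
The plan is to work at the level of Sullivan models. Let $B'$ be a Sullivan model of $B$; the rational fibration $S^{2m}\to X\to B$ is then modelled by a relative Sullivan algebra $B'\to B'\otimes\Lambda(u,v)$ with $|u|=2m$, $|v|=4m-1$, and fibre the standard Sullivan model $(\Lambda(u,v),\,du=0,\,dv=u^2)$ of $S^{2m}$. On degree grounds the twisted differential is forced to take the form $du=\eta$ and $dv=u^2+\omega u+\tau$ for some $\eta\in B'^{2m+1}$, $\omega\in B'^{2m}$, and $\tau\in B'^{4m}$; applying $d^2=0$ yields the identities $d\omega=-2\eta$ and $d\tau=-\omega\eta$ (separating the $u^1$ and $u^0$ coefficients).

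The key move is a change of basis. The element $u':=u+\omega/2$ satisfies $du'=\eta+d\omega/2=0$, and since $u'$ and $\omega$ are of even degree they commute, so substituting $u=u'-\omega/2$ rewrites $dv$ as $u'^2+\tau'$ with $\tau':=\tau-\omega^2/4\in B'^{4m}$; the identities above then give $d\tau'=0$ (a brief computation with $d(\omega^2)=-4\omega\eta$). The model is thus $B'\otimes\Lambda(u',v)$ with $du'=0$ and $dv=u'^2+\tau'$. Now $B'\otimes\Q[x]$ (with $|x|=2m$, $dx=0$) is a Sullivan model of $B\times K(\Q,2m)$, and the CDGA map $B'\otimes\Q[x]\to B'\otimes\Lambda(u',v)$ sending $x\mapsto u'$ presents the target as a relative Sullivan algebra with a single extra generator $v$ whose differential $dv=x^2+\tau'$ lies in the base $B'\otimes\Q[x]$. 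Quotienting by $(B'\otimes\Q[x])^+$ leaves $\Lambda(v)$ with $dv=0$, the Sullivan model of $S^{4m-1}$; this realises the required fibration $S^{4m-1}\to X\to B\times K(\Q,2m)$.

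For the accompanying claim, I would induct on the number of even-dimensional spherical fibrations in the sci tower, at each stage selecting the lowest such fibration $S^{2m}\to X_i\to X_{i-1}$ and replacing it by the odd fibration $S^{4m-1}\to X_i\to X_{i-1}\times K(\Q,2m)$ from the first part. By the choice of lowest, every fibration building $X_{i-1}$ from $KV$ is already odd; pulling these back along the projection $X_{i-1}\times K(\Q,2m)\to X_{i-1}$ yields a tower of odd spherical fibrations (pullback of a spherical fibration is spherical of the same dimension) presenting $X_{i-1}\times K(\Q,2m)$ as built from $KV\times K(\Q,2m)$, which remains a regular space. The fibrations above $X_i$ are unaffected, so the count of even fibrations strictly decreases while the total step count $c$ is preserved; iteration finishes the proof. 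The main obstacle is the Sullivan-level change of basis identifying the cocycle $u'$; once that is in hand, the rest is bookkeeping.
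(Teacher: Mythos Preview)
Your proof is correct and follows essentially the same approach as the paper: the key move in both is the change of basis $u'=u+\omega/2$ (the paper's $x'=x+a/2$), after which $du'=0$ and $dv=(u')^2+\tau'$ with $\tau'\in C^*(B)$, so that adjoining $u'$ to the base yields the desired odd spherical fibration. You are more explicit than the paper in verifying the cocycle conditions from $d^2=0$ and in spelling out the induction for the second claim, but the substance is the same.
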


\begin{proof}
If $C^*(X)=C^*(B)\sdr \Lambda (x_{2m},y_{4m-1})$, then if
$dy=x^2+ax+b$ we may change basis by taking
$x'=x+a/2$ and find $dx'=0, dy=(x')^2+z$, where
$z=b-a^2/4\in C^*(B)$.
Adjoining $x'$ to the model of $B$, we get the base of the
required fibration.
\end{proof}

\begin{proof}[of Theorem \ref{thm:sci}]
If $X$ is sci, by Lemma \ref{lem:sciodd}
we may use only odd spheres in the fibres. Now the composite function
$X \lra KV$ has fibre with only odd dimensional homotopy, giving a fibration
of the stated form.

We prove the converse statement by induction on the dimension of the
odd homotopy. The result  is trivial if the homotopy is entirely even. Suppose
then that  $X$ lies in a fibration
$$F \lra X \lra KV$$
and that  $x \in \pi_m(F)$ is an element of highest degree.
Construct a fibration
$$ S^m \lra F \lra F'$$
by killing  $x$, so that $\dimQ (\pi_*(F'))=\dimQ (\pi_*(F))-1$.
Thus, we may choose models so that
$C^*(F)=C^*(F')\sdr C^*(S^m)$, and
$$C^*(X)=C^*(KV)\sdr [C^*(F')\sdr C^*(S^m)]. $$
Let $X'$ be modelled by the subalgebra generated by $C^*(KV)$ and
$C^*(F')$. This gives  fibrations
$$S^m\lra X\lra X' \mbox{ and } F'\lra X'\lra KV.$$
By induction $X'$ is sci,  so that $X$ is sci as required. The
codimension is obviously bounded below by $\dimQ (\pi_{odd}(X))$, and
we have described a procedure achieving this bound.
\end{proof}

The following rearrangement result will be useful later.

\begin{cor}
\label{cor:scirearrangement}
If $X$ occurs in a fibration
$$X' \lra X\lra KV$$
with $X'$ sci of codimension $c$, then $X$ is itself
sci of codimension $c$.
\end{cor}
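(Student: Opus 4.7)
The plan is to induct on the codimension $c$, using the sci-structure of $X'$ to peel off the top odd spherical fibration and extend it across $KV$.

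For the base case $c=0$, $X'=KV'$ is regular, and the relative Sullivan model of $KV'\to X\to KV$ is $\Lambda V\otimes \Lambda V'$ with $d|_V=0$. Since both $V$ and $V'$ consist entirely of even generators, the odd-degree element $dv'$ must vanish for every $v'\in V'$; hence $X\simeq K(V\oplus V')$ is regular.

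For the inductive step, the sci-structure of $X'$ provides an odd spherical fibration $S^n\to X'\to X'_{c-1}$ with $X'_{c-1}$ sci of codimension $c-1$. Choose a Sullivan model $\Lambda V'\otimes \Lambda(w'_1,\ldots,w'_c)$ for $X'$ with $w'_c$ corresponding to the top spherical fibration, and set up a relative Sullivan model $\Lambda V\otimes \Lambda V'\otimes \Lambda(w'_1,\ldots,w'_c)$ for $X'\to X\to KV$. I would identify the sub-CDGA $\Lambda V\otimes \Lambda V'\otimes \Lambda(w'_1,\ldots,w'_{c-1})$ as modelling a space $X_{c-1}$ fitting into a fibration $X'_{c-1}\to X_{c-1}\to KV$; the quotient by this sub-CDGA is $\Lambda(w'_c)$ with trivial induced differential, modelling the odd sphere $S^n$, and giving a spherical fibration $S^n\to X\to X_{c-1}$. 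Applying the inductive hypothesis to $X'_{c-1}\to X_{c-1}\to KV$ yields that $X_{c-1}$ is sci of codimension $c-1$, and hence $X$ is sci of codimension $c$.

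The principal obstacle is showing that the proposed sub-CDGA is indeed closed under $d$: one must ensure that neither $dv'$ for $v'\in V'$ nor $dw'_j$ for $j<c$ involves $w'_c$. When $|w'_c|$ is maximal among the Sullivan generators of $X'$, this is immediate from the Sullivan filtration. In general I would reduce to this case by reordering the spherical fibrations of the sci-tower of $X'$ (via Lemma~\ref{lem:sciodd}-style manipulations) and, where necessary, performing a parity-based change of basis on $V'$ to absorb any residual $w'_c$-terms appearing in $dv'$, exploiting that $V$ is concentrated in even degrees.
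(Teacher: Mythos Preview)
Your inductive approach differs from the paper's more direct argument: the paper applies Theorem~\ref{thm:sci} once to $X'$ to get $F'\to X'\to KV'$ with $\pi_*(F')$ odd, then asserts that the model $KV\rtimes(KV'\rtimes F')$ can be rewritten as $K(V\oplus V')\rtimes F'$ and concludes via Theorem~\ref{thm:sci} again. Both arguments, however, come down to the same parity claim --- that in the relative Sullivan model of $X$ over $\Lambda V$, the even fibre-generators $V'$ can be arranged to have zero differential (equivalently, in your formulation, that $dV'$ avoids $w'_c$).

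Your acknowledged obstacle is real, and the proposed fix does not work. When the top-degree generator of $X'$ is \emph{even} rather than odd, your $w'_c$ cannot be taken maximal among all generators of $X'$, and a term of $dv'$ lying in $\Lambda^+V\cdot w'_c$ need not be removable by any change of basis. Concretely, take $V=\Q v_6$ and $X'=S^3\times K(\Q,8)$, so $V'=\Q c_8$, $W'=\Q b_3$, both with zero differential in $X'$; then $(\Lambda(v_6,b_3,c_8),d)$ with $dv=db=0$, $dc=vb$ is a minimal relative Sullivan model over $\Lambda(v_6)$, and since there are no decomposables of degree~$8$ in this algebra, no substitution $c\mapsto c+g$ eliminates the $b$ in $dc$. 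In fact this $X$ is not sci at all: one computes $H^*(X)\cong\Q[v]\oplus\bigoplus_{k\ge 0}\Q\cdot[bc^k]$ with $[v]\cdot[bc^k]=0$, which is not Noetherian. So the statement fails without the paper's standing hypothesis that $H^*(X)$ be Noetherian, and neither the paper's parity step nor your change-of-basis manoeuvre invokes that hypothesis.
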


\begin{proof}
By Theorem \ref{thm:sci}, $X'$ has a model of the
form $X'=KV' \sdr F'$ with $\pi_*(F')$ finite dimensional
and in odd degrees and $X=KV \sdr X'$
with both $V$ and $V'$ even and finite dimensional.
By parity there can be no differential from $KV$ to $KV'$, so
$$X=KV' \sdr (KV\sdr F')\simeq (KV' \sdr KV)\sdr F'.$$
Since any fibration with base $KV'$ and fibre $KV$ is a
product, we obtain a fibration
$$F'\lra X\lra K(V\oplus V'). $$
By Theorem \ref{thm:sci} again we deduce $X$ is sci of codimension $c$.
\end{proof}

In terms of rational models we can restate the sci condition very simply.
The result is immediate from Theorem \ref{thm:sci} by taking a Sullivan
model of the fibration.
\begin{cor}
A space $X$ is sci if and only if $X$ has a cochain algebra model $(\Lambda V,d)$ where $d(V^{even})=0$. \qqed
\end{cor}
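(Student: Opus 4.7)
The plan is to read off the corollary directly from Theorem~\ref{thm:sci} by translating between fibrations and relative Sullivan models. Both directions are essentially bookkeeping once one organizes the argument around the split of $V$ into even and odd parts.

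For the forward implication, suppose $X$ is sci. By Theorem~\ref{thm:sci} there is a fibration $F \to X \to KV'$ with $V'$ even and finite dimensional and with $\pi_*(F)$ finite dimensional and concentrated in odd degrees. The base $KV'$ has minimal Sullivan model $(\Lambda V', 0)$, and a minimal Sullivan model $(\Lambda W, d_F)$ of $F$ has $W$ concentrated in odd degrees, since in a minimal Sullivan model the generators in degree $n$ are dual to $\pi_n \otimes \Q$. Lifting the fibration to a relative Sullivan algebra gives a model $(\Lambda V' \otimes \Lambda W, d)$ for $X$. Setting $V = V' \oplus W$, we have $V^{even} = V'$ and $d$ vanishes on $V^{even}$ by construction, producing the desired cochain algebra model.

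For the converse, assume $X$ has a Sullivan model $(\Lambda V, d)$ with $d(V^{even}) = 0$. Then $(\Lambda V^{even}, 0)$ sits inside $(\Lambda V, d)$ as a sub-CDGA, and the inclusion is a relative Sullivan algebra with quotient $(\Lambda V^{odd}, \bar d)$. Geometrically this realizes a fibration $F \to X \to KV^{even}$ in which the fibre $F$ is modelled by $\Lambda V^{odd}$; since all generators are in odd degrees, $\pi_*(F)$ is concentrated in odd degrees. Theorem~\ref{thm:sci} then applies to conclude that $X$ is sci.

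The only point that could require attention is the finiteness: Theorem~\ref{thm:sci} asks for $V^{even}$ finite dimensional and for $\pi_*(F)$ finite dimensional. Both are built into the standing rational-homotopy context of the paper and are automatic when one starts from an sci $X$; conversely, these finiteness conditions are precisely what it takes for the model $(\Lambda V, d)$ with $d(V^{even}) = 0$ to be a model of a space to which Theorem~\ref{thm:sci} applies. I expect no genuine obstacle: the corollary is just a reformulation of the structural characterization in Theorem~\ref{thm:sci}, with the fibration translated into its (minimal) relative Sullivan model.
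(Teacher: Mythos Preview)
Your approach is exactly the paper's: both directions come from Theorem~\ref{thm:sci} by passing between the fibration $F\to X\to KV^{\mathrm{even}}$ and its relative Sullivan model, and the paper's entire proof is the single sentence ``immediate from Theorem~\ref{thm:sci} by taking a Sullivan model of the fibration.'' You have simply unpacked that sentence.

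One sharpening of your final paragraph: the finiteness of $V$ is \emph{not} a consequence of the paper's standing Noetherian hypothesis alone, so ``built into the standing context'' overstates things. For instance $X=S^3\vee S^3$ has finite (hence Noetherian) cohomology and a minimal Sullivan model with $V^{\mathrm{even}}=0$, so $d(V^{\mathrm{even}})=0$ holds vacuously; yet $\pi_{\mathrm{odd}}(X)$ is infinite and $X$ is not sci. The corollary should therefore be read with $V$ finite dimensional (equivalently $\pi_*(X)$ finite) as a tacit hypothesis. This is automatic in the forward direction but must be assumed for the converse, a point the paper's one-line proof also glosses over; you were right to flag it.
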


\section{Growth conditions.}
\label{sec:zciisgci}
In this section we prove perhaps the simplest implication between the ci
conditions: for simply connected rational spaces of finite type, 
 {eci (and also zci) implies gci}.

\subsection{Polynomial growth.}
Throughout algebra and topology it is common to use the rate of growth of
homology groups as a measurement of complexity. We will be working over
$H^*(X)$, so it is natural to assume that our modules $M$
are  {\em locally finite} in the sense that  $H^*(M)$ is cohomologically
 bounded below  and
 $\dimQ (H^i(M))$ is finite for all $i$.

\begin{defn}
We say that a locally finite module $M$ has {\em polynomial growth of
degree $\leq d$},  and write
$\growth (M) \leq d$, if
there is a polynomial $p(x)$ of degree $d$ with
$$\dimQ (H^{n}(M))\leq p(n)$$
for all $n >> 0$.
\end{defn}

\begin{remark}
(i) In commutative algebra the usual terminology is that a
 module of growth $d$ has {\em complexity} $d+1$.

(ii) Note that a complex with bounded homology has growth $\leq -1$. For
complexes with growth $\leq d$ with $d\geq 0$, by adding a constant to the
polynomial, we may insist that the
bound applies for all $n \geq 0$.
\end{remark}

\subsection{Mapping cones reduce degree by one.}
We use the following estimate on growth.
\begin{lemma}
\label{lem:growthintriangles}
Given cohomologically bounded below locally finite modules $M$ and $N$
in a triangle
$$\Sigma_n M \stackrel{\chi}\lra M \lra N$$
with $n \neq 0$, then
$$\growth (M) \leq \growth (N)+1.$$
\end{lemma}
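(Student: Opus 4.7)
The plan is to read the bound straight off the long exact sequence in cohomology associated to the given triangle. Since $(\Sigma_n M)^i = M^{i-n}$, this LES reads
$$\cdots \to H^{i-n}(M) \xrightarrow{\chi_*} H^i(M) \to H^i(N) \to H^{i-n+1}(M) \xrightarrow{\chi_*} H^{i+1}(M) \to \cdots,$$
and extracting dimensions from the two overlapping three-term pieces gives the elementary estimates
\begin{align*}
\dimQ H^i(M) &\le \dimQ H^{i-n}(M) + \dimQ H^i(N), \\
\dimQ H^{i-n+1}(M) &\le \dimQ H^{i+1}(M) + \dimQ H^i(N).
\end{align*}
The role of the hypothesis $n \ne 0$ is precisely to guarantee that one of these inequalities (the first when $n > 0$, the second, after reindexing, when $n < 0$) strictly decreases the codegree at which we evaluate $M$.

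Iterating the chosen inequality drives the codegree of $M$ toward the lower bound $i_0$ below which $H^*(M)$ vanishes, and terminates after at most $\lceil (i - i_0)/|n| \rceil$ steps. This yields
$$\dimQ H^i(M) \;\le\; \sum_{j=0}^{\lceil (i-i_0)/|n|\rceil} \dimQ H^{i - j|n| + c}(N)$$
for some fixed integer shift $c$ depending on $n$. If $\growth(N) \le d$, choose a polynomial $p$ of degree $d$, eventually nondecreasing, with $\dimQ H^k(N) \le p(k)$ for $k \gg 0$; then each of the $O(i)$ summands on the right is at most $p(i+c) + O(1)$, so the right-hand side is bounded by a polynomial in $i$ of degree $d + 1$. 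This gives $\growth(M) \le d + 1 = \growth(N) + 1$, and the edge cases $\growth(N) = -1$ and $\growth(N) = \infty$ are absorbed without modification.

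This is essentially all there is to the proof; the only mildly fiddly point is the case distinction on the sign of $n$, together with the routine observation that summing a polynomial of degree $d$ over an arithmetic progression of length proportional to $i$ produces a polynomial of degree $d+1$ in $i$.
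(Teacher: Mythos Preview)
Your proof is correct and follows essentially the same approach as the paper: both extract the inequality $\dimQ H^i(M) \le \dimQ H^{i-n}(M) + \dimQ H^i(N)$ from the long exact sequence, iterate it until the bounded-below hypothesis kills the remaining $M$-term, and then handle $n<0$ by the symmetric rearrangement of the triangle. The only cosmetic difference is that the paper packages the iterated inequality as a Hilbert-series bound $h_M(t) \le h_N(t)/(1-t^{|n|})$, from which the growth estimate is read off, whereas you bound the resulting sum of $O(i)$ terms directly; these are equivalent.
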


\begin{proof}
The homology long exact sequence of the triangle includes
$$\cdots \lra H^{i-n}(M) \stackrel{\chi}\lra
H^i(M) \lra H^i(N)\lra \cdots .$$
This shows
$$\dimQ (H^i(M))\leq \dimQ (H^i(N))+\dimQ (\chi H^{i-n}(M)) .$$
Iterating $s$ times, we find
\begin{multline*}
\dimQ (H^i(M))\leq \dimQ (H^i(N))+\dimQ(H^{i-n}(N)) +\cdots
\\ \cdots +
\dimQ(H^{i-(s-1)n}(N)) +
\dimQ(\chi^s H^{i-sn}(M)) .
\end{multline*}

To obtain growth estimates, it is convenient to collect
the dimensions of the homogeneous parts into the Hilbert
series $h_M(t)=\sum_n\dimQ (H^i(M))t^i$. An inequality
between such formal series means that it holds between all
coefficients.

First suppose that $n>0$. Since  $H^*(M)$ is bounded below,
if $h_M(t)$ is the Hilbert series of $H^*(M)$ then we have
$$h_M(t) \leq h_N(t)(1+t^{n}+t^{2n}+\cdots )=\frac{h_N(t)}{1-t^{n}},$$
giving the required growth estimate.

If $n=-n'<0$ we rearrange to obtain
$$N'\lra M \lra \Sigma_{n'}M$$
where $N'=\Sigma_{n'-1}N$ and argue precisely similarly.
\end{proof}

\subsection{Growth of  {eci } spaces.}
The implication we require is now straightforward.

 {
\begin{thm}
If $X$ is eci  then it is also gci, and if $X$ has e-codimension $c$
it has g-codimension $\leq c$.
\end{thm}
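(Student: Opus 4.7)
The plan is to push the bimodule cofibration data from the eci hypothesis down to a chain of cofibrations of $C^*(X)$-modules starting from $\Q$, then dualize and apply the growth lemma \ref{lem:growthintriangles} $c$ times to control $H_*(\Omega X) \simeq \Hom_{C^*(X)}(\Q,\Q)$.

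First I would apply $-\tensor_R \Q$ (writing $R:=C^*(X)$) to each of the bimodule cofibrations
\[ M_{i-1} \xrightarrow{g_i} \Sigma^{n_i} M_{i-1} \to M_i, \]
regarding the $M_i$ as $R$-modules via the remaining factor of the bimodule action. Setting $N_i := M_i \tensor_R \Q$, this produces cofibration sequences of $R$-modules $N_{i-1} \to \Sigma^{n_i} N_{i-1} \to N_i$ with $N_0 = R\tensor_R\Q = \Q$. Since $\Q$ is finite-dimensional and hence lies in $\fg$, Lemma \ref{lem:bimodisallmod} converts the smallness of $M_c$ over $R^e$ into smallness of $N_c$ over $R$.

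Next I would apply $\Hom_R(-,\Q)$ throughout and put $P_i := \Hom_R(N_i,\Q)$. By Proposition \ref{prop:EMRS}, $P_0 \simeq C_*(\Omega X)$, so a bound on the growth of $P_0$ is a bound on the growth of $H_*(\Omega X)$. At the other end, smallness of $N_c$ means $N_c$ is finitely built from $R$, so $P_c$ is finitely built from $\Hom_R(R,\Q) = \Q$ and therefore has bounded homology, i.e.\ $\growth(P_c) \leq -1$. The triangles, after contravariant dualization, become (up to rotation) $\Sigma_{n_i} P_{i-1} \to P_{i-1} \to \Sigma P_i$ with $n_i \neq 0$, so Lemma \ref{lem:growthintriangles} gives $\growth(P_{i-1}) \leq \growth(P_i)+1$; iterating $c$ times yields $\growth(P_0) \leq c-1$, so $X$ is gci of g-codimension $\leq c$.

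The only non-cosmetic step will be the single appeal to Lemma \ref{lem:bimodisallmod}, which is what allows bimodule smallness at the top of the ladder to descend to module smallness after the tensor product; the rest is bookkeeping with the growth inequality and should present no real obstacle.
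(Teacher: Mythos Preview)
Your argument is correct and follows essentially the same route as the paper: tensor the bimodule ladder down to $R$-modules starting at $\Q$, use smallness at the top (your invocation of Lemma~\ref{lem:bimodisallmod} is exactly the right justification, and is more explicit than the paper), and then bootstrap with the growth lemma $c$ times. The only cosmetic difference is that the paper applies $\Q\otimes_{C^*(X)}(-)$ in the final step to land on $C^*(\Omega X)$, whereas you apply $\Hom_R(-,\Q)$ to land on $C_*(\Omega X)$; since Lemma~\ref{lem:growthintriangles} is stated for cohomologically bounded below modules, your $P_i$ technically need the (equally trivial) bounded-above variant, or you could simply switch to $\Q\otimes_R(-)$ and avoid the issue.
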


\begin{proof}
It is sufficient to show $C^*(\Omega X)\simeq \Q \otimes_{C^*(X)}\Q $ has
polynomial growth.

By hypothesis there is an appropriate regular space $K$ and self maps
\[ \gamma_1:M_0 \to \Sigma_{|\gamma_1|} M_0,\
\gamma_2:M_1 \to \Sigma_{|\gamma_1|} M_1,\ \ldots \ , \
\gamma_c: M_{c-1} \to \Sigma_{|\gamma_c|} M_{c-1}\]
of non-zero degree in $\Der (C^*(X\times_KX))$,
so that $M_i$ is the cone of $\gamma_i$ and $M_c$, which is the cone of 
$\gamma_c$, is small.
Thus, applying $\Q \otimes_{C^*(X)}(\cdot)$ to $M_c$ we obtain a
complex with  growth $\leq -1$.
By the lemma if we apply  $\Q \otimes_{C^*(X)}(\cdot)$ to
$M_{c-1}$ we obtain a complex
 of growth $\leq 0$. Doing this repeatedly, we  deduce that when we
apply $\Q \otimes_{C^*(X)}(\cdot)$ to $\Q$ itself we obtain a complex
with  growth $\leq c-1$ as required.
\end{proof}
}

 {The proof above, with minor changes, also yields the following Theorem.}
\begin{thm}
If $X$ is zci  then it is also gci, and if $X$ has z-codimension $c$
it has g-codimension $\leq c$.
\end{thm}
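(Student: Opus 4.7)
The plan is to adapt the proof immediately preceding the theorem, replacing the self-maps of bimodules with the natural transformations supplied by the zci hypothesis. As before, it is sufficient to show that $\Q\tensor_{C^*(X)}\Q$ (a model for $C^*(\Omega X)$ up to duality) has polynomial growth of degree $\leq c-1$, since g-codimension is one more than the growth degree.

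First I would apply the zci condition to the module $\Q$ itself. Since $\Q$ is the trivial $H^*(X)$-module it is finitely generated and hence lies in $\fg$, so by hypothesis there exist central elements $z_1,\dots,z_c\in Z\Der(C^*(X))$ of nonzero degrees $a_1,\dots,a_c$ such that the iterated universal Koszul complex
\[ M_c := \Q/z_1/z_2/\cdots/z_c \]
is small in $\Der(C^*(X))$. Setting $M_0=\Q$ and $M_i = M_{i-1}/z_i$, the construction in Section \ref{sec:centre} gives defining triangles
\[ \Sigma^{a_i} M_{i-1} \stackrel{z_i}\lra M_{i-1} \lra M_i \]
in $\Der(C^*(X))$, where the use of centrality is that $z_i$ actually makes sense as a morphism on the module $M_{i-1}$.

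Next I would apply the exact functor $\Q\tensor_{C^*(X)}(-)$, setting $N_i := \Q\tensor_{C^*(X)} M_i$, to obtain triangles
\[ \Sigma^{a_i} N_{i-1} \lra N_{i-1} \lra N_i \]
in $\Der(\Q)$. Since $M_c$ is small in $\Der(C^*(X))$ (built finitely from $C^*(X)$), $N_c$ is built finitely from $\Q=\Q\tensor_{C^*(X)}C^*(X)$, hence has bounded homology and therefore $\growth(N_c)\leq -1$. Now iterate Lemma \ref{lem:growthintriangles}: because each $a_i\neq 0$, the lemma applies at each stage and yields $\growth(N_{c-j})\leq j-1$ for $j=1,\dots,c$. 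In particular $\growth(N_0)=\growth(\Q\tensor_{C^*(X)}\Q)\leq c-1$, and hence $H_*(\Omega X)$ grows like a polynomial of degree at most $c-1$, so $X$ is gci of g-codimension $\leq c$.

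The only point requiring a little care --- and really the sole ``minor change'' from the eci argument --- is the bookkeeping needed to verify that the $N_i$ are cohomologically bounded below and locally finite so that the growth lemma can legitimately be invoked. This follows inductively: $N_0=\Q\tensor_{C^*(X)}\Q$ is so because $\Q$ is finite-dimensional and $H^*(X)$ is Noetherian, and the defining triangles together with the long exact sequence propagate these finiteness properties up to $N_c$. No genuinely new argument is needed beyond replacing the role of the bimodule maps $\gamma_i$ by the central natural transformations $z_i$.
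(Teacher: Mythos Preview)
Your proposal is correct and is precisely the ``minor change'' the paper has in mind: you replace the bimodule self-maps $\gamma_i$ from the eci argument by the central natural transformations $z_i$ applied directly to $\Q\in\fg$, and then run the same growth-lemma induction after applying $\Q\tensor_{C^*(X)}(-)$. There is nothing to add.
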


\section{sci spaces are  {eci } spaces}
\label{sec:sciisbci}

In this section we show that sci spaces (defined by a particular construction)
have a periodic module theory in the sense that they are  {eci}.
This may not
be too surprising, but the particular way in which bimodules and fibrations
are used may be of some interest.

\begin{thm}
\label{thm:sciisbci}
If $X$ is an sci space of codimension $c$, then it is  {eci } of codimension $c$.
\end{thm}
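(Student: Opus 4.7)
The plan is to put $X$ into a convenient standard form using Theorem \ref{thm:sci} and Lemma \ref{lem:sciodd}, so that $X$ is built from a regular base $K = KV$ by a tower
\[ K = X_0, \ S^{n_1} \to X_1 \to X_0, \ \ldots, \ S^{n_c} \to X_c = X \to X_{c-1}\]
of odd-dimensional spherical fibrations. Then $K$ plays the role of the regular space required by the eci definition, with $Q = C^*(K)$. The composite $X \to K$ has fibre $F$ with $\pi_*(F)$ finite dimensional and entirely in odd degrees, so $F$ is an elliptic space with finite-dimensional cohomology, and Lemma \ref{lem:finfibissmall} then supplies the smallness of $C^*(X)$ over $C^*(K)$.

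What remains is to construct the required $c$-step tower of cofibre sequences in $\Der(C^*(X \times_K X))$ beginning at $C^*(X)$ and terminating in a small module. A relative Sullivan presentation of the tower realises
\[ C^*(X \times_K X) \simeq C^*(K) \sdr \Lambda(y_1,\ldots,y_c,y_1',\ldots,y_c'), \quad dy_i = \omega_i(y_{<i}), \ \ dy_i' = \omega_i(y_{<i}'),\]
with the diagonal $X \hookrightarrow X \times_K X$ identifying $y_i$ with $y_i'$. The difference elements $\zeta_i := y_i - y_i'$ have odd codegree $n_i$ and satisfy $\zeta_i^2 = 0$. When $X$ is pure (all $\omega_i \in C^*(K)$) every $\zeta_i$ is a cocycle; in general $d\zeta_i$ lies in the ideal generated by $\zeta_1, \ldots, \zeta_{i-1}$, so the $\zeta_i$'s form a filtered Koszul system.

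I would proceed by induction on $c$. For $c=0$, $X = K$ and $C^*(X) = C^*(X\times_K X)$ is trivially small over itself. For the inductive step, the crucial point is that each odd-degree nilpotent element $\zeta_i$ forces $C^*(X)$ to have an eventually periodic minimal resolution over $C^*(X \times_K X)$; the corresponding ``Bockstein-type'' periodicity class gives a self-map $\chi_i$ of the current module of non-zero degree whose cone truncates one layer of the resolution. Starting from $M_0 = C^*(X)$, set $M_i := \cofibre(\chi_i : M_{i-1} \to \Sigma^{d_i} M_{i-1})$ with $d_i \neq 0$; after $c$ such steps the module $M_c$ is finitely built from $C^*(X \times_K X)$ and hence small.

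The main obstacle is constructing the self-maps $\chi_i$ coherently when the fibration tower has non-trivial $k$-invariants, so that the $\zeta_i$'s are not themselves closed. The cleanest remedy is to carry out the induction layer by layer rather than all at once: one builds $\chi_i$ as a self-map of $M_{i-1}$, at which stage the earlier $\zeta_j$'s have been effectively killed, so $\zeta_i$ becomes closed modulo that data. Equivalently, one may work in the successive derived quotients $C^*(X \times_K X)/(\zeta_1, \ldots, \zeta_{i-1})$, where at each stage the codimension-one case applies. Chaining these $c$ applications produces the desired tower and kills all the periodicity in the resolution, giving eci of codimension exactly $c$.
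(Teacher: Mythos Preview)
Your outline is heading in the right direction and, once the vague parts are made precise, collapses to the paper's argument. Your ``successive derived quotients $C^*(X\times_K X)/(\zeta_1,\ldots,\zeta_{j})$'' are nothing other than the cochains on the intermediate fibre products $X^e_j := X\times_{X_j}X$, and the paper runs the induction through exactly these objects: $M_0 = C^*(X) = C^*(X^e_c)$, $M_i \simeq C^*(X^e_{c-i})$ up to suspension, and $M_c = C^*(X^e_0) = C^*(X\times_K X)$, which is tautologically small over itself.

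The genuine gap is the construction of the self-maps $\chi_i$. You invoke ``Bockstein-type periodicity'' and an ``eventually periodic minimal resolution'', but never produce the map; and the odd elements $\zeta_i$ cannot furnish it directly, since they act as zero on $C^*(X)$ via the diagonal. What you are missing is precisely the codimension-one step you defer: for a fibration $\Omega S^m \to A \to B$ with $m$ odd, one needs a cofibre sequence of $C^*(B)$-modules
\[
C^*(B) \longrightarrow C^*(A) \longrightarrow \Sigma_{m-1}C^*(A).
\]
The paper extracts this from the relative Sullivan model $C^*(A) \simeq C^*(B)\sdr \Lambda(z_{m-1})$: the self-map is the quotient by the submodule $C^*(B)\cdot z^0$, identified with $\Sigma_{m-1}C^*(A)$ via multiplication by the \emph{even} polynomial generator $z$ (Proposition~\ref{prop:loopspherebuild}). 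At step $i$ one applies this to the fibration $\Omega S^{n_{c-i+1}} \to X^e_{c-i+1} \to X^e_{c-i}$, which comes from $X \to X_{c-i+1} \to X_{c-i}$ by Lemma~\ref{lem:fib2}; the resulting self-map is a map of $C^*(X^e_{c-i})$-modules and hence, by restriction along $X^e_{c-i} \to X^e_0$, a map of $C^*(X\times_K X)$-modules. With this lemma supplied, your inductive scheme goes through verbatim; without it, the proposal is a plan rather than a proof.
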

\begin{remark}
\label{rem:bimodisallmod}
The construction will show that all the maps building the small bimodule
are of positive degree, so that Lemma \ref{lem:bimodisallmod} shows that
if $X$ is sci then all
$C^*(X)$-modules in $\fg$ are virtually small.
\end{remark}

We will upgrade the conclusion to show  {that if $X$ is a pure sci space
then $X$ } is zci in Section \ref{sec:bciiszci}.

\subsection{Fibration lemmas}
We will repeatedly use two elementary  lemmas. The first is very well known.
\begin{lemma}
\label{lem:fib1}
If
$$F\lra E \stackrel{p}\lra B$$
is a fibration with a section $s$, then there is a fibration
$$\Omega F\lra B \stackrel{s}\lra E. $$
\end{lemma}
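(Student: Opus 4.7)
The plan is to identify the homotopy fibre of $s:B\to E$ with $\Omega F$, from which the asserted fibration sequence follows after replacing $s$ by a Hurewicz fibration via the standard mapping path construction.

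Fix a basepoint $e_0=s(b_0)\in F\subseteq E$. The homotopy fibre is
$$F(s)=\{(b,\gamma)\in B\times E^I:\gamma(0)=e_0,\ \gamma(1)=s(b)\},$$
and there is a natural map $\mu:\Omega F\to F(s)$ sending a loop $\omega$ in $F$ based at $e_0$ to the pair $(b_0,\omega)$, regarding $\omega$ as a path in $E$ from $e_0$ to $s(b_0)=e_0$. I claim $\mu$ is a weak equivalence, and this is the content of the lemma.

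To verify this I would apply the homotopy long exact sequence of $F(s)\to B\xrightarrow{s}E$:
$$\pi_{n+1}(B)\xrightarrow{s_*}\pi_{n+1}(E)\xrightarrow{\partial}\pi_n(F(s))\to \pi_n(B)\xrightarrow{s_*}\pi_n(E).$$
Since $p\circ s=\mathrm{id}_B$, the map $s_*$ is split injective with left inverse $p_*$, so $\pi_n(F(s))\cong \operatorname{coker}(s_*:\pi_{n+1}(B)\to\pi_{n+1}(E))$. Comparing with the long exact sequence of the original fibration $F\to E\to B$, the section gives a splitting $\pi_{n+1}(E)\cong \pi_{n+1}(F)\oplus s_*\pi_{n+1}(B)$, so this cokernel is $\pi_{n+1}(F)=\pi_n(\Omega F)$. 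A direct chase of the definition of $\partial$ shows the resulting isomorphism $\pi_n(\Omega F)\xrightarrow{\cong}\pi_n(F(s))$ is realised by $\mu_*$.

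Conceptually, the argument is just the splitting of the Puppe sequence of $p$ induced by the section $s$; the only mild care needed is basepoint bookkeeping and verifying compatibility of $\mu$ with the connecting homomorphism. I do not expect any serious obstacle, since the hypothesis $p\circ s=\mathrm{id}_B$ immediately yields the key input that $s_*$ is split injective on all homotopy groups.
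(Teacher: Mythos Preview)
Your argument is correct. You identify the homotopy fibre of $s$ with $\Omega F$ by comparing the two long exact sequences in homotopy and using the splitting $p_*\circ s_*=\mathrm{id}$; this is a perfectly valid route.

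The paper takes a different, more structural approach: it considers the commutative square
\[
\begin{array}{ccc}
B & \stackrel{s}{\lra} & E\\
\big\downarrow{=} && \big\downarrow{p}\\
B & \stackrel{=}{\lra} & B
\end{array}
\]
and invokes the iterated-fibre principle (the total fibre of a square can be computed either way). Taking vertical fibres first gives the map $*\to F$, whose fibre is $\Omega F$; taking horizontal fibres first gives $F(s)\to *$, whose fibre is $F(s)$ itself. Equating the two yields $F(s)\simeq \Omega F$ without any explicit homotopy-group computation. The paper's method is quicker and packages the comparison you carry out by hand into a single standard lemma; it also parallels exactly the proof of the companion Lemma~\ref{lem:fib2}, which uses the same iterated-fibre trick on a cube. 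Your approach has the virtue of being entirely self-contained and making the identification $\mu$ explicit, at the cost of the small bookkeeping you flag at the end.
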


\begin{proof} We start from the square
$$\diagram
B \rto \dto^= & E\dto^p\\
B\rto^= &  B
\enddiagram$$
and take iterated fibres.
\end{proof}

The second lemma is a Third Isomorphism Theorem for fibrations.
\begin{lemma}
\label{lem:fib2}
Given fibrations $Y\lra B \lra C$, if $F=\fibre (B \lra C)$
there is a fibration
$$\Omega F \lra Y\times_BY \lra Y\times_CY.$$
\end{lemma}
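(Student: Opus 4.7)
The plan is to exhibit the map $Y\times_BY\to Y\times_CY$ as a pullback of the diagonal $\Delta:B\to B\times_C B$, and then to identify the fibre of that diagonal as $\Omega F$ using Lemma \ref{lem:fib1}.

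First, I would observe that there is a natural map $Y\times_CY\to B\times_C B$ sending $(y_1,y_2)$ to $(p_B(y_1),p_B(y_2))$, where $p_B:Y\to B$ denotes the first map of the composite $Y\to B\to C$. A point of $Y\times_BY$ is a pair $(y_1,y_2)\in Y\times Y$ with $p_B(y_1)=p_B(y_2)$; this condition is exactly that its image in $B\times_C B$ lies in the diagonal copy of $B$. Hence the square
\[\begin{array}{ccc} Y\times_BY & \to & B \\ \downarrow && \downarrow\Delta \\ Y\times_CY & \to & B\times_C B \end{array}\]
is a pullback.

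Next, I want to identify the fibre of $\Delta:B\to B\times_C B$. The first projection $\pi_1:B\times_C B\to B$ is the pullback of the fibration $B\to C$ along itself, so it is again a fibration with fibre $F$, and $\Delta$ is a section of $\pi_1$. Lemma \ref{lem:fib1} therefore gives a fibration
\[\Omega F\lra B\stackrel{\Delta}\lra B\times_C B.\]
Since the formation of homotopy fibres is preserved by pullback, the first square yields the desired fibration
\[\Omega F\lra Y\times_BY\lra Y\times_CY.\]

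The only subtle point is ensuring that the relevant pullbacks are homotopy pullbacks (so that the fibre is genuinely $\Omega F$ rather than some ill-behaved strict fibre); this is automatic once we insist that $Y\to B$ and $B\to C$ are fibrations, which we may do up to homotopy equivalence. With that replacement the argument is direct.
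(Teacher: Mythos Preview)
Your proof is correct. Both arguments are short diagram chases, but they are organised differently. The paper writes down the evident cube with front face the pullback square defining $Y\times_BY$ and back face the pullback square defining $Y\times_CY$, and then takes fibres of the four ``depth'' maps $Y\times_BY\to Y\times_CY$, $Y\to Y$, $Y\to Y$, $B\to C$; since the front and back faces are homotopy pullbacks, these fibres themselves form a homotopy pullback square, giving $\mathrm{fibre}(Y\times_BY\to Y\times_CY)\simeq *\times_F *\simeq \Omega F$. Your argument instead factors through a single pullback square over the diagonal $\Delta:B\to B\times_CB$ and invokes Lemma~\ref{lem:fib1} to identify the fibre of $\Delta$ as $\Omega F$. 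Your route has the pleasant feature of making Lemma~\ref{lem:fib2} a formal consequence of Lemma~\ref{lem:fib1}; the paper's cube argument is self-contained and does not appeal to the earlier lemma.
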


\begin{proof}
We start from the cube
$$\diagram
&Y\times_C Y \rrto \ddto &&Y\ddto \\
Y\times_B Y \rrto \ddto \urto && Y\ddto \urto &\\
&Y \rrto  &&C \\
Y\rrto \urto &&B\urto&
\enddiagram$$
and take iterated fibres.
\end{proof}

\subsection{Building bimodules.}
It is worth isolating the process that we use repeatedly
to build bimodules. Abstracted from its context, the proof is
extremely simple. The strength of the result is that the
cofibre sequence is one of $C^*(Y)$-modules.

\begin{prop}
\label{prop:loopspherebuild}
Suppose given a  fibration
$$\Omega S^m \lra X \stackrel{f}\lra Y.$$

(i) If $m$ is  odd,
then there is a cofibre sequence of $C^*(Y)$-modules
$$\Sigma_{m-1} C^*(X) \lla C^*(X) \stackrel{f^*}\lla C^*(Y). $$

(ii) If $m$ is  even,
then there is a cofibre sequence of $C^*(Y)$-modules
$$\Sigma_{2m-2} C^*(X) \lla C^*(X) \lla F $$
with $F$ small. More precisely, $F$ is built from two copies of $C^*(Y)$
in the sense that there is a cofibre sequence
$$\Sigma_{m-1} C^*(Y) \lla F \lla C^*(Y)  $$
of $C^*(Y)$-modules.
\end{prop}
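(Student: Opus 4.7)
The plan is to build both cofibre sequences as strict short exact sequences of DG $C^*(Y)$-modules arising from a relative Sullivan model of the fibration $\Omega S^m \to X \to Y$. Rationally, the fibre $\Omega S^m$ is $K(\Q,m-1)$ when $m$ is odd and $S^{m-1}\times K(\Q,2m-2)$ when $m$ is even, so the relative model may be chosen in the form $C^*(X) = C^*(Y) \otimes \Lambda V$, with $\Lambda V = \Q[y]$ and $|y|=m-1$ in case (i), and $\Lambda V = \Lambda(a)\otimes \Q[v]$ with $|a|=m-1$, $|v|=2m-2$ in case (ii). Minimum-codegree considerations in the relative Sullivan extension force the differentials $dy$, $da$ and $dv$ to lie in $C^*(Y)$, $C^*(Y)$ and $C^*(Y)\otimes\Lambda(a)$ respectively.

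For case (i), I would define a formal-derivative map $\pi\colon C^*(X)\to \Sigma_{m-1}C^*(X)$ by $C^*(Y)$-linearly extending $y^i\mapsto i\,y^{i-1}$. A short Leibniz computation using $dy=\omega\in C^*(Y)$, and the fact that $|y|$ is even, shows $\pi d = d\pi$. Since we work over $\Q$, $\pi$ is surjective (divide by $i$) with kernel exactly the constant polynomials in $y$, namely $C^*(Y)$. This yields the short exact sequence $0\to C^*(Y)\to C^*(X)\to \Sigma_{m-1}C^*(X)\to 0$ and hence the stated cofibre sequence.

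For case (ii), the plan is to split the argument through the sub-CDGA
\[
F := C^*(Y)\otimes \Lambda(a)\subset C^*(X),
\]
which is closed under $d$ because $da\in C^*(Y)\subset F$. For the outer sequence, $C^*(X) = F[v]$ as an $F$-module, so the $F$-linear formal derivative $\partial_v\colon C^*(X)\to \Sigma_{2m-2}C^*(X)$, $v^i\mapsto i\,v^{i-1}$, is well-defined; the Leibniz check again exploits that $|v|$ is even, so $v$ and $dv$ commute with the powers $v^k$. Exactly as in case (i), $\partial_v$ is surjective with kernel $F$, giving $0\to F\to C^*(X)\to \Sigma_{2m-2}C^*(X)\to 0$. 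For the inner sequence, $F = C^*(Y)\oplus C^*(Y)\cdot a$ as a graded $C^*(Y)$-module, and projection onto the $a$-coefficient is a DG $C^*(Y)$-module map $F\to \Sigma_{m-1}C^*(Y)$ with kernel $C^*(Y)$: the formula $d(xa)=dx\cdot a + (-1)^{|x|}x\,da$ has its second summand in $C^*(Y)$ and so projects to $dx\cdot a$. Smallness of $F$ is immediate, as it is a finitely generated free graded $C^*(Y)$-module.

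The conceptual input in case (ii) is the choice of intermediate $F$, which corresponds to the $S^{m-1}$ factor in the rational splitting of $\Omega S^m$ and reduces the $v$-step to the same formal-derivative argument as case (i). The technical hurdle, in both cases, is verifying that the formal-derivative maps commute with $d$; this rests on the Leibniz rule, the Koszul signs determined by the parities of the generators, and the possibility of dividing by positive integers over $\Q$.
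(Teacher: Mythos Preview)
Your argument is correct and follows the paper's approach: take a relative Sullivan model of the fibration and realize the desired cofibre sequences as strict short exact sequences of DG $C^*(Y)$-modules, with $F$ in case~(ii) being the $C^*(Y)$-submodule spanned by $1$ and the exterior generator. Your formal-derivative maps $y^i\mapsto i\,y^{i-1}$ and $\partial_v$ to identify the quotient with a suspension of $C^*(X)$ are in fact a bit sharper than the paper's ``multiplication by $z$'' (resp.\ ``multiplication by $t$'') identifications, since the latter are only graded-module isomorphisms and fail to commute with $d$ when $dz\neq 0$; the Leibniz check you describe is exactly what is needed.
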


\begin{proof}
We take a relative Sullivan model for $f^*$.
This has the form $C^*(Y)\sdr C^*(\Omega S^m)$.

If $m$ is odd,  the relative Sullivan model is of the form
$C^*(X)\simeq C^*(Y)\sdr \Lambda (z_{m-1})$,
where $z$ is a polynomial generator. The quotient of this by
the DG-$C^*(Y)$-submodule
$C^*(Y)\cdot z^0$ is again a $C^*(Y)$-module, and the composite
$$\Sigma_{m-1}C^*(X)\stackrel{z}\lra C^*(X) \lra C^*(X)/C^*(Y)$$
is an isomorphism as required.

If $m$ is even,  the relative Sullivan model is of the form
$$C^*(X)\simeq C^*(Y)\sdr \Lambda (z_{m-1},t_{2m-2}).$$
This time  $t$ is a polynomial generator,  and $z$ is an
exterior generator. Accordingly we let $F$ be the
DG-$C^*(Y)$-submodule generated by $t^0$ and $z$, so that
$C^*(Y)/F$ is again a $C^*(Y)$-module.
It is isomorphic to $\Sigma_{2m-2}C^*(X)$
 in the sense that the composite
$$\Sigma_{2m-2}C^*(X)\stackrel{t}\lra C^*(X) \lra C^*(X)/F$$
is an isomorphism.
\end{proof}

\begin{remark}
As an example, we observe that this shows that an s-hypersurface is
a z-hypersurface. Indeed, by hypothesis, we have a fibration
$S^m\lra X \lra KV$, and hence by pullback a fibration
$$S^m \lra X\times_{KV} X \lra X$$
with a section $\Delta : X \lra X\times_{KV}X $. By
Lemma \ref{lem:fib1},  we obtain a fibration
$$\Omega S^m \lra X \stackrel{\Delta}\lra X\times_{KV}X$$
so we may apply Proposition \ref{prop:loopspherebuild} with $Y=X\times_{KV}X$, noting
that a $C^*(Y)$-module is then a $C^*(X)$-bimodule. If $m$ is odd
we then get a cofibre sequence
$$\Sigma_{m-1}C^*(X) \stackrel{\tilde{\chi}}\lla C^*(X)\lla C^*(X\times_{KV}X)$$
of bimodules. As in Subsection \ref{subsec:transcended} note that $\tilde{\chi}$
gives an element of $Z\Der (C^*(X))$ by tensoring down, in the sense that for
any $C^*(X)$-module $M$ we apply $M\tensor_{C^*(X)}(\cdot)$ to get
a cofibre sequence
$$\Sigma_{m-1}M \stackrel{\chi}\lla M \lla C^*(X)\tensor_{C^*(KV)}M. $$
If $M$ is finitely generated,  then it is small as a $C^*(KV)$-module
by Proposition \ref{prop:fgregissmall}
showing that the fibre of $\chi$ is small as required.

The argument when $m$ is even is precisely similar.
\end{remark}

\subsection{The proof}

We now have the necessary ingredients for proving Theorem  {\ref{thm:sciisbci}}.


We suppose $X$ is sci of codimension $c$, so that we may form
$X=X_c$ in $c$ steps from $X_0=KV$ using fibrations
$$S^{n_i}\lra X_i \lra X_{i-1}.  $$
It will simplify the argument to assume all the spheres are
odd dimensional, as we may do by Lemma \ref{lem:sciodd}.

We must show that $C^*(X)$ builds $C^*(X\times_{KV}X)$ as a bimodule
(i.e., as a $C^*(X\times_{KV}X)$-module) using $s$ cofibre sequences.
It is convenient to write $X^e_i=X\times_{X_i}X$, and $X^e=X^e_0$,
so that we want to work with $C^*(X^e)$-modules. However, since we have maps
$$X=X_s\lra X_{s-1} \lra \cdots \lra X_0=KV, $$
we have maps
$$X_s=X^e_s\lra X^e_{s-1} \lra \cdots \lra X^e_0 =X^e, $$
so we may view $C^*(X^e_i)$-modules as $C^*(X^e_0)$-modules by restriction.

We are ready to apply our fibration lemmas.

Pulling back the fibration along $X_i\lra X_{i-1}$ we obtain a
fibration
$$S^{n_i}\lra X_i\times_{X_{i-1}}X_i \stackrel{\pi_1}\lra X_{i}  $$
with a section given by the diagonal $\Delta$. Applying Lemma
\ref{lem:fib1} we obtain a fibration
$$\Omega S^{n_i}\lra X_i \stackrel{\Delta}\lra  X_i\times_{X_{i-1}}X_i.  $$

Similarly, applying Lemma \ref{lem:fib2} to $X_s\lra X_i\lra X_{i-1}$
where $s\geq i$, we obtain a fibration
$$\Omega S^{n_i}\lra X^e_i  \lra  X^e_{i-1}.  $$

Now using the first of these, Proposition \ref{prop:loopspherebuild}
gives a cofibration
$$\Sigma_{n_s}C^*(X) \lla C^*(X) \lla C^*(X^e_{s-1}), $$
of $C^*(X^e_{s-1})$-modules, which we view as a cofibration of
$C^*(X^e)$-modules by pullback. Successive fibrations give
$$\Sigma_{n_i}C^*(X^e_i) \lla C^*(X^e_i) \lla C^*(X^e_{i-1}), $$
until we reach
$$\Sigma_{n_1}C^*(X^e_1) \lla C^*(X^e_1) \lla C^*(X^e), $$
so that
$$C^*(X)=C^*(X^e_s)\finbuilds C^*(X^e_0)=C^*(X^e)$$
as required.\qqed

\newcommand{\ext}{\mathrm{Ext}}     
\newcommand{\End}{\mathrm{End}}     

\section{Hochschild cohomology and  {pure Sullivan algebras}} 
\label{sec:bciiszci}
In this section we calculate the Hochschild cohomology of a  {pure } sci space
$X$ and upgrade the conclusion of Section \ref{sec:sciisbci} to give
the required conclusion that any  {pure } sci space is also zci.

\begin{thm}
\label{thm:sciiszci}
If $X$ is a  {pure } sci space of codimension $c$, then it is zci of codimension $c$.
\end{thm}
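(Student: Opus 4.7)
The theorem asserts that a pure sci space is zci of codimension $c$. Since Theorem~\ref{thm:sciisbci} already gives that every sci space is eci, the remaining task is, in the pure case, to promote the bimodule self-maps built there to classes in $HH^*(R|P) = \Ext^*_{R^e}(R,R)$, where $R = C^*(X)$, $P = C^*(KV)$, and $R^e = C^*(X \times_{KV} X)$. Such classes then descend via the ring homomorphism $HH^*(R|P) \to Z\Der(R)$ of Section~\ref{sec:centre} to central elements $z_1, \ldots, z_c \in Z\Der(R)$ of non-zero degree, which are the candidates for the zci structure.

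Purity puts $R$ into Koszul form: combining $d(V^{\text{ev}})=0$ and $d(V^{\text{od}}) \subset \Lambda V^{\text{ev}}$ with the sci condition yields
\[ R \simeq P \otimes \Lambda(z_1, \ldots, z_c),\qquad dz_i = f_i \in P,\ (f_i)\text{ a regular sequence},\ |z_i| = n_i\ \text{odd}, \]
and the substitution $u_i = z_i - w_i$ identifies $R^e$ with $R \otimes \Lambda(u_1, \ldots, u_c)$, $du_i = 0$. Using Corollary~\ref{cor:scirearrangement} I rearrange the tower of spherical fibrations so that each $X_i := KV \sdr \Lambda(z_1, \ldots, z_i)$ is itself pure sci over $KV$; the resulting algebra maps $R^e \to C^*(X \times_{X_j} X)$ allow me to view every such bimodule as an $R^e$-module by restriction.

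Applying Proposition~\ref{prop:loopspherebuild} to each loop-sphere fibration $\Omega S^{n_i} \to X\times_{X_i}X \to X \times_{X_{i-1}} X$ arising in the proof of Theorem~\ref{thm:sciisbci} gives a cofibre sequence of $R^e$-modules
\[ \Sigma_{n_i - 1} C^*(X\times_{X_i}X) \lla C^*(X\times_{X_i}X) \lla C^*(X\times_{X_{i-1}}X), \]
with bimodule self-map of non-zero degree. To match these against Hochschild classes, I use the Koszul--Tate resolution
\[ \tilde R := R^e \otimes \Q[s_1, \ldots, s_c],\qquad ds_i = u_i,\qquad |s_i| = n_i - 1\ \text{(even)}, \]
which is legitimate over $\Q$ and computes $HH^*(R|P) \cong H^*(R) \otimes \Q[\chi_1, \ldots, \chi_c]$, with $\chi_i$ dual to $s_i$. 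Purity ensures no higher Massey-type interactions among the spherical factors, so the geometric self-maps correspond to the polynomial generators $\chi_i$ one factor at a time, and the images $z_1, \ldots, z_c$ in $Z\Der(R)$ are the required central elements.

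For the zci condition, I iterate the cofibre sequences: rotating each triangle, the cofibre of $z_i$ acting on $C^*(X \times_{X_i} X)$ is a suspension of $C^*(X \times_{X_{i-1}} X)$, so after $c$ steps one obtains $R\,/\,z_c\,/\,\cdots\,/\,z_1 \simeq \Sigma^c R^e$ in $\Der(R^e)$. Since $R^e$ is small in $\Der(R^e)$, Lemma~\ref{lem:bimodisallmod} implies that for any $M \in \fg$ the iterated cone $M\,/\,z_c\,/\,\cdots\,/\,z_1$ is, up to suspension, $R \otimes_P M$, which is small in $\Der(R)$ because $H^*(M)$ is finitely generated over $H^*(R) = P/(f_1, \ldots, f_c)$, hence over the regular polynomial ring $P$. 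The principal obstacle is to match the geometric bimodule self-maps from Section~\ref{sec:sciisbci} with the Koszul--Tate polynomial generators $\chi_i$ and so confirm that the iterated cofibre is genuinely $\Sigma^c R^e$; purity reduces this to $c$ independent hypersurface verifications, each of which is the single-sphere case handled in the remark following Proposition~\ref{prop:loopspherebuild}.
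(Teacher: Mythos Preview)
Your overall strategy matches the paper's: build a Koszul-type cofibrant model for $R$ over $R^e = C^*(X\times_{KV}X)$, read off polynomial generators $\chi_i\in HH^*(R|P)$, and push them to $Z\Der(R)$. The paper arrives at the same Hochschild description (Theorem~\ref{thm:Hochschildcalculation}) but inductively, one sphere at a time, via Proposition~\ref{prop:Hochschild}. Where you diverge from the paper, and where the real gap lies, is in the sentence ``the geometric self-maps correspond to the polynomial generators $\chi_i$ one factor at a time.'' This is precisely the content of the paper's Proposition~\ref{prop:lifttoA} and Corollary~\ref{cor:lifting}, and it is not automatic: the paper has to construct an auxiliary space $T$ (Lemma~\ref{lem:magicalT}) sitting in a fibration $X\to T\to S(\Q x)$ and a homotopy pullback square identifying $T\times_{X^e_Z}X^e_Y$ with $X$, in order to show that the Hochschild class $\zeta$ really satisfies $\zeta\otimes_B A\simeq\varphi$. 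Your appeal to ``the single-sphere case handled in the remark following Proposition~\ref{prop:loopspherebuild}'' does not cover this: that remark treats $X$ itself as the hypersurface over $KV$, whereas here you need the Hochschild class on $R=C^*(X)$ to reproduce a bimodule self-map of the intermediate object $C^*(X\times_{X_i}X)$, which is a different statement.

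Two smaller points. First, your claim that $(f_i)$ is a regular sequence and $H^*(R)=P/(f_1,\ldots,f_c)$ is false for general pure sci spaces (take $P=\Q[x]$ with $dz_1=dz_2=x^2$); purity only says $f_i\in P$, not that they form a regular sequence. This does not damage the argument, since smallness of $R\otimes_P M$ follows from $M\in\fg$ via Proposition~\ref{prop:fgregissmall} without that identification, but the statement should be removed. Second, the phrase ``$R/z_c/\cdots/z_1\simeq\Sigma^c R^e$ in $\Der(R^e)$'' does not parse: the $z_i$ live in $Z\Der(R)$, not $Z\Der(R^e)$. What you can say is that $M/z_1/\cdots/z_c\simeq K_c\otimes_R\cdots\otimes_R K_1\otimes_R M$ where $K_i=\mathrm{cone}(\chi_i)\in\Der(R^e)$, and then you must show $K_c\otimes_R\cdots\otimes_R K_1$ is small as an $R^e$-module; that is again where the lifting identification is needed.
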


In view of Theorem \ref{thm:sciisbci} and Remark \ref{rem:bimodisallmod}
we need only show that the maps of bimodules used in the constructions of
Section \ref{sec:sciisbci} all lift to elements of $Z\Der (R)$.
Specifically, this is Corollary \ref{cor:lifting}.

\subsection{Hochschild cohomology.}
\label{subsec:Hochschild}
It is convenient to adapt the algebraic notation for Hochschild cohomology
to cochain algebras.

\begin{notation}
If $X \to Y$ is a fibration of spaces, set
$$HH^*(X|Y):=\ext^*_{\chains^*(X \times_Y X)}(\chains^*(X),\chains^*(X)).$$
 Note that we consider $\chains^*(X)$ as a $\chains^*(X \times_Y X)$-module via the diagonal map $X \to X \times_Y X$.
\end{notation}

We will be applying this to sci spaces, and use the notation of
Section \ref{sec:sciisbci}:
$X=X_s$ is an sci space of codimension $s$, so that for $1\leq i \leq s$
we have  fibrations
 $S^{n_i} \to X_i \to X_{i-1}$ with $n_i$ odd and $X_0=KV$, where $V$ is
finite dimensional and even.

\begin{thm}
\label{thm:Hochschildcalculation}
If $X$ is a  {pure } sci space as above, the Hochschild cohomology is given by
$$HH^*(X|KV)=H^*(X)[[\zeta_1,...,\zeta_s]], $$
where the degree of $\zeta_i$ is $n_i-1$.
\end{thm}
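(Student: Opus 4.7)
My strategy is to use the pure Sullivan model to present $R := C^*(X\times_{KV}X)$ in a particularly simple form, and then to interpret the computation of $\ext^*_R(C^*(X),C^*(X))$ as an instance of Koszul duality for exterior algebras on odd generators. By Theorem~\ref{thm:sci} combined with the purity hypothesis, $X$ admits a Sullivan model
$$C^*(X) \simeq (\Lambda V \tensor \Lambda W, d),$$
with $V$ finite dimensional in even degrees, $W$ spanned by $w_1,\ldots,w_s$ in odd degrees $|w_i|=n_i$, and $d(V)=0$, $dw_i = f_i \in \Lambda V$. The fibration $X \lra KV$ is modeled by the CDGA inclusion $C^*(KV) = \Lambda V \hookrightarrow C^*(X)$, and the Eilenberg--Moore formalism of Section~\ref{sec:Qhtpy} gives the relative Sullivan model
$$R \simeq C^*(X)\tensor_{C^*(KV)}C^*(X) = \Lambda V \tensor \Lambda W \tensor \Lambda W',$$
where $W'$ is a second copy of $W$ with $dw_i' = f_i$.

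The pivotal observation is that $u_i := w_i - w_i'$ is closed, because $dw_i = dw_i' = f_i$. A change of basis from $\{w_i, w_i'\}$ to $\{w_i, u_i\}$ presents $R$ as the DGA $C^*(X) \tensor \Lambda U$, where $\Lambda U$ is the exterior algebra on the closed odd generators $u_1,\ldots,u_s$ with $|u_i|=n_i$ and trivial differential. The diagonal $\Delta^* : R \lra C^*(X)$ is now visibly the augmentation $u_i \mapsto 0$, so $C^*(X) \iso R/(u_1,\ldots,u_s)$ and the restricted $\Lambda U$-action on $C^*(X)$ factors through the augmentation $\Lambda U \lra \Q$.

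The Ext computation now follows from a standard change-of-rings argument. Since $R$ is free as a $\Lambda U$-module, the (derived) adjunction yields
$$\Hom_R(C^*(X),C^*(X)) \simeq \Hom_{\Lambda U}(\Q, C^*(X)),$$
and because $\Lambda U$ acts trivially on $C^*(X)$ this identifies with $C^*(X) \tensor_\Q \Hom_{\Lambda U}(\Q, \Q)$. The factorization $\Lambda U = \bigotimes_i \Lambda(u_i)$ reduces the inner Hom to $\bigotimes_i \Hom_{\Lambda(u_i)}(\Q, \Q)$. For each $\Lambda(u_i)$, the minimal resolution of $\Q$ is the periodic complex built from iterated multiplication by $u_i$; since $u_i$ acts as $0$ on $\Q$, the resulting Hom complex has zero differential, and one reads off $\Hom_{\Lambda(u_i)}(\Q, \Q) \simeq \Q[\zeta_i]$ with $|\zeta_i| = n_i - 1$ (matching the classical fact that $H^*(\Omega S^{n_i};\Q) = \Q[\zeta_i]$ for odd $n_i$). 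Since each $|\zeta_i|$ is even, the $\zeta_i$ commute strictly, and passage to cohomology gives
$$HH^*(X|KV) = H^*(X)\tensor_\Q \Q[\zeta_1,\ldots,\zeta_s] = H^*(X)[[\zeta_1,\ldots,\zeta_s]],$$
the formal power series notation reflecting that the Hom complex is naturally assembled as a product over the infinite resolution.

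The main obstacles are essentially bookkeeping: verifying that $R$ is genuinely $C^*(X)\tensor\Lambda U$ as a DGA (automatic from the Leibniz rule once $du_i=0$), that the change-of-rings identification holds in the DG derived category (using that $R$ is semifree over $\Lambda U$), and that the Yoneda product on $\ext^*_R(C^*(X),C^*(X))$ is compatible with the tensor-product ring structure on the right-hand side. Tracking the grading $|\zeta_i|=n_i-1$ carefully through the resolution shifts $\Sigma_{n_i}\Lambda(u_i)$, $\Sigma_{2n_i}\Lambda(u_i),\ldots$ is the only subtle point; the remainder of the argument is a direct tensor-product calculation.
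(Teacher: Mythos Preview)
Your argument is correct and is genuinely more direct than the paper's. The key simplification you exploit is that, under the purity hypothesis, \emph{all} of the elements $u_i = w_i - w_i'$ are simultaneously closed, so the single change of basis $\{w_i,w_i'\} \leadsto \{w_i,u_i\}$ splits $R \simeq C^*(X)\otimes_\Q \Lambda U$ as a CDGA with $\Lambda U$ formal. The Hochschild cohomology then reduces to Koszul duality for the exterior algebra $\Lambda U$, and your change-of-rings identification $\Hom_R(C^*(X),C^*(X)) \simeq \Hom_{\Lambda U}(\Q,C^*(X))$ is valid because $R$ is free over $\Lambda U$. The product-versus-sum issue in assembling the Hom from the infinite Koszul resolution is harmless here: in each fixed codegree only finitely many monomials $\zeta_1^{a_1}\cdots\zeta_s^{a_s}$ contribute (since $H^*(X)$ is bounded below and each $\zeta_i$ has positive codegree), which is precisely the paper's ``homogeneous completion'' remark.

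By contrast, the paper proceeds inductively via Proposition~\ref{prop:Hochschild}, peeling off one odd sphere at a time and proving $HH^*(X|Z)\cong HH^*(X|Y)[[\zeta]]$ through explicit cofibrant models, the auxiliary space $T$ of Lemma~\ref{lem:magicalT}, and a tower/inverse-limit argument. This is more laborious for the bare computation, but it buys something your approach does not immediately give: along the way the paper proves Proposition~\ref{prop:lifttoA}, which identifies each $\zeta_i$ with the specific bimodule self-map $\varphi$ produced in Section~\ref{sec:sciisbci}. That identification is the crucial input for Theorem~\ref{thm:sciiszci} (pure sci $\Rightarrow$ zci). Your Koszul-duality $\zeta_i$ are \emph{a priori} just abstract polynomial generators; to recover Corollary~\ref{cor:lifting} from your computation you would still need to check that they agree (up to units) with the maps coming from Proposition~\ref{prop:loopspherebuild}. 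So for Theorem~\ref{thm:Hochschildcalculation} as stated your route is cleaner, but the paper's route is doing double duty.
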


\begin{remark}
(i) The completion involved in forming the power series ring is
homogeneous, so that if $X$ is finite dimensional the ring is a polynomial
ring. Otherwise the formula is to be interpreted as formed by successive
adjunction of the variables in the stated order.

(ii) One would expect it to follow from Theorem
\ref{thm:Hochschildcalculation}
that the construction of
Section \ref{sec:sciisbci} can be upgraded to show $X$ is zci.
In any case, this upgrading of the construction is an ingredient in the
Hochschild cohomology calculation.
\end{remark}

The theorem evidently follows by repeated application of the following
general result about fibrations with fibre an odd sphere.

\begin{prop}
\label{prop:Hochschild}
Suppose given a fibration sequence $S^{2n+1}\lra Y \lra Z$, and a fibration $X \lra Y$.  Assuming (i) $\pi_*(Y) \lra \pi_*(Z)$ is surjective,
(ii) $\pi_*(X) \lra \pi_*(Y)$ is surjective, with kernel concentrated
in odd degrees,
and (iii) $\pi_*(X) $ is finite dimensional  {and $X$ is a pure space},
 we have
\[HH^*(X|Z)\cong HH^*(X|Y)[[\zeta]],\]
where $\zeta$ is of degree $2n$.
\end{prop}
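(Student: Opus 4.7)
The plan is to compute $HH^*(X|Y)$ and $HH^*(X|Z)$ directly via explicit Sullivan models and Koszul resolutions, exploiting the fact that the hypotheses force all relevant relative fibre generators to be of odd degree. By hypothesis (ii) together with the finite-dimensionality in (iii), the fibration $X \to Y$ admits a relative Sullivan model $C^*(Y) \sdr \Lambda(W)$ with $W$ finite-dimensional and concentrated in odd degrees. By (i), the relative Sullivan model of $Y \to Z$ has the form $C^*(Z) \sdr \Lambda(u)$ with $|u|=2n+1$ and $du \in C^*(Z)$. Combining these via the Sullivan pullback formula yields
\begin{align*}
C^*(X\times_Y X) &\simeq C^*(Y)\otimes\Lambda(W\oplus W'), \\
C^*(X\times_Z X) &\simeq C^*(Z)\otimes\Lambda(u,u')\otimes\Lambda(W\oplus W'),
\end{align*}
where primes denote a second copy of the fibre generators (with $du=du'$, and similarly for the $w_i$).

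The diagonal maps identify primed and unprimed generators, so their kernels are generated respectively by $(W-W')$ and by $(u-u',\,W-W')$, all odd-degree cycles (e.g.\ $d(u-u')=du-du'=0$). Since these are odd cycles, the standard Koszul resolutions of $C^*(X)$ are polynomial in even-degree dual variables:
\begin{align*}
R^Y &= C^*(X\times_Y X)\otimes\Q[t_1,\ldots,t_k], \\
R^Z &= C^*(X\times_Z X)\otimes\Q[s,t_1,\ldots,t_k],
\end{align*}
with $ds=u-u'$, $dt_i=w_i-w_i'$, $|s|=2n$, and $|t_i|=|w_i|-1$. Applying $\Hom(-,C^*(X))$ over the respective algebras and taking cohomology yields
\[HH^*(X|Y)\cong H^*(X)[[\zeta_1,\ldots,\zeta_k]], \qquad HH^*(X|Z)\cong H^*(X)[[\zeta,\zeta_1,\ldots,\zeta_k]],\]
where $\zeta=s^*$ is of degree $2n$ and $\zeta_i=t_i^*$ is of degree $|w_i|-1$. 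Comparing the two immediately yields the desired isomorphism $HH^*(X|Z)\cong HH^*(X|Y)[[\zeta]]$.

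The main technical obstacle will be verifying that these polynomial Koszul complexes are genuine cofibrant resolutions of $C^*(X)$. This reduces to checking Koszul-regularity of the sequence of odd cycles $(u-u',\,w_i-w_i')$ in the Sullivan algebra of $C^*(X\times_Z X)$, which one handles inductively via the single-element case: adjoining a polynomial generator $s$ with $ds=\xi$ (for $\xi$ an odd cycle) quasi-isomorphically kills $\xi$, because in the spectral sequence filtered by $s$-degree the $d_1$-differential is multiplication by $\xi$, whose square is zero, so the spectral sequence collapses onto the desired quotient. Purity of $X$ (from (iii)) provides the rigid Sullivan structure $\Lambda(V^{ev}\oplus V^{od})$ needed for these regularity checks to go through cleanly.
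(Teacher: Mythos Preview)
Your approach is valid but rather different from the paper's. You compute both $HH^*(X|Y)$ and $HH^*(X|Z)$ directly, by writing down explicit semifree resolutions of $C^*(X)$ over $C^*(X\times_Y X)$ and $C^*(X\times_Z X)$ (these are exactly the models $\tilde A$ and $\tilde{A^e_B}$ the paper sets up), observing that the twisting differentials $dt_i=w_{i,l}-w_{i,r}$ and $ds=x_l-x_r$ act as zero on $C^*(X)$ through the diagonal, and reading off both answers as power series rings over $H^*(X)$. In effect you are proving the global calculation (the paper's Theorem on $HH^*(X|KV)$) in one stroke and deducing the proposition as a corollary.

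The paper instead argues \emph{relatively}: it produces a cofibre sequence $F\to \tilde A \xrightarrow{\zeta} \Sigma_{2n}\tilde A$ of $A^e_C$-modules and, applying $\Hom_{A^e_C}(-,\tilde A)$ together with the base-change identification $\Hom_{A^e_C}(F,\tilde A)\cong \End_{\tilde{A^e_B}}(\tilde A)$, obtains a short exact sequence $\Sigma^{2n}R\xrightarrow{\zeta}R\to Q$ with $R=HH^*(X|Z)$, $Q=HH^*(X|Y)$. It then shows the tower $R\xleftarrow{\zeta}R\xleftarrow{\zeta}\cdots$ has vanishing homotopy limit (because the telescope $\tilde A\xrightarrow{\zeta}\Sigma_{2n}\tilde A\xrightarrow{\zeta}\cdots$ is contractible), and a $\lim/\lim^1$ argument gives $R\cong\varprojlim Q[\zeta]/(\zeta^n)=Q[[\zeta]]$. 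This is more circuitous, but it is modular (one sphere at a time) and it keeps track of the ring structure: the paper explicitly checks that the two multiplications on $Q$ agree and that $R/(\zeta^n)\cong Q[\zeta]/(\zeta^n)$ as \emph{rings}.

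That last point is the one place your argument is thin. You compute $\Hom_{A^e_B}(R^Y,C^*(X))$ and $\Hom_{A^e_C}(R^Z,C^*(X))$ as \emph{graded modules}, correctly getting $\prod_\alpha H^*(X)$, but the isomorphism $HH^*(X|Z)\cong HH^*(X|Y)[[\zeta]]$ is asserted as an isomorphism of rings, and the ring structure comes from composition in $\End$, not from anything visible in your $\Hom(-,C^*(X))$ description. You should either pass to $\End_{A^e_B}(R^Y)$ and identify the operators $\zeta_i$ as the explicit ``shift by $t_i$'' endomorphisms (which manifestly commute and generate over $H^*(X)$), or mimic the paper's device of exhibiting the ring map $Q[\zeta]\to R$ and checking it induces isomorphisms on $R/(\zeta^n)$. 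Your Koszul-regularity remarks are fine: purity is exactly what makes each $w_{i,l}-w_{i,r}$ a cycle and lets the contractible factors $\Lambda(w_i')\otimes\Q[t_i]$ split off.
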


This will be proved in Subsection \ref{subsec:Hochschildproof} below.

\subsection{Upgrading bimodule maps.}
Using the notation from Proposition \ref{prop:Hochschild},
we suppose given a fibration $S^{2n+1}\lra Y \lra Z$,
and a map $X \lra Y$. We write $A=\chains^*(X)$, $B=\chains^*(Y)$ and
$C=\chains^*(Z)$.

It is shown in Section \ref{sec:sciisbci}  (see Lemma \ref{lem:fib2} and
Proposition \ref{prop:loopspherebuild} ) that there is a cofibre
sequence of $A\otimes_C A$-modules
\[ A\otimes_C A \to A\otimes_B A \xrightarrow{\varphi}
\Sigma_{2n} A\otimes_B A.\]
To obtain an element of $Z\Der (R)$ we proceed as follows. For each
$A$-module $M$, we apply $-\otimes_A M$ to
obtain the cofibre sequence
\[ A\otimes_C M \to A\otimes_B M \xrightarrow{\varphi \otimes_A M} \Sigma_{2n}
A\otimes_B M.\]
To establish the zci condition, we must check it is natural for maps
of $A$-modules.

\begin{prop}
\label{prop:lifttoA}
There is a morphism
$\zeta:A \to \Sigma_{2n} A$ of $A\otimes_C A$-modules (i.e., an element
 $\zeta \in HH^{2n}(A|C)$) such that
\[ \zeta \otimes_B A \simeq \varphi.\]
\end{prop}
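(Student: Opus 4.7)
The plan is to construct $\zeta$ explicitly from a cofibrant resolution of $A$ as an $A\otimes_C A$-module arising from the pure Sullivan model of $X$, and then verify $\zeta\otimes_B A\simeq \varphi$ by matching two different Sullivan models of $A\otimes_B A$.

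First I would set up relative Sullivan algebras. The hypotheses yield a tower $C \hookrightarrow B = C\sdr\Lambda(u) \hookrightarrow A = B\sdr\Lambda(W)$ with $|u|=2n+1$ and $W$ concentrated in odd degrees; purity of $X$ forces $dW\subset \Lambda(V^{ev})\subset C$. Then $A\otimes_C A$ has model $C\otimes\Lambda(u_1,u_2,W_1,W_2)$, and the multiplication $A\otimes_C A\to A$ has kernel generated by the odd closed elements $u_2-u_1$ and $w_{2,j}-w_{1,j}$. Adjoining even polynomial generators $z_u$ of degree $2n$ with $dz_u=u_2-u_1$ and $z_{w_j}$ of degree $|w_j|-1$ with $dz_{w_j}=w_{2,j}-w_{1,j}$, one obtains a cofibrant $A\otimes_C A$-module resolution
\[ \tilde A := (A\otimes_C A)\sdr \Lambda(z_u,\{z_{w_j}\}) \xrightarrow{\;\sim\;} A. \]
The Sullivan extension is well-defined because purity ensures $d(w_{2,j}-w_{1,j})=0$ in $A\otimes_C A$; without purity, $dw_j$ could involve odd elements leading to correction terms. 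I would then define $\zeta:\tilde A \to \Sigma_{2n} A$ as the $A\otimes_C A$-linear map of degree $2n$ with $\zeta(z_u)=1\in A^0$ and $\zeta(z^I)=0$ on every other monomial. The chain-map condition reduces to the identities $(u_2-u_1)\cdot 1=0$ and $(w_{2,j}-w_{1,j})\cdot 1=0$ in $A$, both of which hold since $A$ is commutative and the $A\otimes_C A$-action is $(a_1\otimes a_2)\cdot a = a_1 a a_2$. This yields a class $\zeta\in HH^{2n}(A|C)$.

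The remaining and most delicate step is $\zeta\otimes_B A\simeq \varphi$ in $\Der(A\otimes_C A)$. The fibration $\Omega S^{2n+1}\to X\times_Y X\to X\times_Z X$ gives an identification
\[ A\otimes_B A \;\simeq\; (A\otimes_C A)\sdr \Lambda(z),\qquad |z|=2n,\qquad dz=u_2-u_1, \]
and under it the cofibre-sequence map $\varphi$ is (up to a nonzero scalar) the derivation $\partial_z: pz^k\mapsto kpz^{k-1}$. Applying $(-)\otimes_B A$ to $\tilde A$ produces a second cofibrant resolution of $A\otimes_B A$; matching the two resolutions identifies the variable $z_u$ with the Sullivan variable $z$ (both are even of degree $2n$ with the same differential $u_2-u_1$), while the variables $z_{w_j}$ become superfluous after base change, since they resolve the identifications $w_{1,j}=w_{2,j}$ that $A\otimes_B A$ does not impose. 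Under this matching, $\zeta\otimes_B A$ sends $z_u\mapsto 1$, exactly as $\varphi$ sends $z\mapsto 1$. The main obstacle is making the comparison of resolutions precise; this is where the purity of $X$ (keeping $\tilde A$ a simple Koszul resolution) and the odd dimension of the sphere $S^{2n+1}$ (so that $z$ is a polynomial, not exterior, generator) are essential.
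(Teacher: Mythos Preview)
Your approach is essentially the paper's: the resolution $\tilde A$ you build is exactly the paper's $\tilde A=(\Lambda(W\oplus\Q\{x_l,x_r,z\}\oplus U_l\oplus U_r\oplus \tilde U),d)$ with $dz=x_l-x_r$ and $d\tilde u=u_l-u_r$ (your $z_u,z_{w_j}$ are the paper's $z,\tilde u$), and your $\zeta$ is the paper's division-by-$z$ map $\tilde A\to\Sigma_{2n}\tilde A$ composed with the quasi-isomorphism $\tilde A\to A$. The paper additionally packages $\zeta$ geometrically via an auxiliary space $T=\fibre(X^e_Z\to S(U'))$ sitting in a fibration $X\to T\to S(\Q x)$ and a pullback square with $X^e_Y\to X^e_Z$, but algebraically this is the same resolution.

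One correction and one sharpening. First, $\varphi$ is the \emph{shift} $fz^q\mapsto fz^{q-1}$, not the derivation $\partial_z:pz^k\mapsto kpz^{k-1}$; these are distinct $A^e_C$-linear chain maps, and checking only that both send $z\mapsto 1$ does not show they are homotopic. Fortunately your $\zeta\otimes_B A$ actually produces the shift, so with the correct identification of $\varphi$ your argument goes through. Second, your final ``matching of resolutions'' is the step that needs the most care; the paper handles it cleanly by introducing a smaller model $\hat A=(\Lambda(W\oplus\Q\{x_l,x_r,z\}\oplus U_l),d)$, cofibrant over $\tilde B=(\Lambda(W\oplus\Q\{x_l,x_r,z\}),d)$, transporting $\zeta$ to an equivalent $\hat\zeta:\hat A\to\Sigma_{2n}\hat A$, and then observing the strict equality $\hat A\otimes_{\tilde B}\hat A=\tilde{A^e_B}$ with $\hat\zeta\otimes_{\tilde B}\hat A=\varphi$ on the nose.
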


We will prove the proposition in Subsection \ref{subsec:lifting}
below. For the present we just observe that it has the desired consequence.

\begin{cor}
\label{cor:lifting}
There is a natural transformation $z$ of the identity functor on $A$-modules
such that for every $A$-module $M$
\[z(A \otimes_B M) \simeq \varphi \otimes_A M.\]
\end{cor}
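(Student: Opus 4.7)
The plan is to deduce this corollary directly from Proposition \ref{prop:lifttoA} via the general mechanism, discussed in Subsection \ref{subsec:transcended}, that converts a bimodule endomorphism into an element of the center of the derived category. Specifically, take the bimodule map $\zeta : A \to \Sigma_{2n} A$ of $A\otimes_C A$-modules produced by Proposition \ref{prop:lifttoA}, regard it as a morphism of right $A$-modules, and for any (left) $A$-module $M$ define
$$z_M \;:=\; \zeta \otimes_A M \;:\; A\otimes_A M \lra \Sigma_{2n} A \otimes_A M.$$
Using the canonical equivalence $A\otimes_A M \simeq M$, this becomes a map $M \to \Sigma_{2n} M$. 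Naturality in $M$ is automatic from the functoriality of $(-)\otimes_A M$, and the fact that $\zeta$ is a map of $A\otimes_C A$-modules (not just right $A$-modules) is what ensures that the left $A$-action is respected, so that $z_M$ is indeed an $A$-linear endomorphism. This is precisely the ring map $HH^*(A|C) \to Z\Der(A)$ from Subsection \ref{subsec:transcended}.

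To verify the relation $z(A\otimes_B M) \simeq \varphi \otimes_A M$, I would specialize the construction above to $N := A\otimes_B M$ and exploit associativity of the derived tensor products along the maps $C \to B \to A$. Schematically,
$$z_{A\otimes_B M} \;=\; \zeta \otimes_A (A \otimes_B M) \;\simeq\; (\zeta \otimes_B A) \otimes_A M \;\simeq\; \varphi \otimes_A M,$$
where the first equivalence is a rearrangement of tensor products (using $A\otimes_A A \simeq A$ on the innermost factor) and the second is the identification $\zeta \otimes_B A \simeq \varphi$ furnished by Proposition \ref{prop:lifttoA}. Each of these equivalences is natural in $M$, so combining them gives the required natural identification.

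The main (and really only) obstacle is keeping the bookkeeping of left versus right actions and of the two different bimodule structures (over $C$ and over $B$) straight, so that the associativity rearrangement above is actually a rearrangement of derived tensor products and not just a nominal one. This is harmless here because $C \to B$ and $B \to A$ are modelled by relative Sullivan algebras, so the derived tensor products can be computed strictly with the Sullivan models used in Section \ref{sec:sciisbci}; in particular, $\zeta \otimes_A (A \otimes_B M)$ and $(\zeta \otimes_B A) \otimes_A M$ are represented by the same underlying complex, both carrying the map induced by $\zeta$ on the left-most $A$-factor. Once this is noted, the corollary follows immediately.
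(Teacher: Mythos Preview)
Your proposal is correct and follows essentially the same approach as the paper: define $z(M) = \zeta \otimes_A M$ and then rearrange tensor products to identify $z(A\otimes_B M)$ with $\varphi \otimes_A M$ via Proposition \ref{prop:lifttoA}. The paper's computation passes through the intermediate expression $\zeta \otimes_B M$ rather than jumping directly to $(\zeta \otimes_B A)\otimes_A M$, but this is the same associativity maneuver you describe, and your added remarks about Sullivan models justifying the derived rearrangements are a welcome elaboration of what the paper leaves implicit.
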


\begin{proof}
The natural transformation $z$ that $\zeta$ induces on $A$-modules is given
by $z(M)=\zeta \otimes_A M$. We easily verify this has the
required property:
\begin{align*}
z(A \otimes_B M) &= \zeta \otimes_A (A \otimes_B M) \\
& = \zeta \otimes_B M \\
& = (\zeta \otimes_B A) \otimes_A M \\
& = \varphi \otimes_B M.
\end{align*}
\end{proof}

This completes the proof of  Theorem \ref{thm:sciiszci}.

\subsection{Models for spaces.}
Using the notation of Proposition \ref{prop:Hochschild}, we work with a
fibration sequence
$S^{2n+1} \to Y \to Z$ and a fibration $X \lra Y$, and we
let $A$, $B$ and $C$ be minimal Sullivan models for $X$, $Y$ and $Z$
 respectively. More explicitly, we take models as follows.
\begin{enumerate}
    \item $C=(\Lambda W ,d)$ with $W $ finite dimensional.
    \item $B=(\Lambda(W \oplus \Q x_{2n+1}),d)$,  containing
$C$ as a sub-algebra. We denote $W  \oplus \Q x$ by $V$.
    \item $A=(\Lambda(V \oplus U),d)$, containing $B$ as a sub-algebra,
    with  $U$ concentrated in odd degrees and finite dimensional.
     {We also assume that $d(U)\subset \Lambda W$, this is possible
    because $X$ has a pure Sullivan model.}
\end{enumerate}
Let $X^e_Y=X \times_Y X$ and let $X^e_Z=X \times_Z X$. The cochain algebras $A^e_B=A\otimes_B A$ and $A^e_C=A \otimes_C A$ are minimal Sullivan models for $X^e_Y$ and $X^e_Z$. We can write these cochain algebras explicitly as well:
\begin{itemize}
    \item $A^e_B=(\Lambda(V\oplus U_l \oplus U_r),d)$ where $U_l=\{u_l|u\in U\}$ and $U_r=\{u_r|u\in U\}$. The differential $d$ is the obvious one satisfying 
         {$d(u_l)=d(u_r)=d(u)\in\Lambda W$ and so $u_l-u_r$ is always a cocycle}.
    \item $A^e_C=(\Lambda(W \oplus \Q\{x_l,x_r\} \oplus U_l\oplus U_r),d)$.
\end{itemize}

\begin{remark}
There are three important morphisms for $X^e_Y$:
\begin{enumerate}
\item $l:X^e_Y \to X$ which is mapping to the left component,
\item $r:X^e_Y \to X$ which is mapping to the right component and
\item $\Delta:X \to X^e_Y$ which is the diagonal.
\end{enumerate}
The algebraic counterparts of these maps are
 $l: A \to A^e_B$, $r:A \to A^e_B$ and $\Delta: A^e_B \to A$.
The morphism $l$ is the map to the left component of $A \otimes_B A=A^e_B$,
it is defined by $l(u)=u_l$ for $u\in U$ and $l(v)=v$ for $v\in V$.
The description of $r:A \to A^e_B$ is precisely similar.
The diagonal map $\Delta:A^e_B \to A$ is defined by
 {$\Delta(u_l)=\Delta(u_r)=u$ and $\Delta$ is the identity on $\Q x$ }
and $W $.
There are similar maps for $X^e_Z$ and $A^e_C$. Note that $A$ is a
$A^e_C$-module via the morphism $\Delta : A^e_C \lra A$.
\end{remark}

\subsection{Some useful fibrations.}
The following fibrations will be central to the proof.
\begin{lemma}
\label{lem: magical fibrations}
There are the following fibration sequences,
\begin{enumerate}
    \item $X^e_Y \stackrel{\Delta}\to X^e_Z \to S(\Q x)$.
    \item $X \stackrel{\Delta}\to X^e_Y \to S(U)$.
    \item $X \stackrel{\Delta}\to X^e_Z \to S(U\oplus \Q x)$.
\end{enumerate}
where the bases are products of odd spheres with the indicated homotopy
groups.
\end{lemma}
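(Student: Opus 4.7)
The plan is to realize each fibration as a relative Sullivan extension via a direct change of basis in the explicit Sullivan models written down just above the lemma, exploiting the purity assumption throughout. In each case I will identify a set of cocycles in the larger CDGA which generate a free sub-CDGA modeling the base, and then verify that quotienting gives the intended fiber.

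For (1), I would introduce $\xi := x_l - x_r \in A^e_C$. Since $x_l$ and $x_r$ are the two copies in $A^e_C = A \otimes_C A$ of the single generator $x \in B$, the differential satisfies $dx_l = dx_r \in \Lambda W$, so $\xi$ is a cocycle. Performing the change of basis $\{x_l, x_r\} \mapsto \{\xi, x_r\}$ and relabeling $x_r$ as $x$, the CDGA $A^e_C$ decomposes as $\Lambda(\Q\xi) \otimes A^e_B$ (the differential does not couple $\xi$ to anything). Thus the inclusion $\Lambda(\Q\xi) \hookrightarrow A^e_C$ is a relative Sullivan algebra with quotient $A^e_C \otimes_{\Lambda(\Q\xi)} \Q = A^e_B$, which realizes the desired fibration $X^e_Y \to X^e_Z \to S(\Q x)$.

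For (2), I would introduce $\xi_u := u_l - u_r \in A^e_B$ for each basis element $u \in U$. Here purity, $d(U) \subset \Lambda W$, yields $du_l = du_r = du \in \Lambda W$ (this was already noted after the description of $A^e_B$), so each $\xi_u$ is a cocycle. The same change of basis argument as in (1), now applied simultaneously to all $u \in U$, gives an isomorphism $A^e_B \cong \Lambda(\Q\xi_U) \otimes A$ of CDGAs, which models the fibration $X \to X^e_Y \to S(U)$. For (3), one combines the two constructions: introduce both $\xi = x_l - x_r$ and $\xi_u = u_l - u_r$ for each $u \in U$ (all cocycles, by the same reasoning) to obtain $A^e_C \cong \Lambda(\Q\xi \oplus \Q\xi_U) \otimes A$, giving the fibration $X \to X^e_Z \to S(\Q x \oplus U)$.

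The main obstacle is the cocycle condition on the $\xi_u$. Without the purity assumption, $du$ could involve elements of $U$ of lower degree, whose left and right copies remain distinct in $A^e_B$, so $\xi_u$ would fail to be closed and the whole approach would collapse. The purity hypothesis $d(U) \subset \Lambda W$ is precisely what makes this construction go through. Once the cocycle condition is secured, the splitting as a tensor product of CDGAs and the identification of the quotient are essentially formal, because no chosen $\xi$ appears in any differential.
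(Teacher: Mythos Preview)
Your proof is correct and follows essentially the same approach as the paper's: both arguments hinge on the observation that $x_l-x_r$ and the $u_l-u_r$ are cocycles (the latter requiring purity), and that these generate a free sub-CDGA modeling the base. The paper phrases it slightly differently---it defines the map $(\Lambda \Q x',0)\to A^e_C$ by $x'\mapsto x_l-x_r$, notes the composite with $\Delta$ is trivial, and then checks the induced map $A^e_C\otimes_{L_1}\Q\to A^e_B$ is an isomorphism on homotopy---whereas you make the tensor-product splitting explicit via change of basis; but the content is the same.
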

\begin{proof}
We reformulate the lemma algebraically: there are the following cofibration sequences of Sullivan algebras:
\begin{enumerate}
    \item $L_1=(\Lambda \Q x',0) \to A^e_C \xrightarrow{\Delta} A^e_B$,
 where $x'\mapsto x_l-x_r$.
    \item $L_2=(\Lambda U',0) \to A^e_B \xrightarrow{\Delta} A$,
 where $U' \cong U$ and $u' \mapsto u_l-u_r$.
    \item $L_1\otimes_{\Q}L_2=(\Lambda (\Q x' \oplus U'),0) \to A^e_C \xrightarrow{\Delta} A$.
\end{enumerate}
Since the composite $(\Lambda \Q x',0) \to A^e_C \xrightarrow{\Delta} A^e_B$ is the trivial morphism, there is a natural morphism $\epsilon:A^e_C \otimes_{L_1} \Q \to A^e_B$. It is easy to see that $\epsilon$ is an isomorphism on homotopy groups. The proof for the two other cofibration sequences is similar.
\end{proof}

We shall make two uses of these fibrations. The first use is to build relative
cofibrant models for our cochain algebras. An $A^e_C$-cofibrant model for
$A^e_B$ is
\begin{center}
$\tilde{A^e_B}=(\Lambda(W  \oplus \Q\{x_l,x_r,z_{2n}\} \oplus U_l\oplus U_r), d)$ where $dz=x_l-x_r$.
\end{center}
It is easy to see that the obvious morphism $\tilde{A^e_B} \to A^e_B$ is indeed a weak equivalence. Similarly a cofibrant model for $A$ over $\tilde{A^e_B}$ (and therefore also over $A^e_C$) is given by the formula
\begin{center}
$\tilde{A}=(\Lambda(W  \oplus \Q\{x_l,x_r,z_{2n}\} \oplus U_l\oplus U_r\oplus \tilde{U}), d)$ where $\tilde{U}=\Sigma_{1}U$ and $d(\tilde{u})=u_l-u_r$.
\end{center}

The second use of Lemma~\ref{lem: magical fibrations} is in defining a strange and useful space $T$.
\begin{lemma}
\label{lem:magicalT}
Let $T$ be the homotopy fibre of the map $X^e_Z \to S(U')$.
\begin{enumerate}
\item There is a fibration sequence $X \to T \to S(\Q x)$.
\item The following is a homotopy pullback square
\[\xymatrix{ X \ar[r]^{\Delta} \ar[d] & X^e_Y \ar[d]\\ T \ar[r] & X^e_Z}.\]
\end{enumerate}
\end{lemma}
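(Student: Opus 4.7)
The plan is in two steps corresponding to the two claims, and both rely on a preliminary identification. The key observation is that the map $X^e_Z \to S(U')$ defining $T$ factors as the composite
\[ X^e_Z \to S(U' \oplus \Q x') \to S(U'), \]
where the first arrow is the Sullivan fibration of Lemma~\ref{lem: magical fibrations}(3) and the second is the projection coming from the product decomposition $S(U' \oplus \Q x') \simeq S(U') \times S(\Q x')$ (a product of odd spheres). At the Sullivan-model level this factorization is immediate: the CDGA map $\Lambda U' \to A^e_C$ sending $u'\mapsto u_l-u_r$ factors through $\Lambda(U'\oplus\Q x')$.

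For part (1), I will pull back the fibration sequence $X \to X^e_Z \to S(U' \oplus \Q x')$ from Lemma~\ref{lem: magical fibrations}(3) along the fibre inclusion $S(\Q x') \hookrightarrow S(U' \oplus \Q x')$. Pullbacks of fibrations preserve the fibre, so this yields a fibration $X \to E \to S(\Q x')$ with $E$ the pullback. Using the factorization above, $E$ is the homotopy fibre of the composite $X^e_Z \to S(U')$, that is $E \simeq T$. Since $S(\Q x') \simeq S^{2n+1} \simeq S(\Q x)$, this produces the desired fibration $X \to T \to S(\Q x)$.

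For part (2), I will realize the square as the left half of the larger commutative diagram
\[ \xymatrix{X \ar[d] \ar[r]^{\Delta} & X^e_Y \ar[d]^{\Delta} \ar[r] & S(U') \ar[d]^= \\ T \ar[r] & X^e_Z \ar[r] & S(U')} \]
whose top row is the fibration sequence of Lemma~\ref{lem: magical fibrations}(2) and whose bottom row is the defining fibration sequence of $T$. The right square commutes because the two competing maps $X^e_Y \to S(U')$---the one directly from Lemma~\ref{lem: magical fibrations}(2) and the one obtained by composing $\Delta : X^e_Y \to X^e_Z$ with $X^e_Z \to S(U')$---both correspond to the CDGA map $\Lambda U' \to A^e_B$ sending $u' \mapsto u_l - u_r$. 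With the right-hand column an identity, the left square compares the homotopy fibres of two fibrations over a common base under that identity and is therefore a homotopy pullback.

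The main subtlety, and really the only real work, lies in this compatibility check: one must untangle the several maps named $\Delta$ in the paper and verify, via the explicit Sullivan models from the proof of Lemma~\ref{lem: magical fibrations}, that the two natural maps $X^e_Y \to S(U')$ agree. This step is routine once set up, but it is what makes both the pullback identification in (2) and the factorization underlying (1) work.
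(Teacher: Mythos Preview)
Your argument is correct and takes a genuinely different route from the paper's. The paper proceeds entirely at the level of explicit Sullivan models: it writes down a concrete $A^e_C$-cofibrant model
\[ F=(\Lambda(W \oplus \Q\{x_l,x_r\} \oplus U_l\oplus U_r\oplus \tilde{U}), d),\quad d(\tilde{u})=u_l-u_r, \]
for $T$, reads off the fibration $X\to T\to S(\Q x)$ directly from this model, and verifies (2) by computing the pushout $F\otimes_{A^e_C}\tilde{A^e_B}$ explicitly and identifying it with $\tilde{A}$. You instead argue topologically: for (1) you exploit the product splitting $S(U'\oplus\Q x')\simeq S(U')\times S(\Q x')$ and the pasting lemma for pullbacks to identify $T$ with the pullback of the fibration of Lemma~\ref{lem: magical fibrations}(3) along $\{*\}\times S(\Q x')$; for (2) you place the desired square as the fibre comparison in a map of fibrations over $S(U')$, which is automatically a homotopy pullback by pasting. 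The only model-level input you need is the compatibility check that both maps $X^e_Y\to S(U')$ are given by $u'\mapsto u_l-u_r$, which is immediate.

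Your approach is cleaner as a proof of the lemma in isolation. The paper's approach, however, is not gratuitous: the explicit model $F$ it constructs is used essentially in the subsequent arguments (the cofibre sequence $F\to\tilde{A}\xrightarrow{\zeta}\Sigma_{2n}\tilde{A}$ in the proof of Proposition~\ref{prop:lifttoA}, and the manipulation of $\Hom_{A^e_C}(F,\tilde{A})$ in the proof of Proposition~\ref{prop:Hochschild}). So if you adopt your proof, you would still need to produce $F$ separately before those later arguments.
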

\begin{proof}
An $A^e_C$-cofibrant cochain model for $T$ is given by the formula
\begin{center}
$F=(\Lambda(W \oplus Q\{x_l,x_r\} \oplus U_l\oplus U_r\oplus \tilde{U}), d)$
where $\tilde{U}=\Sigma_{1}U$ and $d(\tilde{u})=u_l-u_r$.
\end{center}
 {Note that $u_l-u_r$ is a cocycle because $d(U)\subset \Lambda W$. }
From this model the fibration sequence $X \to T \to S(\Q x)$ is evident.

Let $X'$ be the homotopy pullback of the diagram:
\[ \xymatrix{  & X^e_Y \ar[d]\\ T \ar[r] & X^e_Z.}\]
A cochain algebra model for $X'$ is $F \otimes_{A^e_C} \tilde{A^e_B} = (\Lambda(W \oplus Q\{x_l,x_r\} \oplus U_l\oplus U_r\oplus \tilde{U}\oplus\Q z_{2n}), d)$ which is clearly isomorphic to $\tilde{A}$. Moreover, the morphism $\tilde{A^e_B} \to F \otimes_{A^e_C} \tilde{A^e_B} \cong \tilde{A}$ is indeed the diagonal $\Delta$.
\end{proof}

\subsection{Lifting the map of bimodules.}
\label{subsec:lifting}
We can now prove Proposition \ref{prop:lifttoA}. Recall the cochain model $F$
for $T$ given in the proof of Lemma \ref{lem:magicalT}.
The fibration $X \to T \to S(\Q x)$ induces an exact sequence of $F$-modules
(and therefore also of $A^e_C$-modules):
\[ F \to \tilde{A} \xrightarrow{\zeta} \Sigma_{2n} \tilde{A}.\]
We will require an explicit description of $\zeta$. It is defined by
\begin{itemize}
\item $\zeta(fz^q)=fz^{q-1}$ and
\item $\zeta(f)=0$ if $f$ is not divisible by $z$.
\end{itemize}

Similarly, the fibration sequence $X^e_Y \to X^e_Z \to S(\Q x)$ gives rise to an exact sequence
\[ A^e_C \to \tilde{A^e_B} \xrightarrow{\varphi} \Sigma_{2n} \tilde{A^e_B}\]
of $A^e_C$-modules. An explicit description of $\varphi$ is given by
\begin{itemize}
\item $\varphi(fz^q)=fz^{q-1}$ and
\item $\varphi(f)=0$ if $f$ is not divisible by $z$.
\end{itemize}

As a cofibrant replacement of $B$ over $B \otimes_C B$, we take
$\tilde{B}=(\Lambda(W \oplus \Q\{x_l,x_r,z_{2n}\}),d)$ with $dz=x_l-x_r$. We can now prove the following proposition, which is an explicit cochain level
version of Proposition \ref{prop:lifttoA}.
\begin{prop} There is a natural equivalence
\[  \varphi \simeq  \zeta \otimes_{\tilde{B}} \tilde{A}\]
\end{prop}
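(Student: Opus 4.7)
The plan is to prove the proposition by an explicit cochain-level computation, identifying both maps as the ``divide by $z$'' operation on a common model, and reducing the equivalence to an identification of tensor products.

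First I will verify that $\tilde A$ is semi-free (hence cofibrant) as a $\tilde B$-module. Writing $\tilde A$ as a graded $\tilde B$-module, we have $\tilde A = \tilde B \otimes \Lambda(U_l \oplus U_r \oplus \tilde U)$, and the generators can be adjoined to $\tilde B$ in the order $U_l$, $U_r$, $\tilde U$: the differentials $d(U_l)$ and $d(U_r)$ lie in $\Lambda W \subseteq \tilde B$ by the purity assumption, and $d(\tilde u) = u_l - u_r$ lies in the sub-CDGA already built. Consequently $\tilde A \otimes_{\tilde B} \tilde A$ computes the derived tensor product, so via the quasi-isomorphisms $\tilde B \to B$ and $\tilde A \to A$ it models $A \otimes_B^{\mathbb L} A = A \otimes_B A = A^e_B$, the same homotopy type carried by $\tilde A^e_B$. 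I will construct an explicit CDGA quasi-isomorphism between these two models, with each ``extra'' copy of $U_l, U_r, \tilde U$ in $\tilde A \otimes_{\tilde B} \tilde A$ (one per tensor factor) eliminated via the acyclic pairs $(\tilde u^{(i)}, u_l^{(i)} - u_r^{(i)})$.

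Second, I will trace the ``divide by $z$'' operation through this identification. Both $\zeta$ and $\varphi$ are defined by the same explicit formula $f z^q \mapsto f z^{q-1}$ on their respective CDGAs, and both descend from a universal cofibre sequence of $B\otimes_C B$-modules
\[ B\otimes_C B \to \tilde B \xrightarrow{\xi} \Sigma_{2n} \tilde B \]
obtained by applying Proposition \ref{prop:loopspherebuild}(i) to the fibration $\Omega S^{2n+1} \to Y \to Y\times_Z Y$ (itself obtained via Lemma \ref{lem:fib1}). The maps $\varphi$ and $\zeta$ are obtained from $\xi$ by base-changing along $\tilde B \to \tilde A^e_B$ and $\tilde B \to \tilde A$ respectively, and the equivalence $\zeta \otimes_{\tilde B} \tilde A \simeq \varphi$ then reduces to the associativity of base change combined with the quasi-isomorphism $\tilde A \otimes_{\tilde B} \tilde A \simeq \tilde A^e_B$ from the first step. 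Naturality is automatic from the universal $\tilde B$-module origin of $\xi$.

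The main obstacle will be the bookkeeping in the first step: $\tilde A \otimes_{\tilde B} \tilde A$ strictly contains duplicated copies of $U_l, U_r, \tilde U$, and one must explicitly verify that the constructed quasi-isomorphism both commutes with the differentials and intertwines the ``divide by $z$'' maps, not merely identifies the underlying CDGAs. A further subtlety is that $\xi$ is strictly $B\otimes_C B$-linear but only $\tilde B$-linear up to homotopy, so care is needed when interpreting tensor products over $\tilde B$ in the derived sense; the purity hypothesis on $X$ is essential to ensure that the acyclic pairs exist and that the model-level identification succeeds.
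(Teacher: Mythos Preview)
Your approach is correct in outline and will succeed, but it is more laborious than the route the paper takes. The paper's proof consists of a single observation: rather than working with $\tilde{A}$, it introduces a \emph{smaller} cochain model $\hat{A}=(\Lambda(W\oplus\Q\{x_l,x_r,z\}\oplus U_l),d)$ for $X$, still cofibrant over $\tilde{B}$, carrying only \emph{one} copy of $U$. The ``divide by $z$'' map $\hat{\zeta}:\hat{A}\to\Sigma_{2n}\hat{A}$ is equivalent to $\zeta$, and now the tensor product $\hat{A}\otimes_{\tilde{B}}\hat{A}$ is \emph{literally} equal to $\tilde{A}^e_B$ (the second copy of $U_l$ becomes $U_r$), with $\hat{\zeta}\otimes\mathrm{id}$ literally equal to $\varphi$. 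No acyclic pairs, no quasi-isomorphism to construct, no bookkeeping.

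What you propose---computing $\tilde{A}\otimes_{\tilde{B}}\tilde{A}$, which carries redundant copies of $U_l\oplus U_r\oplus\tilde{U}$, and then building an explicit quasi-isomorphism to $\tilde{A}^e_B$ that intertwines the divide-by-$z$ maps---amounts to doing the same thing after first enlarging the model and then contracting it back down. Your second step (descent from a universal $\xi$ on $\tilde{B}$) is exactly what the paper records in the Remark immediately following the proof: $\varphi=A\otimes_B\psi\otimes_B A$. So the underlying idea is the same; the difference is purely in economy of model. The subtlety you flag about $\tilde{B}$-linearity of $\xi$ is real, and is one more reason the paper's strict equality via $\hat{A}$ is cleaner than tracking homotopies through your larger model.
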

\begin{proof}
We shall define a cochain algebra model $\hat{A}$ for $X$, which is cofibrant over $\tilde{B}$. Let $\hat{A}=(\Lambda(W  \oplus \Q\{x_l,x_r,z_{2n}\} \oplus U_l), d)$ where $dz=x_l-x_r$. The morphism $\zeta$ is equivalent to the obvious morphism $\hat{\zeta}:\hat{A} \to \Sigma_{2n} \hat{A}$. It is now easy to see there is an equality of morphisms of $A^e_C$-modules:
\[ \varphi = \hat{\zeta} \otimes_{\tilde{B}} \hat{A}\]
\end{proof}

\begin{remark}
Proposition \ref{prop:loopspherebuild}
shows that the fibration $S^{2n+1} \to Y \to Z$ yields an exact sequence:
\[B\otimes_C B \to \tilde{B} \xrightarrow{\psi} \Sigma_{2n} \tilde{B}.\]
It is easy to see that $\varphi = A \otimes_B \psi \otimes_B A$ (note that $\psi$ is a morphism of $B\otimes_C B$-modules, which justifies tensoring over $B$ on the left and right).
\end{remark}
\subsection{Proof of Proposition \ref{prop:Hochschild}}
\label{subsec:Hochschildproof}

Using our cofibrant models, we have explicit complexes
for calculating Hochschild cohomology:
$$HH^*(A|C)=H^*(\End_{A^e_C}(\tilde{A}))
\mbox{ and } HH^*(A|B)=H^*(\End_{\tilde{A^e_B}}(\tilde{A})).$$
In these terms, we may state Proposition \ref{prop:Hochschild}
more explicitly as follows.

\begin{prop}
\[ H_*(\End_{A^e_C}(\tilde{A})) = H_*(\End_{\tilde{A^e_B}}(\tilde{A}))[[\zeta]] \]
\end{prop}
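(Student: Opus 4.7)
The plan is to construct a chain-level quasi-isomorphism
$\Phi: \End_{\tilde{A^e_B}}(\tilde{A})[[\zeta]] \xrightarrow{\simeq} \End_{A^e_C}(\tilde{A})$,
defined on a formal power series $\sum_k \psi_k \zeta^k$ by the post-composition $\sum_k \psi_k \circ \zeta^k$; the proposition then follows on passing to cohomology.

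Step 1 (graded bijection): As graded $A^e_C$-modules, $\tilde{A} = A^e_C \otimes \Q[z] \otimes \Lambda \tilde{U}$ is free on the basis $\{z^k \tilde{u}\}$, while as a $\tilde{A^e_B}$-module it is free on $\{\tilde{u}\}$. Given an $A^e_C$-linear $\phi: \tilde{A} \to \tilde{A}$, one extracts $\tilde{A^e_B}$-linear "components" $\psi_k$ recursively by $\psi_0(\tilde{u}) := \phi(\tilde{u})$ and, for $k\geq 1$, $\psi_k(\tilde{u}) := \phi(\tilde{u} z^k) - z\,\phi(\tilde{u} z^{k-1})$. An easy induction gives $\phi(\tilde{u} z^l) = \sum_{k\leq l} z^{l-k} \psi_k(\tilde{u})$, which equals $(\sum_k \psi_k \circ \zeta^k)(\tilde{u} z^l)$, so $\Phi$ is a bijection of the underlying graded $\Q$-modules.

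Step 2 (differential compatibility): The compatibility $D\Phi = \Phi D$ reduces to showing that $\zeta$ is a chain-level cycle in $\End_{A^e_C}(\tilde{A})$, since then the Leibniz rule for the composition product yields $D(\psi \circ \zeta^k) = (D\psi) \circ \zeta^k$. The cycle property is transparent once one reads the powers $z^q$ as divided powers $z^{(q)}$ (a harmless identification over $\Q$): with $d z^{(q)} = (x_l - x_r)\, z^{(q-1)}$ the formula $\zeta(f z^{(q)}) = f z^{(q-1)}$ commutes with $d$ on the nose.

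Step 3 (passage to cohomology): Because all the relevant Sullivan algebras are generated in positive codegrees, $H^*(\End_{\tilde{A^e_B}}(\tilde{A})) = HH^*(A|B)$ is bounded below in codegree, so in each fixed total codegree only finitely many $\zeta$-powers contribute. Hence taking $H^*$ commutes with the $[[\zeta]]$-construction, and we conclude $H^*(\End_{A^e_C}(\tilde{A})) = H^*(\End_{\tilde{A^e_B}}(\tilde{A}))[[\zeta]]$.

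The main obstacle is the differential bookkeeping in Step 2: one must carefully track the cross-terms in $D\phi$ arising from $dz = x_l - x_r$ and $d\tilde{u} = u_l - u_r$, and show that they reassemble into $\sum_k (D\psi_k) \zeta^k$. This requires care with Koszul signs and with the implicit divided-power reading needed to make $\zeta$ a genuine chain-level cycle; a more conceptual but essentially equivalent route is to apply $R\Hom_{A^e_C}(-,\tilde{A})$ to the cofibration $F \to \tilde{A} \xrightarrow{\zeta} \Sigma_{2n}\tilde{A}$ of Lemma~\ref{lem:magicalT}, use the adjunction $\Hom_{A^e_C}(F,\tilde{A}) = \End_{\tilde{A^e_B}}(\tilde{A})$ coming from $\tilde{A} = F \otimes_{A^e_C} \tilde{A^e_B}$, and iterate the resulting long exact sequence.
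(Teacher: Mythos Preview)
Your chain-level argument is essentially sound and is a genuinely different route from the paper's. The paper works entirely in cohomology: writing $R=H^*(\End_{A^e_C}(\tilde{A}))$ and $Q=H^*(\End_{\tilde{A^e_B}}(\tilde{A}))$, it first extracts from the cofibration $F\to\tilde{A}\xrightarrow{\zeta}\Sigma_{2n}\tilde{A}$ (via the adjunction you mention in your closing remark) a short exact sequence $\Sigma^{2n}R\xrightarrow{\zeta}R\to Q$; then it proves that the tower $R\xleftarrow{\zeta}R\xleftarrow{\zeta}\cdots$ has vanishing $\ilim$ and $\ilim^1$ (because the telescope of $\zeta$ on $\tilde{A}$ is zero by boundedness), hence $R\cong\ilim_nR/(\zeta^n)$; finally it identifies $R/(\zeta^n)$ with $Q[\zeta]/(\zeta^n)$ inductively, using the multiplicative splitting $Q\hookrightarrow R$. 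Your approach instead writes down an explicit isomorphism of \emph{complexes} $\End_{\tilde{A^e_B}}(\tilde{A})[[\zeta]]\cong\End_{A^e_C}(\tilde{A})$, which is more concrete and avoids the $\ilim^1$ bookkeeping; the paper's approach is more conceptual and, crucially, gives the ring structure directly.

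That last point is a genuine gap in your argument. The statement is meant as an isomorphism of (graded-commutative) rings---this is what feeds into Theorem~\ref{thm:Hochschildcalculation}---and your $\Phi$ is \emph{not} multiplicative on the nose: for $\psi'\in\End_{\tilde{A^e_B}}(\tilde{A})$ one has $\zeta\psi'\neq\psi'\zeta$ in general (evaluate on a generator $\tilde{u}$ with $\psi'(\tilde{u})$ involving positive powers of $z$). So you have proved the additive isomorphism but not the ring isomorphism; to finish you would need to show $\Phi$ is multiplicative up to homotopy, or revert to the paper's method for the ring structure. A smaller quibble: your divided-power remark is more than a ``harmless identification''---with the paper's formula $\zeta(fz^q)=fz^{q-1}$ one checks $d\zeta\neq\zeta d$ already on $z^2$, so you are really \emph{replacing} $\zeta$ by the (homotopic) map $f\gamma_q(z)\mapsto f\gamma_{q-1}(z)$; this is legitimate over $\Q$ but should be said explicitly.
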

\begin{proof}
Let $R=H^*(\End_{A^e_C}(\tilde{A}))$ and let $Q=
H^*(\End_{\tilde{A^e_B}}(\tilde{A}))$.
Note that both $R$ and $Q$ are graded-commutative, because Hochschild cohomology is always graded-commutative.
To prove the proposition we need several ingredients. The first ingredient is a short exact sequence
\[ \Sigma^{2n} R \xrightarrow{\zeta} R \to Q\]
of $R$-modules.

Consider the morphism $F \xrightarrow{p} \tilde{A}$ of
Lemma \ref{lem:magicalT}.
Applying the functor $\Hom_{A^e_C}(-,\tilde{A})$ yields a morphism
$\End_{A^e_C}(\tilde{A}) \xrightarrow{p^*} \Hom_{A^e_C}(F,\tilde{A})$.
Since $\tilde{A}$ is a $\tilde{A^e_B}$-module there is an adjunction:
\[ \Hom_{A^e_C}(F,\tilde{A}) \cong \Hom_{\tilde{A^e_B}}(\tilde{A^e_B}
\otimes_{A^e_C} F,\tilde{A}) \cong \End_{\tilde{A^e_B}}(\tilde{A})\]
(the isomorphism $\tilde{A^e_B}\otimes_{A^e_C} F \cong \tilde{A}$ is Part 2
of Lemma~\ref{lem:magicalT}).
Thus $p^*$ is a map $\End_{A^e_C}(\tilde{A}) \to \End_{\tilde{A^e_B}}(\tilde{A})$.
On the other hand, we have the natural multiplicative change of rings map
$\iota: \End_{\tilde{A^e_B}}(\tilde{A}) \to \End_{A^e_C}(\tilde{A})$.
Using the explicit construction of internal Hom of DG-modules over a
CDGA, it is straightforward to verify that $p^*$ is left inverse to $\iota$.

Next, consider the short exact sequence $ F \to \tilde{A} \xrightarrow{\zeta} \Sigma_{2n} \tilde{A}$. Applying $\Hom_{A^e_C}(-,\tilde{A})$ to this sequence yields a distinguished triangle of left $\End_{A^e_C}(\tilde{A})$-modules
\[ \Sigma^{2n} \End_{A^e_C}(\tilde{A}) \xrightarrow{\zeta^*} \End_{A^e_C}(\tilde{A}) \xrightarrow{p^*} \End_{\tilde{A^e_B}}(\tilde{A}).\]
The morphism $\zeta^*:\Sigma^{2n} \End_{A^e_C}(\tilde{A}) \to \End_{A^e_C}(\tilde{A})$ is just composition with $\zeta$, i.e., right multiplication by $\zeta \in \End_{A^e_C}(\tilde{A})$. This distinguished triangle yields a long exact sequence of homology groups. Since $H_*(p^*)$ is an epimorphism,
 we have  a short exact sequence of graded left $R$-modules:
\[ \Sigma^{2n} R \xrightarrow{\zeta} R \xrightarrow{H_*(p^*)} Q.\]

Note that there are two multiplicative structure on $Q$: the usual one and
the one coming from $Q$ being a quotient of the graded ring $R$ by the ideal $(\zeta)$. These structures must coincide, because $H_*(\iota)$ is multiplicative and has a left inverse.

The second ingredient is that the homotopy inverse limit  of the tower
\[ \mathcal{T}:=\left[ R\xleftarrow{z} R \xleftarrow{z} \cdots \right] \]
is zero.
Because $A$ is zero in negative codegrees, the homotopy colimit
$\tilde{A}^\infty$ of
the telescope $\tilde{A} \xrightarrow{\zeta} \Sigma_{2n} \tilde{A} \xrightarrow{\zeta} \cdots$ is zero.
Applying $\Hom_{A^e_C}(-,\tilde{A})$ to this telescope gives a tower
\[ \End_{A^e_C}(\tilde{A}) \xleftarrow{\zeta^*} \Sigma^{2n}\End_{A^e_C}(\tilde{A}) \xleftarrow{\zeta^*} \cdots\]
Its homotopy inverse limit, $\Hom_{A^e_C}(\tilde{A}^\infty,\tilde{A})$
is therefore also zero. By the Milnor exact sequence,
$\ilim \mathcal{T}=0$ and $R^1 \ilim \mathcal{T}=0$.

Next, consider the two towers:
\begin{enumerate}
\item $\mathcal{U}=\left[ R \xleftarrow{=} R \xleftarrow{=} \cdots \right],$
 and
\item $\mathcal{V}=\left[
Q=R/(\zeta) \leftarrow R/(\zeta^2) \leftarrow R/(\zeta^3) \leftarrow \cdots
\right]$.
\end{enumerate}
There is a short exact sequence of towers $0 \to \mathcal{T} \to \mathcal{U} \to \mathcal{V} \to 0$, given by:
\[ \xymatrix{
R \ar[d]^\zeta & R \ar[l]_{\zeta} \ar[d]^{\zeta^2} & R \ar[l]_\zeta \ar[d]^{\zeta^3} & \ar[l]_\zeta {\cdots} \\
R \ar[d] & R \ar[d] \ar[l]_{=} & R \ar[d] \ar[l]_{=} & \ar[l]_{=} {\cdots} \\
Q & {R/(\zeta^2)} \ar[l] & {R/(\zeta^3)} \ar[l] & \ar[l] {\cdots} } \]
The six term exact sequence:
\[ 0 \to \ilim \mathcal{T} \to \ilim \mathcal{U} \to \ilim \mathcal{V}
\to R^1\ilim \mathcal{T} \to R^1\ilim \mathcal{U} \to R^1\ilim \mathcal{V} \to 0\]
shows that $R$ (which is isomorphic to $\ilim \mathcal{U}$) is isomorphic to $\ilim \mathcal{V}$.

To complete the proof we need to show that $R/(\zeta^n)$ is isomorphic to the truncated
polynomial ring $Q[\zeta]/(\zeta^n)$. Observe that $(\zeta^n)/(\zeta^{n+1})$ is
isomorphic to $Q$ as an $R$-module, and that there is a subalgebra
$Q[\zeta] \subseteq R$. These facts yield a morphism of short exact sequences:
\[ \xymatrix{
Q\ar[d]^{\cong} \ar[r] & {Q[\zeta]/(\zeta^{n+1})} \ar[r] \ar[d] & {Q[\zeta]/(\zeta^{n})} \ar[d] \\
{(\zeta^n)/(\zeta^{n+1})} \ar[r] & {R/(\zeta^{n+1})} \ar[r] & {R/(\zeta^{n})} }\]
Since $R/(\zeta)\cong Q$ we get an inductive argument showing that $R/(\zeta^n)\cong Q[\zeta]/(\zeta^n)$ as rings.

Therefore
\[ R \cong \ilim Q[\zeta]/(\zeta^n) = Q[[\zeta]],\]
as required.
\end{proof}

\section{Polynomial growth implies spherical extension.}
\label{sec:gciissci}

The purpose of the present section is to complete the loop of implications
and prove the following theorem.

\begin{thm}
\label{thm:gciissci}
If $X$ is a gci space, it is also sci.
\end{thm}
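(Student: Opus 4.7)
The plan is to reduce the theorem to a structural statement about elliptic spaces via Noether normalization, and then to manipulate the minimal Sullivan model to peel off even generators one by one into the base.

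To begin, I would apply Proposition 3.2 (Noether normalization) to the gci space $X$ to obtain a fibration $F \to X \to KV_0$ where $V_0$ is an even, finite-dimensional graded vector space and $H^*(F)$ is finite-dimensional. Combined with the gci assumption that $\pi_*(X)$ is finite-dimensional and the fact that $\pi_*(KV_0) = V_0$ is finite-dimensional, the long exact sequence in homotopy forces $\pi_*(F)$ to be finite-dimensional as well. Hence $F$ is a rationally elliptic space. By Corollary~\ref{cor:scirearrangement}, if $F$ itself can be shown to be sci, then so is $X$, so it suffices to prove the statement for the elliptic space $F$.

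For the elliptic case, take a minimal Sullivan model $(\Lambda V, d)$ of $F$ with $V$ finite-dimensional. I would proceed by induction on $\dim V^{even}$. The base case $\dim V^{even} = 0$ is immediate: the homotopy of $F$ is concentrated in odd degrees, so $F$ trivially has the standard sci form with $KW = \mathrm{pt}$. For the inductive step, the key claim is that some $x \in V^{even}$ can be chosen to be a cocycle in $(\Lambda V, d)$. Granting this, the inclusion $(\Lambda x, 0) \hookrightarrow (\Lambda V, d)$ models a map $F \to K(\Q, |x|)$ with homotopy fibre $F''$ modeled by $(\Lambda V / (x), d)$, a minimal Sullivan algebra with smaller $\dim V^{even}$. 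One verifies that $F''$ remains elliptic, so by induction $F''$ is sci, and Corollary~\ref{cor:scirearrangement} applied to $F'' \to F \to K(\Q, |x|)$ gives that $F$ is sci. Iterating until all even generators have been absorbed into the base and then reapplying Corollary~\ref{cor:scirearrangement} to $F \to X \to KV_0$ yields that $X$ is sci, with the sci codimension equal to $\dim \pi_{odd}(X) = \dim V^{odd}$, matching the g-codimension as required.

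The main obstacle lies in the key claim: producing an even cocycle in the minimal model whenever $V^{even} \neq 0$. For the lowest codegree even generator $x \in V^{2m}$, a degree count on $\Lambda^{\geq 2}V$ often forces $dx = 0$ automatically, since decomposable terms of codegree $2m+1$ of the right parity generically do not exist. When such terms do exist (using low-codegree odd generators), one must exploit the ellipticity of $F$ — finite-dimensionality of both $V$ and $H^*(F)$ — to verify that either $dx$ is forced to vanish by the $d^2 = 0$ relation combined with the impossibility of killing the resulting obstructions in the polynomial part of $\Lambda V$, or that a suitable decomposable correction $\alpha$ of codegree $2m$ makes $x' = x - \alpha$ a cocycle with unchanged linear part. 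This is where the Noetherian hypothesis is genuinely used (through the elliptic fibre), and is the technical heart of the theorem.
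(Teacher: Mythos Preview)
Your overall architecture --- peel off even generators into an Eilenberg--MacLane base and invoke Corollary~\ref{cor:scirearrangement} --- is sound and close in spirit to the paper's, and the preliminary reduction to the elliptic Noether fibre is a legitimate simplification (the paper inducts directly on $\dimQ \pi_*(X)$ without this reduction, treating odd and even top degree separately).

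The gap is your key claim. You assert that in an elliptic minimal model $(\Lambda V,d)$ with $V^{even}\neq 0$ some even generator can be corrected to a cocycle, but your sketch does not establish this: degree-counting on the lowest even generator $x\in V^{2m}$ tells you only that $dx$ lies in the odd-word-length part of $\Lambda(V^{odd,<2m})$, and nothing you wrote forces it to be a coboundary there. The appeal to ``$d^2=0$ combined with the impossibility of killing obstructions in the polynomial part'' is not an argument, and this claim is precisely the content of the theorem. The paper's proof of it is not a Sullivan-model manipulation. It works with the \emph{top} homotopy group rather than the lowest even one: if that group lies in odd degree, an odd sphere is peeled off and the Noetherian condition is checked via a Serre spectral sequence; if it lies in even degree $2n$, Proposition~\ref{prop:NoethHurepi} shows $h^\vee\colon H^{2n}(X)\to\pi_{2n}^\vee(X)$ is surjective by computing $\cell_{\Q}^{C^*(X)}(\Q[x])$ in two incompatible ways --- once via the local-cohomology spectral sequence (which, assuming $h^\vee$ misses $x$, collapses by Lemma~\ref{lem: Zero on homology map} to give cohomology only in non-negative codegrees), and once via a cellularization base-change lemma reducing to $\Q[x]$ over itself (which gives cohomology in negative codegrees). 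That argument, not combinatorics on the minimal model, is where the Noetherian hypothesis is actually consumed.
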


This states that a finiteness condition (Noetherian cohomology
and finite homotopy) implies that a space has a particular form
(fibration $F \lra X \lra KV$, where $\pi_*(F)$ is in odd degrees)
and is therefore perhaps the most interesting step.

\subsection{Strategy.}
\label{subsec:gciisscistrategy}
Assume $X$ is gci. By the Milnor-Moore theorem,
$\pi_*(\Omega X)$ is finite dimensional, and in
particular its even part is finite dimensional.

We argue by induction on $\dimQ (\pi_*(X))$. The result is
trivial if $\pi_*(X)=0$, and the inductive step will be to remove
the top homotopy group and retain the Noetherian condition. Suppose
then that the top non-zero homotopy is in degree $s$ and that
$0\neq x\in \pi_s^{\vee}(X)$.

If $s=2n-1$ is odd, killing homotopy groups gives a fibration
$$S^{2n-1} \lra X \lra X',  $$
with $\dimQ(\pi_*(X'))=\dimQ (\pi_*(X))-1$.
In the rational setting, the fibration is principal, so there
is also a fibration
$$X \lra X'\lra K(\Q, 2n-2),  $$
and we may use the Serre spectral sequence to deduce that
$H^*(X')$ is Noetherian. Thus $X'$ is gci, and by induction we
conclude it is also sci. The fibration displays $X$ as being
sci as required.

If $s=2n$ is even, killing homotopy groups gives a fibration,
$K(\Q, 2n) \lra X \lra Y$, but this is not of use to us.
We will argue, heavily using the fact that $H^*(X)$ is Noetherian,
that in fact the element $x$ is in the image of the dual Hurewicz map.
Accordingly there is another fibration
$$X' \lra X \lra K(\Q, 2n)$$
where $\dimQ (\pi_*(X'))= \dimQ (\pi_*(X))-1 $.
Applying the Serre spectral sequence to the fibration
$$S^{2n-1} \lra X'\lra X,$$
we see that $H^*(X')$ is Noetherian. By induction we conclude $X'$ is sci,
and from Corollary \ref{cor:scirearrangement} it follows that $X$ is sci.

\subsection{The dual Hurewicz map.}
In rational homotopy it is natural to dualize the Hurewicz map
$$h:\pi_n(X) \lra H_n(X)$$
and concentrate on the dual Hurewicz map
\[ h^{\vee}:H^*(X) =H_*(X)^{\vee}\to \pi_*(X)^\vee.\]
In fact, the dual Hurewicz map $h^{\vee}$ must be zero on decomposable elements
of $H^*X$, and so it yields a map from the indecomposable quotient of
$H^*(X)$ to $\pi_*(X)^\vee$.

If $(\Lambda V,d)$ is a minimal Sullivan model for $X$ then this dual
Hurewicz map is  the linear map
\[ h^{\vee}: H^*(\Lambda V,d) \to V\]
that  comes from dividing $(\Lambda V,d)$ by the sub-cochain complex
$(\Lambda^{\geq 2} V,d)$.
An element $x \in V$ is in the image of $h^{\vee}$ if and only if
there is a $g\in \Lambda^{\geq 2} V$ such that $d(x+g)=0$.

\subsection{The dual Hurewicz map and the Noetherian condition.}

It is immediate from Theorem \ref{thm:sci} that if $X$ is sci then $h^{\vee}$
is an epimorphism in even degrees. The critical step in showing that gci
implies sci is to prove a special case of this surjectivity
holds for gci spaces. We are grateful to S.Iyengar for pointing out that
a corresponding result with a very different proof appears as a crucial lemma
in \cite{DVP}.

\begin{prop}
\label{prop:NoethHurepi}
Suppose $X$ is a rational space with finite dimensional homotopy
and that the  top degree in which homotopy is nonzero is  $2n$. If $H^*(X)$ is
Noetherian then the dual Hurewicz map
$$h^{\vee}: H^{2n}(X)\lra \pi_{2n}^{\vee}(X)$$
is surjective.
\end{prop}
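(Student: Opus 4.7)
The plan is to combine Noether normalization of $H^*(X)$ with the classical Friedlander--Halperin theorem that, in any non-trivial simply connected elliptic minimal Sullivan algebra, the top-degree generator is in odd degree. Fix a minimal Sullivan model $(\Lambda V,d)$ of $X$ and set $U=V^{2n}$.

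First I would set up the Noether normalization carefully. In a minimal Sullivan model the dual Hurewicz factors through an injection $H^*(X)/\mathrm{dec}\hookrightarrow V$, so every indecomposable class of $H^*(X)$ sits in degree $\leq 2n$; one may therefore choose polynomial generators $y_1,\dots,y_r\in H^{\mathrm{ev},\leq 2n}(X)$ over which $H^*(X)$ is a finitely generated module. This gives a fibration $F\to X\to KV_0$ with $V_0$ evenly graded in degrees $\leq 2n$ and $H^*(F)$ finite-dimensional. The long exact sequence in rational homotopy, together with the finiteness of $\pi_*(X)$ (gci hypothesis) and of $\pi_*(KV_0)=V_0^{\vee}$, forces $\pi_*(F)$ to be finite-dimensional, so $F$ is elliptic.

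The decisive step is to show $\pi_{2n}(F)=0$. Suppose not; then $F$ is non-contractible, and by Friedlander--Halperin the top non-vanishing degree of $\pi_*(F)$ is an odd integer $2m+1\geq 2n+1$. The long exact sequence of the Noether fibration at degree $2m+1$, combined with $\pi_{2m+1}(X)=0$ (top of $\pi_*(X)$ is $2n$) and $\pi_{2m+1}(KV_0)=0$ (since $V_0$ is evenly graded), yields a surjection $\pi_{2m+2}(KV_0)\twoheadrightarrow\pi_{2m+1}(F)$. But $V_0$ lies in degrees $\leq 2n<2m+2$, so $\pi_{2m+2}(KV_0)=(V_0^{2m+2})^{\vee}=0$, contradicting $\pi_{2m+1}(F)\neq 0$. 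Hence $\pi_{2n}(F)=0$.

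Finally, the long exact sequence at $k=2n$, using $\pi_{2n+1}(KV_0)=0$ and $\pi_{2n}(F)=0$, gives an injection $\pi_{2n}(X)\hookrightarrow\pi_{2n}(KV_0)$, which dualises to a surjection $V_0^{2n}\twoheadrightarrow U$. Each element of $V_0$ can be identified inside $V$ with a $d$-closed Sullivan generator whose linear part is itself, so $h^{\vee}$ sends the cohomology class of such a generator to its image in $U$; hence the image of $h^{\vee}$ contains all of $U$, and $h^{\vee}$ is surjective. The main obstacle is arranging the Noether normalization to sit in degrees $\leq 2n$, which rests on the injectivity of the dual Hurewicz on indecomposables for minimal Sullivan models; once that standard ingredient is in place, the long exact sequence together with Friedlander--Halperin does the remaining work.
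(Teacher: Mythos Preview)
Your strategy is genuinely different from the paper's, and the final two steps (Friedlander--Halperin forces the top of $\pi_*(F)$ to be odd, and then the long exact sequence yields surjectivity of $h^{\vee}$) are correct.  The paper instead argues by contradiction using cellularization: it considers the fibration $K(\Q,2n)\to X\to Y$ killing the top homotopy, and computes $\cell_\Q^{C^*(X)}(C^*(K(\Q,2n)))$ in two incompatible ways via the local cohomology spectral sequence and via independence of base.

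However, your first step contains a genuine gap.  The assertion that $h^{\vee}$ induces an injection $H^*(X)/\mathrm{dec}\hookrightarrow V$ is false in general, even under the hypotheses of the proposition.  Take the minimal Sullivan algebra $(\Lambda(a_3,b_3,c_5,e_6),\ dc=ab,\ de=0)$.  Here $\pi_*(X)$ is finite with top degree $2n=6$, and $H^*(X)\cong H^*(\Lambda(a,b,c),dc=ab)\otimes\Q[e]$ is Noetherian.  The class $[ac]\in H^8$ satisfies $h^{\vee}([ac])=0$ since $ac\in\Lambda^{\geq 2}V$, yet $[ac]$ is indecomposable: one checks $H^5=0$ and $H^4=0$, so $(H^+)^2\cap H^8=0$.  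Thus $H^*(X)$ has indecomposables in degree $8>2n$, and your justification for choosing the Noether normalization in degrees $\leq 2n$ collapses.

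In this particular example a Noether normalization in degree $\leq 6$ does exist (namely $\Q[e]$), so the conclusion you need may still hold.  But establishing it in general --- that a gci space always admits a Noether normalization with polynomial generators in degrees at most the top of $\pi_*(X)$ --- is not obvious, and seems to require an argument of comparable depth to the proposition itself.  Without it, Case~2 of your argument (where $2m+1>2n$) does not yield a contradiction, since $V_0^{2m+2}$ could be nonzero.
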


\begin{proof}
 By killing homotopy groups, there is a fibration sequence
\[ Y \leftarrow X \stackrel{\Phi}\leftarrow K(\Q,2n)\]
so that $\Phi$ a monomorphism in homotopy.
We suppose $S=(\Lambda W,d)$ is a minimal Sullivan model for $Y$,
and  $R=(\Lambda V,d)$ is a minimal Sullivan model for $X$, where
$V=W\oplus\Q x$. We can take $Q=(\Lambda x,0)$ as a Sullivan model
for $K(\Q,2n)$ with $\Phi:R \to Q$ being the obvious map, so that
\[ S \rightarrow R \stackrel{\Phi}\rightarrow Q \simeq R\otimes_S \Q .\]
provides an algebraic model of the fibration.

To conclude, we apply the following lemma.


\begin{lemma}
\label{lem: Zero on homology map}
If $H^*(\Phi)$  is non-trivial
then $x$ is in the image of the dual Hurewicz map.
\end{lemma}
\begin{proof}
Suppose
$$x^n\in \mathrm{im} (H^*(X)\rightarrow H^*(K(\Q,n))=\Q[x]).$$
This implies there  is a cocycle in $R$ of the form:
\[ x^n + f_1 x^{n-1} + f_2 x^{n-2} +\cdots + f_n \]
with $f_i \in \Lambda W$. The differential of this cocycle is
\[ (ndx+df_1)x^{n-1} + \mbox{ [ terms of lower degree in $x$ ]} =0,\]
which implies $ndx+df_1=0$. Hence there is an element $g\in R$ such that
$x+g$ is a cocycle in $R$. The element $x+g$ cannot be a coboundary
because $R$ is  minimal, and hence $x$ is in the image of the dual Hurewicz map.
\end{proof}

If $H^*(X)$ is Noetherian,  then by \cite[9.3]{DGI1},
the stable Koszul complex can be used to construct the $\Q$-cellularization
and  there is a local cohomology spectral sequence
\[ H^{-p}_I(H^*(Q))_q \ \Rightarrow \ H_{p+q}(\cell^R_{\Q} (Q)), \]
where $Q$ is considered as an $R$-module via $\Phi$.

We now suppose $x$ is not in the image of the dual Hurewicz map and
 deduce two contradictory statements about $\cell^Q_{\Q}(Q)$. First,
Lemma~\ref{lem: Zero on homology map} implies that $H^*(\Phi)$ is trivial and
hence the spectral sequence collapses at the $E^2$-page to show
\[ \cell_{\Q}^R(Q)\simeq Q. \]
In particular $\cell_{\Q}^R(Q)$ has cohomology only in codegrees
$\geq 0$.

On the other hand,  if we assume $x$ is not in the image of
the dual Hurewicz map,  we will see that the cohomology of $\cell_{\Q}^R(Q)$
must be quite different.

\begin{lemma}
\label{lem:splitsthrough}
If $K(\Q, 2n) \lra X \lra Y$ is a fibration sequence killing the top
homotopy group of $X$ then $C^*(\Omega Y)$ is $\Q$-cellular as
a module over $C^*(K(\Q, 2n))$.


\end{lemma}

\begin{proof}
Since the homotopy groups of $\Omega Y $ are concentrated in degrees less
than $2n-1$ and $\Omega Y$ is a product of Eilenberg-MacLane spaces,
the connecting map  $\Omega Y \lra K(\Q, 2n)$ is null. It follows that
 the induced map in cohomology
$\Q [x]=H^*(K(\Q, 2n))\lra H^*(\Omega Y)$ factors through $\Q$, and hence
that $C^*(\Omega Y)$ is $\Q$-cellular as required.
%
\end{proof}

The third author has identified two elementary but very useful
base change results for cellularization.
\begin{lemma}{\bf (Independence of base \cite[3.1]{Shamir})}
Suppose $R\lra S$ is a map of rings.

(i) {\bf (Strong)}
If $B$ is an $S$-module and $S\tensor_RB$ is $B$-cellular over $S$,
then for any $S$-module $Y$
$$\cell_{B}^RY \simeq \cell_{B}^SY. $$

(ii){\bf (Weak)}
If $A$ is an $R$-module and $S\tensor_RA$ is $A$-cellular over $R$,
then for any $S$-module $Y$
$$\cell_{A}^RY \simeq \cell_{S\tensor_R A}^SY. \qqed$$
\end{lemma}

It follows from Lemma \ref{lem:splitsthrough}
that $Q\otimes_R \Q$ is $\Q$-cellular as an $Q$-module.
We may therefore apply  the Strong Independence of Base property
to the map $R \to Q$ with $B=\Q$ and conclude
\[ \cell_{\Q}^R (Q) \simeq \cell_{\Q}^Q (Q).\]
Since $H^*(Q)=\Q [x]$, we can easily compute
$$H^*(\cell_{\Q}^Q (Q))=\Sigma H^1_{I}(\Q[x])=\Sigma \Q [x]^{\vee}$$
using the spectral sequence above.
In particular $\cell_\Q^Q Q$ has cohomology in negative
codegrees. It is therefore not equivalent to $Q$, and the assumption
that $x$ is not in the image of the dual of the Hurewicz map leads to a
contradiction.

This completes the proof of the proposition.
\end{proof}

This completes the proof of Theorem \ref{thm:gciissci}.
\qqed

\section{Examples.}
\label{sec:examples}
It is quite easy to construct examples in rational homotopy theory,
so we can see that various classes are distinct.

\subsection{Homotopy invariant notions and cohomology
rings.}

We may impose a homotopy invariant condition on a space $X$ or
a conventional condition on the cohomology ring $H^*(X)$.

In the regular case there is no distinction by
Theorem \ref{thm:hregisKV},  since
rational graded connected commutative rings are regular if and only
if they are polynomial on even degree generators.

In the ci case the homotopy invariant notion is strictly
weaker than the notion for cohomology rings.
Indeed, we show in Proposition \ref{prop:ciishci}
that if $H^*(X)$ is ci then $X$ is sci. On the other hand,
 Example \ref{eg:hGornotGor} gives a  {pure } sci
space whose cohomology ring is not even Gorenstein.

\begin{prop}
\label{prop:ciishci}
If $H^*(X)$ is a complete intersection, then $X$ is formal, and
there is a fibration
$$S^{m_1}\times \cdots \times S^{m_c} \lra X \lra KV$$
with $m_1,m_2, \ldots , m_c$ odd. In particular, $X$ is also sci.
\end{prop}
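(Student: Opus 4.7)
The plan is to realize the complete intersection presentation of $H^*(X)$ by an explicit Sullivan algebra and then transport this structure to $C^*(X)$ using the intrinsic formality of polynomial rings. Unpacking the hypothesis, $H^*(X)\cong P/(f_1,\ldots,f_c)$ for a polynomial ring $P=\Lambda V$ on a finite-dimensional, evenly-graded vector space $V$ and a regular sequence $f_1,\ldots,f_c$; since $P$ is concentrated in even codegrees, so are the $f_i$. I would then build the Koszul-type CDGA $K = P\sdr \Lambda(y_1,\ldots,y_c)$ with $|y_i|=|f_i|-1$ (odd) and $dy_i=f_i$. Because the $f_i$ form a regular sequence, the obvious map $K\lra H^*(X)$ given by $y_i\mapsto 0$ and $p\mapsto \bar p$ is a CDGA quasi-isomorphism, so $K\simeq H^*(X)$ as CDGAs.

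Next, I would invoke the intrinsic formality of $P$: the quotient $P\lra H^*(X)$ lifts to a CDGA map $P = C^*(KV)\lra C^*(X)$. Each $f_i$ maps to a cocycle whose cohomology class is zero, so one may choose $\alpha_i\in C^*(X)$ with $d\alpha_i$ equal to the image of $f_i$. Extending by $y_i\mapsto \alpha_i$ gives a CDGA map $K\lra C^*(X)$; on cohomology this map agrees with the identity of $H^*(X)=P/(f_1,\ldots,f_c)$, so it is a quasi-isomorphism. Thus $C^*(X)\simeq K\simeq H^*(X)$, proving formality.

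The CDGA $K=C^*(KV)\sdr \Lambda(y_1,\ldots,y_c)$ is a relative Sullivan algebra with fibre $\Lambda(y_1,\ldots,y_c)$, which models the product of odd spheres $S^{m_1}\times\cdots\times S^{m_c}$ with $m_i=|f_i|-1$. This is the desired fibration $S^{m_1}\times\cdots\times S^{m_c}\lra X\lra KV$. Adjoining the $y_i$ one at a time exhibits it as an iterated tower of odd spherical fibrations over $KV$, so $X$ is sci by definition.

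The argument is more bookkeeping than obstruction: the one point that needs care is verifying that the extension $K\lra C^*(X)$ is both well-defined as a CDGA map (which is exactly the content of choosing the $\alpha_i$ so that $dy_i=f_i$ is respected) and a quasi-isomorphism (forced because it restricts to the chosen lift of $P$ and $P$ surjects onto $H^*(X)$). Everything else is read off directly from the explicit Sullivan model.
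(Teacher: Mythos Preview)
Your proof is correct and is essentially identical to the paper's own argument: the paper constructs the same Koszul-type Sullivan model $(\Lambda V\sdr\Lambda(\phi_1',\ldots,\phi_c'),\,d\phi_i'=f_i)$, then builds the same CDGA map to $C^*(X)$ by choosing representative cycles for the $x_i$ and coboundary witnesses for the $f_i$, and reads off the fibration from the relative Sullivan structure. The only cosmetic difference is that you make the formality conclusion more explicit by noting the intermediate quasi-isomorphism $K\simeq H^*(X)$.
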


\begin{remark}
By Theorem \ref{thm:sci} a general sci space $X$ has a similar fibration
with fibre an arbitrary space with finite dimensional odd homotopy.
Those with $H^*(X)$ ci also have zero Postnikov invariants:
there are vastly more sci spaces than those with ci cohomology.
However Example \ref{eg:hGornotGor} shows that even when the fibre is a product
of odd spheres, the cohomology ring need not be ci.
\end{remark}

\begin{proof}
We may suppose $H^*(X)=k[x_1, \ldots, f_n]/(f_1,\ldots , f_c)$ for
suitable even degree generators $x_1, \ldots , x_n$ and regular sequence
$f_1, \ldots , f_c$.  Now let $V$ be a graded vector space with
basis $x_1', \ldots , x_n'$, where the degree of $x_i'$ is the same as
that of $x_i$, and let $W$ be a graded vector space
with basis $\phi_1', \ldots , \phi_c'$ where the codegree of $\phi_i'$
is one less than that of $f_i$. We now take $X'$ to have
model $M(X')=(\Lambda (V)\sdr \Lambda (W), d\phi_1'=f_1',
\ldots , d\phi_c'=f_c')$.
Since $f_1, \ldots , f_c$ is a regular sequence,  $H^*(X')\cong H^*(X)$.

Now construct a map
$$g: M(X') \lra C^*(X)$$
by taking $g(x_i')$ to be a representative
cycle for $x_i \in H^*(X)$. Since $f_i$ is trivial in
$H^*(X)$, we may choose $\phi_i\in C^*(X)$ so that $d\phi_i =f_i$, and
define $g(\phi_i')=\phi_i$. The resulting map $g$ is a cohomology
isomorphism and therefore an equivalence.

The structure of $M(X')$ gives a fibration as claimed.
\end{proof}

Similarly, in the Gorenstein case the homotopy invariant notion is strictly
weaker. On the one hand,  Corollary \ref{cor:GorishGor} shows that if
$H^*(X)$ is Gorenstein, then $X$ is h-Gorenstein. If $X$ is h-Gorenstein
and $H^*(X)$ is Cohen-Macaulay then $H^*(X)$ is Gorenstein. However
 Example \ref{eg:hGornotGor} gives an h-Gorenstein
space whose cohomology ring is not Cohen-Macaulay.

\subsection{Separating the hierarchy.}

Since h-regular spaces are of the form $KV$, it is easy
to see that there are sci spaces that are not regular.
To give an example of an h-Gorenstein space that is not
gci, we may use connected sums of manifolds, as in Example
\ref{eg:hGornothci}. Finally, there are many spaces with
Noetherian cohomology that are not h-Gorenstein: two easy
sources of examples are either
finite dimensional spaces whose cohomology ring does not
satisfy Poincar\'e duality, or Cohen-Macaulay rings which are
not Gorenstein.

\begin{example}
\label{eg:hGornothci}
We provide a space $X$ with $H^*(X)$ so that $X$ is h-Gorenstein
but not gci. Almost any non-trivial connected sum of manifolds will do,
but we give an explicit example.

First, note that if $M$ and $N$ are manifolds, their connected sum
$$M\hash N =(M'\vee N')\cup e^n$$
where $M'$ is $M$ with a small disc removed, and similarly for $N'$.
By considering Lie models as in \cite[24.7]{FHT}, we obtain
$$\pi_*(\Omega (M\hash N))=(\pi_*(\Omega M') *
\pi_*(\Omega N'))/(\alpha+\beta), $$
where $*$ is the coproduct of graded Lie algebras, and where
$\alpha$ and $\beta$ are the attaching maps for the top cells in $M$ and
$N$.

Perhaps the simplest thing to try is $M=N=\CP^2$. Here we obtain
$$\pi_*(\Omega(\C P^2 \hash \C P^2))=
\mathrm{Lie}(u_1,v_1)/([u_1,u_1]+[v_1,v_1]),  $$
where $\mathrm{Lie}(V)$ denotes the free graded Lie algebra on $V$.
This shows $\pi_*(\Omega(\C P^2 \hash \C P^2))$ is finite, so
$\C P^2 \hash \C P^2$ is  gci, which also follows from
the fact that its cohomology ring
$$H^*(\C P^2 \hash \C P^2)=\Q [a_2,b_2]/(a^2,b^2)$$
is a complete intersection.

However, once  we take three copies, we obtain
$$\pi_*(\Omega( \C P^2 \hash \C P^2 \hash \C P^2))=\mathrm{Lie}(u_1,v_1,w_1)/
([u_1,u_1]+[v_1,v_1]+[w_1,w_1]), $$
which is not finite, so that
$\C P^2 \hash \C P^2 \hash \C P^2$ is not  gci, giving the
required example.
\end{example}

 {
\begin{example}
\label{eg:nonZCI}
Here is an example of a space which is gci but not zci. Let $X$ be the space
with model $R=(\Lambda (u_3,v_3,w_5), dw=uv)$. There is a map of graded rings
$\Psi: Z\Der (R) \to H_*(\Omega X)$ given by
\[  \Psi(\zeta) \ = \ (\zeta_\Q:\Q \to \Sigma_n \Q) \in H_n(\Omega X)\]
Clearly the image of $\Psi$ is contained in the center of $H_*(\Omega X)$. The
graded ring $H_*(\Omega X)$ is the enveloping algebra of the graded Lie algebra $L$,
where $L$ is generated by three elements $U_2$, $V_2$ and $W_4$ and a single
relation $UV=W$. The center of $H_*(\Omega X)$ is therefore the set $\{\Q W^n|n\geq0\}$.
Now suppose $X$ was zci, then we would have had appropriate elements
$\zeta_1,...,\zeta_n \in Z\Der(R)$. Since the degree of $\zeta_i$ is non zero,
$\Psi(\zeta_i)$ is either zero or $a_i W^{n_i}$ for some $n_i>0$.

Let $M$ be the $R$-module that is the cone of the map $\Q \xrightarrow{W} \Sigma_4\Q$.
The module $M/\zeta_1/\cdots/\zeta_n$ must be a small $R$-module. Now consider
the $C_*(\Omega X)$-module $\bar{M}=\Hom_{R}(\Q,M)$. By the Yoneda lemma, the map
\[\bar{\zeta}_i=\ext_R^*(\Q,\zeta_i):H_*(\bar{M}) \to H_{*+|\zeta_i|}(\bar{M})\]
of $H_*(\Omega X)$-modules is simply multiplication by $\Psi(\zeta_i)$. 
It is easy to see that
\[H_*(\bar{M})=H_*(\Omega X)/(W)=\Q[U,V]\]
and therefore the induced map $\bar{\zeta}_i$ is zero on the homology of $\bar{M}$.
We conclude that $\bar{M}/\bar{\zeta}_1/\cdots/\bar{\zeta}_n$ has infinitely many
nonzero homology groups. However,
\[ \bar{M}/\bar{\zeta}_1/\cdots/\bar{\zeta}_n \simeq \Hom_{R}(Q,M/\zeta_1/\cdots/\zeta_n)\]
and since $M/\zeta_1/\cdots.../\zeta_n$ is small and $C^*(X)$ is h-Gorenstein we see that
the $C_*(\Omega X)$-module $\Hom_{R}(Q,M/\zeta_1/\cdots/\zeta_n)$ has only finitely many
nonzero homology groups, in contradiction.
\end{example}
}

\subsection{Miscellaneous examples.}

The following example shows that the Noetherian condition is essential
in the definition of gci.

\begin{example}
\label{eg:nonNoetherian}
The space $X$ with model $(\Lambda (v_{2a},x_{2b+1},w_{2a+2b}), dw=vx)$
is not sci. For definiteness, we work with $a=b=1$, so
that we have the model $(\Lambda (v_2,x_3,w_4), dw=vx)$.

Indeed, if $X$ is sci it must be in a fibration
$S^3 \lra X \lra KV$
where $V=\Q\{ v,w\}$. But then in homotopy we have
a short exact sequence
$$0 \lra \pi_*(\Omega S^3)\lra \pi_*(\Omega X) \lra \pi_*(\Omega KV)\lra 0$$
of graded Lie algebras, which implies that $\pi_*(\Omega S^3)$
is an ideal of $\pi_*(\Omega X)$. On the other hand, since $dw=vx$,
the corresponding elements $\overline{v}_1, \overline{x}_2$ and $\overline{w}_3$
 in the Lie algebra $\pi_*(\Omega X)$ satisfy
$\overline{w}=[\overline{v}, \overline{x}]$, and we have a contradiction
since $\pi_*(\Omega S^3)$ is generated by $\overline{x}$.

This is consistent with our general results since the cohomology ring
is not Noetherian. Indeed, the cohomology ring $H^*(X)$ is $\Q [v]$
in even degrees, whilst all products of the odd degree elements
$x,wx,w^2x, \ldots$ are zero.
\end{example}

\section{The nci condition}
\label{sec:nci}

In this section we conclude by  giving a condition in the style of the
zci condition which captures polynomial growth in the non-Noetherian
situation. From another point of view, since Example \ref{eg:nonNoetherian}
shows that the Noetherian condition is essential, the nci condition
introduced here is genuinely weaker than both the zci  {and the eci } 
conditions.
The letter `n' in nci stands for nilpotent.

\subsection{The condition.}

We suppose that $R$ is a CDGA and continue to write
 $\Der (R)$ for the derived category of dg-$R$-modules.

\begin{defn}
We say that $R$ is \emph{nci of length $\leq n$} if there are
\begin{enumerate}
\item a sequence of triangulated subcategories $\Der (R)=\Der_0 \supseteq \Der_1 \supseteq \cdots \supseteq \Der_n$ (not necessarily full) and
\item natural transformations $\zeta_i:1_{\Der_i} \to \Sigma_{|\zeta_i|} 1_{\Der_i}$ for $i=0,...,n-1$
\end{enumerate}
such that the following conditions hold
\begin{enumerate}
\item Every $\Der_i$ is closed under coproducts.
\item Every $\zeta_i$ is central among natural transformations of $1_{\Der_i}$.
\item For every $X \in \Der_i$ there exists an object $X/\zeta_i \in \Der_{i+1}$ and a distinguished triangle $X \xrightarrow{\zeta_i} \Sigma_{|\zeta_i|} X \to X/\zeta_i$.
\item If $0 \neq X \in \Der_i$ is in the thick subcategory generated by $\Q$ then $X/\zeta_i$ is non-zero.
\item If $0 \neq X \in \Der (R)$ is in the thick subcategory generated by $\Q$ then $X/\zeta_0/\zeta_1/\cdots/\zeta_{n-1}$ is non-zero and small as an object of $\Der (R)$.
\end{enumerate}
\end{defn}

\begin{remark}
(i) Note first that if $R$ is zci  {or eci } of codimension $c$, it is clearly
nci of length $c$.

(ii) On the other hand, if $R$ is nci and the natural
transformations $\zeta_1,...,
\zeta_{n-1}$ can be extended to central natural transformations of
$1_{\Der (R)}$, then $R$ is almost zci (the only additional
condition required is that the cohomology be Noetherian).
Remark~\ref{rem: unextendable natural transformation} below provides
an explicit example where it is not possible to extend one of these natural transformations.
\end{remark}

In the context of rational homotopy theory there is a straightforward
characterization of nci CDGAs.

\begin{thm}
\label{the: Main theorem}
Let $R=(\Lambda V,d)$ be a minimal Sullivan algebra. Then $R$ is nci if and only if $V$ is finite dimensional.
\end{thm}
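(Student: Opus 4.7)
\textbf{Plan of proof of Theorem \ref{the: Main theorem}.}

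The two directions are of very different character. For the \emph{easy direction} (nci $\Rightarrow$ $V$ finite dimensional), I would apply condition (5) of the nci definition to the object $\Q \in \Der_0$, which lies in the thick closure of $\Q$ by definition. Conditions (3) and (5) produce a tower of cofibre triangles
\[ \Q/\zeta_0/\cdots/\zeta_{i-1} \stackrel{\zeta_i}{\lra} \Sigma_{|\zeta_i|}\Q/\zeta_0/\cdots/\zeta_{i-1} \lra \Q/\zeta_0/\cdots/\zeta_i,\qquad 0\leq i \leq n-1,\]
with the terminal object small in $\Der(R)$. Now apply $\Q\tensor_R(-)$ throughout: small modules have bounded $\Q\tensor_R(-)$, so the last term has polynomial growth $\leq -1$; each application of Lemma \ref{lem:growthintriangles} (with $n_i = |\zeta_i|\neq 0$) raises growth by at most one, so $\Q\tensor_R\Q$ has polynomial growth $\leq n-1$. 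But $\Q\tensor_R\Q\simeq C^*(\Omega X)$ for the space $X$ modelled by $R$, so $H_*(\Omega X)$ has polynomial growth. By the Milnor--Moore theorem $\pi_*(\Omega X)$ is finite dimensional, hence $V\cong \pi_*(X)^\vee$ is finite dimensional.

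For the \emph{harder direction} ($V$ finite dimensional $\Rightarrow$ $R$ is nci), I would choose a homogeneous basis $v_1,\ldots,v_n$ of $V$ in non-decreasing degree, set $V_i=\mathrm{span}(v_1,\ldots,v_i)$, and observe that $R_i=(\Lambda V_i,d|_{\Lambda V_i})$ is automatically a sub-Sullivan CDGA, with $\Q=R_0\hookrightarrow R_1\hookrightarrow\cdots\hookrightarrow R_n=R$. Define $\Der_i$ to be the localizing subcategory of $\Der(R)$ generated by objects of the form $R\tensor_{R_{n-i}}^{\mathbf L}N$ with $N\in\Der(R_{n-i})$, and take $\zeta_i$ to be the natural self-map of $1_{\Der_i}$ induced by tensoring up the Koszul-type map from Proposition \ref{prop:loopspherebuild} applied to the single-generator extension $R_{n-i-1}\subset R_{n-i}$. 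When $v_{n-i}$ is odd of degree $m$, this supplies a degree $m$ self-map; when it is even of degree $m$, Proposition \ref{prop:loopspherebuild}(ii) supplies a degree $2m-2$ self-map at the price of an intermediate small object, which is harmless for condition (5).

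Centrality of $\zeta_i$ on $\Der_i$ is automatic: $\zeta_i$ is defined on the image of extension of scalars along $R_{n-i-1}\to R$ by multiplication by a cocycle of $R_{n-i-1}$, and such multiplication commutes with every $R$-linear self-map of $R\tensor_{R_{n-i-1}}^{\mathbf L}N$. Condition (3) is built in, and condition (4) (non-vanishing of $X/\zeta_i$ when $X$ is a non-zero object of $\thick(\Q)$) follows from the growth estimate of Lemma \ref{lem:growthintriangles}: if such an $X/\zeta_i$ vanished prematurely then running the growth bound backwards along the remaining triangles would force $\Q\tensor_R\Q$ to be bounded, contradicting $V\neq 0$ via Milnor--Moore. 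Condition (5) holds because, after all $n$ cones have been formed, the object lies in the image of $R\tensor_{R_0}^{\mathbf L}(-)=R\tensor_\Q(-)$, which is the thick subcategory of $\Der(R)$ generated by $R$ itself, namely the small objects; non-triviality of the iterated cone is again ensured by the growth argument.

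The main obstacle I anticipate is the construction in the even-generator case, where the Koszul map has degree $2m-2$ rather than $m$ and requires an auxiliary bookkeeping step in the filtration; one must verify that interleaving these two-step extensions still yields a valid chain of subcategories $\Der_0\supseteq\Der_1\supseteq\cdots\supseteq\Der_n$ with central natural transformations on each. The key conceptual point — and the reason nci is strictly weaker than zci even when $V$ is finite — is that $\zeta_i$ is a central natural transformation only on $\Der_i$, not on all of $\Der(R)$: Example \ref{eg:nonNoetherian} shows that it cannot in general be extended globally because the full centre of $\Der(R)$ is too small.
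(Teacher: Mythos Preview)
Your proposal has genuine gaps in both directions.

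\textbf{The forward direction.} You invoke Lemma~\ref{lem:growthintriangles} with ``$n_i=|\zeta_i|\neq 0$'', but the nci definition imposes no such restriction on the degrees of the $\zeta_i$. The paper is explicit that ``the hard work in this section is in dealing with the case of a natural transformation of degree zero,'' and Lemma~\ref{len: Exponential growth implies not nci} devotes most of its effort to exactly this case: one uses condition~(4) to force the telescope $M_i^\infty$ of $\zeta_i$ to vanish, then a finite-dimensionality argument on $[M_i,M_i]_R$ to show that $\zeta_i\otimes_R\Q$ acts \emph{nilpotently} on $H^*(M_i\otimes_R\Q)$, so that the cone inherits the growth. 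Without this, your growth estimate simply does not propagate through a degree-zero step.

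\textbf{The backward direction.} Your one-generator-per-step filtration breaks down at an even generator with nonzero differential. If $v=v_{n-i}$ is even with $dv\neq 0$, then multiplication by $v$ is not a chain map and defines no natural transformation. Nor does Proposition~\ref{prop:loopspherebuild}(ii) apply: adjoining a single even generator of degree $2k$ gives fibre $K(\Q,2k)=\Omega S^{2k+1}$, which is the \emph{odd} case~(i), and the resulting sequence $R_{n-i-1}\to R_{n-i}\to\Sigma_{2k}R_{n-i}$ is only $R_{n-i-1}$-linear. The bimodule route fails too: the fibre of the diagonal $X_{n-i}\to X_{n-i}\times_{X_{n-i-1}}X_{n-i}$ is $\Omega K(\Q,2k)=S^{2k-1}$, so $R_{n-i}$ is free of rank~$2$ over $R_{n-i}\otimes_{R_{n-i-1}}R_{n-i}$ and there is no periodicity map to extract.

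The paper's route around this is the real content of the proof. One does \emph{not} peel off generators one at a time; instead one works by induction on $\dim V^{\mathrm{even}}$, using the second unravelling move (Lemma~\ref{lem: Unraveling an even cocycle that is not a generator}) to adjoin odd generators killing low-degree even cohomology until --- after a change of variables justified by the technical Lemma~\ref{lem: technical lemma for nci} --- a minimal even generator becomes a \emph{cocycle} (Lemma~\ref{lem: eliminating a minimal even generator}). Only then does the first unravelling move (Lemma~\ref{lem: Unraveling by even element}) apply. The examples preceding Remark~\ref{rem: unextendable natural transformation} illustrate exactly why this detour is unavoidable. Finally, your verification of condition~(4) is not valid: Lemma~\ref{lem:growthintriangles} gives only upper bounds on growth, so nothing can be ``run backwards'', and (4) must hold for every nonzero $X\in\thick(\Q)\cap\Der_i$, not just the iterated cones of $\Q$. (The correct argument, once all $|\zeta_i|\neq 0$ as in the paper's construction, is simply that a nonzero-degree self-map of a bounded complex is never an isomorphism.)
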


Lemma~\ref{len: Exponential growth implies not nci} below
shows  that if  $V$ is infinite dimensional, then  $R$ is not nci.
The converse is proved in Subsection \ref{sub: Proving the main theorem} below.

\begin{remark}
The contrast with  {eci } spaces, where Theorem \ref{thm:sci} shows the
structure
is much more constrained (the differential on even generators is zero) is
very striking.

On the other hand, the only difference is the Noetherian condition.
Indeed, if $X$ is nci and $H^*(X)$ is Noetherian, then
Theorem \ref{the: Main theorem} shows that $X$ is  gci
and therefore (by Theorems \ref{thm:gciissci} and \ref{thm:sciiszci}) also
 {eci}.
\end{remark}

\subsection{Exponential Growth}

The hard work in this section  is in dealing  with the case of a natural
transformation of degree zero. This may be a useful counterpart to the
approach to the Jacobson radical in \cite{BGzci}.

\begin{lemma}
\label{len: Exponential growth implies not nci}
Let $X$ be a simply-connected finite CW-complex. If $C^*(X)$ is nci,
then  $H_* (\Omega X)$ has polynomial growth.
\end{lemma}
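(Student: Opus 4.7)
The plan is to apply $\Hom_{C^*(X)}(-,\Q)$ to the nci tower
\[ \Q = M_0 \xrightarrow{\zeta_0} \Sigma_{|\zeta_0|} M_0 \to M_1, \ \ldots \ , \ M_{n-1} \xrightarrow{\zeta_{n-1}} \Sigma_{|\zeta_{n-1}|} M_{n-1} \to M_n = S \]
and, via Proposition \ref{prop:EMRS} which identifies $\Hom_{C^*(X)}(\Q,\Q) \simeq C_*(\Omega X)$, propagate a growth bound up the tower from the terminal small object $S=M_n$ to $M_0 = \Q$. Writing $N_i := \Hom_{C^*(X)}(M_i, \Q)$, the goal is to establish $\growth(N_0) \leq n-1$; polynomial growth of $H_*(\Omega X)$ then follows by graded dualization.

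Since $S$ is small in $\Der(C^*(X))$, it is finitely built from $C^*(X)$, so $N_n$ is finitely built from $\Hom_{C^*(X)}(C^*(X),\Q) \simeq \Q$ in $\Der(\Q)$ and has bounded total cohomology, giving $\growth(N_n) \leq -1$. Applying $\Hom_{C^*(X)}(-,\Q)$ to each triangle $M_i \xrightarrow{\zeta_i} \Sigma_{|\zeta_i|} M_i \to M_{i+1}$ and rotating produces a distinguished triangle $\Sigma_{-|\zeta_i|} N_i \to N_i \to \Sigma N_{i+1}$; whenever $|\zeta_i| \neq 0$, Lemma \ref{lem:growthintriangles} supplies the step bound $\growth(N_i) \leq \growth(N_{i+1}) + 1$, and if every $|\zeta_i|$ were nonzero, iterating $n$ times would finish the proof.

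The main obstacle is the degree-zero case $|\zeta_i| = 0$, where Lemma \ref{lem:growthintriangles} is silent; this is the counterpart of the Jacobson-radical discussion in \cite{BGzci}. The idea is to show that in the finite CW setting $\zeta_i$ must act nilpotently on the cohomology of $M_i$. An induction along the tower shows every $M_i$ lies in $\thick(\Q) \subseteq \Der(C^*(X))$, and since $\Q$ has finite-dimensional cohomology and this property is inherited by the thick closure, each $H^*(M_i)$ is a finite-dimensional module over the Artinian ring $H^*(X)$. Centrality of $\zeta_i$ in $\Der_i$ makes $\zeta_i^*$ a central $H^*(X)$-linear endomorphism of $H^*(M_i)$, and applying nci condition (4) to $M_i$ together with the idempotent completions of its indecomposable retracts obstructs $\zeta_i^*$ from being invertible on any nonzero retract, so Fitting's lemma puts $\zeta_i^*$ in the Jacobson radical of the relevant finite-dimensional $\Q$-algebra of central $H^*(X)$-linear endomorphisms, giving nilpotency $(\zeta_i^*)^k = 0$.

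Once nilpotency is in hand, the octahedral axiom exhibits $\mathrm{cone}(\zeta_i^k)$ as a $k$-step iterated extension of copies of $M_{i+1} = \mathrm{cone}(\zeta_i)$, and the long exact sequence for $M_i \xrightarrow{\zeta_i^k} M_i \to \mathrm{cone}(\zeta_i^k)$ (whose middle map is zero on cohomology) shows $\mathrm{cone}(\zeta_i^k)$ has Hilbert series $(1+t) h_{M_i}(t)$; translating through $\Hom_{C^*(X)}(-, \Q)$ yields $\growth(N_i) \leq \growth(N_{i+1})$ at degree-zero steps with no loss. Combining all the step estimates and iterating up the tower then completes the induction and establishes polynomial growth of $H_*(\Omega X)$.
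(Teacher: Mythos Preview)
Your overall strategy matches the paper's: build the tower $\Q=M_0,\ldots,M_n$ with $M_n$ small, apply a functor to $\Q$ (you use $\Hom_R(-,\Q)$, the paper uses $-\otimes_R\Q$), and propagate a growth bound backwards. The nonzero-degree steps via Lemma~\ref{lem:growthintriangles} are fine. The degree-zero case, however, has two problems.

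First, your nilpotency argument is not rigorous. You invoke condition~(4) on ``idempotent completions of indecomposable retracts'' of $M_i$, but $\Der_i$ is only assumed to be a (not necessarily full) triangulated subcategory closed under coproducts; there is no reason its retracts or idempotent completions lie in $\Der_i$, so condition~(4) cannot be applied to them. The paper instead forms the telescope $M_i^{\infty}=\hocolim(M_i\xrightarrow{\zeta_i}M_i\xrightarrow{\zeta_i}\cdots)$, which \emph{does} lie in $\Der_i$ by closure under coproducts and triangles, checks that $H^*(M_i^{\infty})$ is finite dimensional so $M_i^{\infty}\in\thick(\Q)$, and then condition~(4) forces $M_i^{\infty}\simeq 0$.

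Second, and more seriously, the final step does not follow. Knowing $(\zeta_i^*)^k=0$ on the finite-dimensional space $H^*(M_i)$ gives you the Hilbert-series identity for $\mathrm{cone}(\zeta_i^k)$, but after applying $\Hom_R(-,\Q)$ you are looking at the degree-zero self-map $(\zeta_i^k)^*$ of $N_i$, and there is no reason this is zero (or even nilpotent) on $H^*(N_i)=\Ext_R^*(M_i,\Q)$: a map that vanishes on $H^*(M_i)$ need not be null in $[M_i,M_i]_R$, and precomposition with it can act nontrivially on $\Ext$. With a degree-zero triangle $N_i\to N_i\to \Sigma\Hom_R(\mathrm{cone}(\zeta_i^k),\Q)$ and no control on that self-map, you cannot bound $\growth(N_i)$ by $\growth(N_{i+1})$. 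The paper confronts exactly this obstacle: from $M_i^{\infty}\simeq 0$ it gets degreewise nilpotence of $z=\zeta_i\otimes_R\Q$ on $H^*(M_i\otimes_R\Q)$, then uses the key fact that $[M_i,M_i]_R$ is a finite-dimensional $\Q$-algebra (so the powers $z,z^2,\ldots$ span a finite-dimensional space) together with a linear-algebra trick to upgrade this to \emph{uniform} nilpotence across all degrees. Only then does the exponential growth of $H^*(M_i\otimes_R\Q)$ force exponential growth of the cone. Your argument needs an analogous step, and the Hilbert-series computation for $M_i$ does not substitute for it.
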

\begin{proof}
Since $R=C^*(X)$ is nci there is a sequence of $R$-modules
$M_0,M_1,...,M_n$ such that
\begin{enumerate}
\item $M_0=\Q$ and $M_i \in \Der_i$.
\item There is a distinguished triangle $\Sigma^{|\zeta_{i}|} M_i \xrightarrow{\zeta_{i}} M_i \to M_{i+1}$.
\item $M_n$ is a small $R$-module.
\end{enumerate}



Suppose, by way of contradiction, that $H^*(\Omega X)$ has exponential growth.
As in Section \ref{sec:zciisgci}, if all the degrees
$|\zeta_0|,...,|\zeta_{n-1}|$ are non-zero,  then we have a contradiction,
since by Lemma \ref{lem:growthintriangles}
$H^*(M_n \otimes_R \Q)$ must have exponential growth because
$H^*(M_0\otimes_R \Q)\cong H^*(\Omega X)$ has exponential growth.

We are left with showing that if $|\zeta_i|=0$ for some $i$, the exponential
growth still propagates. So suppose that $|\zeta_i|=0$ for some $i$ and
$M_i\otimes_R \Q$ has exponential growth. Consider the homotopy colimit
$M_i^{\infty}$ of the telescope $M_i \xrightarrow{\zeta_i}
M_i \xrightarrow{\zeta_i} \cdots$. Since this homotopy colimit is part of a
distinguished triangle: $\oplus_{n=0}^\infty M_i \xrightarrow{1-\zeta_i}
\oplus_{n=0}^\infty M_i \to M_i^{\infty}$ where the first map is in $\Der_i$, $M_i^{\infty}$ is also in $\Der_i$.

Next we show that $M_i^{\infty}$ is finitely built from $\Q$.
By construction $H^*(M_i)$ is non-zero only in finitely many degrees.
Each cohomology group $H^j(M_i)$ is a finite dimensional vector space on
which $\zeta_i$ acts as a linear transformation. For  large enough $m$,
the kernel of $\zeta_i^m$ stabilizes. Denote this kernel by $K \subseteq
H^j(M_i)$, so that
we can write
$$H^j (M_i) \cong K \oplus V, $$
where $\zeta_i^m$ is zero on $K$ and is an isomorphism on $V$.
We see that $H^j (M_i^{\infty})\cong V$ is also finite dimensional, and
$M_i^{\infty}$ itself is finitely built from $\Q$.

The morphism $M_i^{\infty} \xrightarrow{\zeta_i} M_i^{\infty}$
 is an equivalence, so Condition (4) of the definition of nci
shows $M_i^{\infty}\simeq0$.
As $M_i^\infty \otimes_R \Q\simeq 0$ and $H^j(M_i\tensor_R \Q)$ is finite dimensional for each $j$, it follows that
$H^j(\zeta_i \tensor_R \Q)$ is nilpotent on $H^j(M_i\tensor_R \Q)$ for each $j$, i.e., for each $j$ there exists an $n$
such that $\zeta_i^n \tensor_R \Q$ induces the zero map on $H^j(M_i\tensor_R \Q)$.

First observe  that $[M_i,M_i]_R$ (i.e., the ring of
degree 0 homotopy endomorphisms) is a finite dimensional algebra. Indeed,
we start by observing that $[M_0,M_0]_R^*=[\Q,\Q]_R^*$ is finite
dimensional in each degree, and deduce the same for $[M_1,M_1]_R^*,
[M_2,M_2]_R^*, \ldots , [M_i,M_i]_R^*$ using the defining triangles.

Now let $z$ be the morphism
$M_i\tensor_R \Q \xrightarrow{\zeta_i\tensor_R \Q} M_i\tensor_R \Q$, and
note that  the span of $\{z,z^2,z^3...\}$ inside
$[M_i\otimes_R \Q,M_i \otimes_R \Q]_{C_*(\Omega X)}\cong [M_i,M_i]_R$ is finite
dimensional. We now resort to a classical trick to show that $z$ is nilpotent
on $H^*(M_i\otimes_R \Q)$, i.e., that there is some $N$ for which $z^N$
induces the zero map on $H^*(M_i\otimes_R \Q)$.

Suppose $\{z,z^2,...,z^n\}$ is a basis for the span of $\{z,z^2,z^3, ...\}$.
Let $x \in H^j(M_i\otimes_R \Q)$ for some $j$ and let $\{z(x),z^2(x), ...,z^k(x)\}$
be a basis for the span of $\{z(x),z^2(x), z^3(x), ...\}$. Clearly $k\leq n$.
Suppose that $z^{m}(x)\neq 0$ and $z^{m+1}(x)=0$, so that $m\geq k$.
We will show that $m=k$ so $z^{k+1}(x)=0$.

 Write
\[z^{m}(x)=a_1z(x)+a_2z^2(x)+\cdots +a_jz^j(x),\]
where $a_j \neq 0$ and $j\leq k$. First, we note that $j=k$. Indeed, applying
$z$ we find  $0=z^{m+1}(x)=\sum_{i=1}^j a_i z^{i+1}(x)$, so that
 if $j<k$ we get a linear dependence. Thus, for some $t\leq k$
we have
\[ z^{m}(x)=a_t z^t(x)+a_{t+1} z^{t+1}(x) + \cdots +a_k z^k(x),\]
where $a_t \neq 0$. It suffices to show $t=k$, so we suppose $t<k$
and deduce a contradiction. If $t<k$ we apply $z$ to our equation and
deduce
\[ z^{m+1}(x) = 0 = a_t z^{t+1}(x)+a_{t+1} z^{t+2}(x) + \cdots +a_{n-1} z^k(x) + a_k z^{k+1}(x)\]
which means that $z^{k+1}(x)$ is in the span of $\{z^{t+1}(x),...,z^{k}(x)\}$.
Applying $z$ repeatedly, we deduce $z^{k+s}$ is also in the span of
$\{z^{t+1}(x),...,z^{k}(x)\}$ for all $s\geq 1$. Now either $m=k$ and we
are done, or $m=k+s$ for some $s\geq 1$ and we obtain a contradiction.
Accordingly,  $t=k$ and $z^{k+1}(x)=0$ as required.

Since $H^* (M_i \otimes_R k)$ has exponential growth, it follows
that the kernel of the
map $H^*(z): H^* (M_i \otimes_R k) \to
H^* (M_i \otimes_R k)$ has exponential growth.
In particular, this implies that the cone of $z=\zeta_i \otimes \Q: M_i \otimes_R \Q \to M_i \otimes_R \Q$ has exponential growth.
\end{proof}

\subsection{The  first unravelling move.}
We now describe three constructions we may use to build a new nci space
$X$ from a given nci space $X'$. In practice we are given  $X$,
and we unravel the process to obtain $X'$ in such a way that if $X'$
is nci,  so too is $X$. Only the last of these three was necessary in the
 {eci }
case (it was the critical role of Proposition \ref{prop:NoethHurepi} to
show this). We work entirely algebraically, so that $R$ is a model for $X$
and $R'$ is a model for $X'$.

The first move is eliminating an even generator that is also a cocycle.
\begin{lemma}
\label{lem: Unraveling by even element}
Let $R=(\Lambda V,d)$ be a minimal Sullivan algebra and let $x\in V$ be an element of even degree such that $dx=0$. Then multiplication by $x$ yields a natural transformation on $\Der (R)$: $M \xrightarrow{x\cdot} \Sigma^{-|x|}M$ whose cone is $R/(x) \otimes_R M$. If $R/(x)$ is nci of length $\leq n$ then $R$ is nci of length $\leq n+1$.
\end{lemma}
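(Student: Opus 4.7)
The plan is to use $x$ itself as the additional natural transformation $\zeta_0$ and then stack the nci data for $R/(x)$ on top. This requires verifying that multiplication by $x$ fits the axioms and that cones of $x\cdot$ live naturally in $\Der(R/(x))$, where the assumed nci structure of $R/(x)$ takes over.

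First I would set up the natural transformation. Since $|x|$ is even and $dx=0$, the element $x$ is a central cocycle of $R$, so multiplication by $x$ gives an $R$-linear chain map $M \to \Sigma^{-|x|} M$ natural in $M \in \Der(R)$, central among natural transformations of $1_{\Der(R)}$ because $R$ is graded-commutative. To identify the cone, I would use the semifree $R$-resolution $R \otimes \Lambda(y) \simeq R/(x)$ with $dy = x$: tensoring with $M$ exhibits the mapping cone of $x \cdot$ as $R/(x) \otimes_R M$, naturally an $R/(x)$-module.

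Next I would assemble the filtration. Let $\mathcal{D}'_0 \supseteq \cdots \supseteq \mathcal{D}'_n$ and $\zeta'_0,\dots,\zeta'_{n-1}$ witness $R/(x)$ as nci of length $\leq n$. Define $\Der_0 := \Der(R)$ with $\zeta_0 := x\cdot$, and for $1 \leq i \leq n$ set $\Der_i := \mathcal{D}'_{i-1}$, viewed as a triangulated subcategory of $\Der(R)$ by restriction of scalars along $R \twoheadrightarrow R/(x)$, with $\zeta_i := \zeta'_{i-1}$. Condition (3) is built into the construction at the first step (by the cone identification), and at higher steps it is inherited from the nci data of $R/(x)$. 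Closure under coproducts and centrality at levels $i \geq 1$ transfer straightforwardly because any $R$-linear map between $R/(x)$-modules is automatically $R/(x)$-linear (as $x$ annihilates both source and target). Condition (5), smallness of the final cone as an $R$-module, follows because $R/(x)$ itself sits in the triangle $R \xrightarrow{x\cdot} R \to R/(x)$, so any small $R/(x)$-module is small as an $R$-module.

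The step I expect to be the main obstacle is Condition (4), non-triviality of each cone. For $\zeta_0$, if $0 \neq X \in \thick_{\Der(R)}(\Q)$, I would argue that $x$ acts as zero on $\Q$ (since $x$ lies in the augmentation ideal of the minimal algebra $R$) and therefore nilpotently on $H^*(X)$; since $H^*(X)$ is finite-dimensional and non-zero, $x\cdot$ cannot be an isomorphism, so its cone is non-zero. For the higher $\zeta_i = \zeta'_{i-1}$, the delicate point is to argue that any $X \in \mathcal{D}'_{i-1}$ which lies in $\thick_{\Der(R)}(\Q)$ also lies in $\thick_{\Der(R/(x))}(\Q)$, so that the non-triviality hypothesis for $R/(x)$ applies; I would handle this by checking that restriction of scalars sends $\Q_{R/(x)}$ to $\Q_R$, and that the characterization of $\thick(\Q)$ in either category as the objects of finite total cohomology transfers across the restriction (any $R/(x)$-module with finite-dimensional cohomology is built finitely from $\Q$ in $\Der(R/(x))$). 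With that transfer, Condition (4) for $R$ follows from Condition (4) for $R/(x)$, completing the verification that $R$ is nci of length $\leq n+1$.
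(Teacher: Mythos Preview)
Your proposal is correct and follows essentially the same route as the paper: take $\zeta_0 = x\cdot$, identify its cone as $R/(x)\otimes_R M$, and pull back the nci structure of $R/(x)$ along restriction of scalars $p^*:\Der(R/(x))\to\Der(R)$, using that $R/(x)$ is small over $R$ and that $\thick(\Q)$ is characterized by finite-dimensional cohomology in both categories. The only cosmetic difference is your Condition~(4) argument for $\zeta_0$: you invoke nilpotence of $x$ on $H^*(X)$, whereas the paper simply notes $|\zeta_0|\neq 0$ so $\zeta_0$ cannot induce an isomorphism on a nonzero finite-dimensional graded vector space.
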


We record the topological counterpart of this lemma.
\begin{lemma}
Let $x\in \pi_{2n}^{\vee}(X)$ be an element that is in the image of the dual Hurewicz map $H^*(X) \to \pi_*^{\vee}(X)$. Then there is a fibration sequence:
\[ X' \to X \to K(\Q x)\]
such that $x$ is in the image of $\pi_{2n}^\vee(K(\Q x))$. If $X'$ is nci of length $\leq n$ then $X$ is is nci of length $\leq n+1$.
\end{lemma}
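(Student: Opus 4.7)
The plan is to deduce this topological lemma directly from its algebraic counterpart, Lemma \ref{lem: Unraveling by even element}. Let $R = (\Lambda V, d)$ be a minimal Sullivan model for $X$, so that $V \cong \pi_*^\vee(X)$ and the dual Hurewicz map is identified with the projection $H^*(\Lambda V) \to V = \Lambda V / \Lambda^{\geq 2} V$. The hypothesis that $x \in V^{2n}$ lies in the image of $h^\vee$ means there is a cocycle $\zeta = x + g \in \Lambda V$ with $g \in \Lambda^{\geq 2} V$ and $d\zeta = 0$.

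The first step is a change of generators to arrange that $x$ itself is a cocycle. Choose a vector space complement $V_0$ of $\Q x$ in $V$ and define an algebra endomorphism $\phi$ of $\Lambda V$ by $\phi(x) = \zeta$ and $\phi|_{V_0} = \mathrm{id}$. Since $g$ is decomposable, $\phi$ acts as the identity on $V = \Lambda V / \Lambda^{\geq 2} V$ and preserves the word-length filtration $\Lambda^{\geq k} V$; hence $\phi$ is an algebra automorphism. Pulling back the differential via $\phi$ yields a new minimal Sullivan structure $d'$ on the same graded algebra, isomorphic to $R$, with $d'x = \phi^{-1}(d\zeta) = 0$ and $d'(V_0) \subseteq \Lambda^{\geq 2} V$. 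After this isomorphism we may assume outright that our model satisfies $dx = 0$.

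Now the inclusion $\iota: (\Lambda \Q x, 0) \hookrightarrow R$ is a CDGA map modeling a topological map $X \to K(\Q x) \simeq K(\Q, 2n)$, and by construction the induced map $\pi_{2n}^\vee(K(\Q x)) \to \pi_{2n}^\vee(X)$ sends the canonical generator to $x$. Because $\iota$ is a relative Sullivan algebra, the derived and underived tensor products agree and the homotopy fibre $X'$ is modeled by $R \otimes_{\Lambda \Q x}^L \Q \simeq R/(x)$. Thus $C^*(X') \simeq R/(x)$.

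Finally, suppose $X'$ is nci of length $\leq n$, so that $R/(x)$ is nci of length $\leq n$. By Lemma \ref{lem: Unraveling by even element}, multiplication by $x$ defines a central natural transformation $1_{\Der(R)} \to \Sigma^{-|x|} 1_{\Der(R)}$ of nonzero degree whose cone functor is $R/(x) \otimes_R (-)$, and prepending this transformation and the subcategory $\Der_1 = \Der(R/(x))$ to the nci data on $R/(x)$ produces an nci structure on $R$ of length $\leq n+1$. This gives that $X$ is nci of length $\leq n+1$, as required. The only point requiring care is the change-of-generators move, which as noted above preserves minimality because $\phi$ respects the word-length filtration; everything else reduces to a direct appeal to the algebraic lemma.
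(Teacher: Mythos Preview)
Your proof is correct and is precisely the intended reduction: the paper states this lemma as the topological counterpart of Lemma~\ref{lem: Unraveling by even element} without giving it a separate proof, and your argument supplies the standard translation via a minimal Sullivan model, including the change-of-generators step needed to make $x$ a cocycle, after which the algebraic lemma applies verbatim.
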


\begin{proof}[of Lemma~\ref{lem: Unraveling by even element}]
In this case there is a short exact sequence of dg-$R$-modules:
\[ \Sigma^{|x|} R \hookrightarrow R \twoheadrightarrow R/(x) .\]
The leftmost map is given by multiplication by $x$: $a \mapsto a\cdot x$.
We write $V=W \oplus \Q x$ so that $R/(x)\cong (\Lambda W,d)$, isomorphic
to a sub-DGA of $R$.

Define a natural transformation $\zeta: 1_{\Der (R)} \to \Sigma^{-|x|} 1_{\Der (R)}$ as multiplication by $x$. If $M$ is a cofibrant dg-$R$-module, then applying $-\otimes_R M$ to the short exact sequence above yields a distinguished
triangle
\[ \Sigma^{|x|} M \xrightarrow{\zeta} M \to R/(x) \otimes_RM\]
in $\Der (R)$.

Now suppose that $R/(x)$ is nci of length $\leq n$, so that
 there are subcategories $\Der_{R/(x)}=\Der_0 \supseteq \Der_1 \supseteq \cdots \supseteq \Der_n$ and appropriate natural transformations $\zeta_0,...,\zeta_{n-1}$. The map $p:R \to R/(x)$ of CDGAs induces an obvious functor
$p^*:\Der (R/(x)) \to \Der (R)$. Define $\Der'_i=p^* \Der_i$ and similarly $\zeta'_i=p^*\zeta_i$. Set $\Der'_{-1}=\Der (R)$ and let $\zeta'_{-1}$ be the natural transformation $\zeta$ defined above.

We may check that the subcategories $\Der'_{-1},...,\Der'_{n-1}$ and natural
transformations $\zeta'_{-1},...,\zeta'_{n-1}$ satisfy the conditions for being
nci. One need only note three things. First,
if $M$ is a small dg-$R/(x)$-module then $p^*M$ is a small dg-$R$-module,
because $R/(x)$ is a small $R$-module. Second, if $N$ is a dg-$R$-module
finitely built by $\Q$, then so is $R/(x)\otimes_R N$. The reason is that
$N$ is finitely built by $\Q$ over $R$ if and only if $H^*(N)$ is finite
dimensional. Third, if $N \neq 0$ is finitely built by $\Q$, then $N/\zeta$
is not zero,  because $|\zeta|\neq 0$ and so $\zeta$ cannot induce an
isomorphism of $H^*(N)$.
\end{proof}

\subsection{The  second unravelling move.}
The second move eliminates an even cocycle by adding an odd generator.
The argument is essentially the same as the previous one
(except that the cocycle is not a generator),  so we omit the proof.
\begin{lemma}
\label{lem: Unraveling an even cocycle that is not a generator}
Let $R=(\Lambda V,d)$ be a minimal Sullivan algebra and let $f\in R$ be an even cocycle. Let $R'=(\Lambda (V\oplus \Q y),d')$,
 where $d'y=f$ and $d'v=dv$ for all $v\in V$. Then multiplication by $f$ yields a natural transformation on $\Der (R)$: $M \xrightarrow{f\cdot} \Sigma^{-|f|}M$ whose cone is $R' \otimes_R M$. If $R'$ is nci of length $\leq n$ then $R$ is is nci of length $\leq n+1$.
\end{lemma}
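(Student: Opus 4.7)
The proof will closely mirror that of Lemma~\ref{lem: Unraveling by even element}, the key structural change being that instead of dividing out by a generator of $R$ we adjoin a new odd generator whose differential is $f$. First I would set up the distinguished triangle. As a graded $R$-module, $R'=R\oplus Ry$ with $|y|=|f|-1$; the inclusion $R\hookrightarrow R'$ is a map of DG-$R$-modules whose quotient $R'/R$, equipped with the induced differential $d(ay)=(da)y$, is naturally isomorphic to the suspension $\Sigma^{1-|f|}R$. This yields a short exact sequence of DG-$R$-modules
\[ 0 \to R \to R' \to \Sigma^{1-|f|} R \to 0, \]
whose connecting homomorphism unwinds (via the formula $d'(ay)=(da)y\pm af$) to multiplication by $f$. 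After rotation we obtain a distinguished triangle
\[ \Sigma^{|f|} R \xrightarrow{f\cdot} R \to R' \to \Sigma^{|f|+1} R \]
in $\Der(R)$. Because $R'$ is cofibrant as an $R$-module (a finite free extension), tensoring with an arbitrary $M$ produces the natural triangle
\[ \Sigma^{|f|} M \xrightarrow{f\cdot} M \to R'\otimes_R M, \]
exhibiting $R'\otimes_R M$ as the cone of $f\cdot$. Centrality of this natural transformation is immediate since $R$ is graded-commutative and $f$ is a cocycle.

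Next I would pull back the nci structure from $R'$. Writing $\iota: R \to R'$ for the CDGA map and $\iota^*:\Der(R')\to\Der(R)$ for the restriction of scalars, set $\Der'_i := \iota^* \Der_i$ and $\zeta'_i := \iota^* \zeta_i$ for $0 \leq i \leq n-1$, then prepend $\Der'_{-1} := \Der(R)$ with $\zeta'_{-1}$ the natural transformation constructed above. The resulting descending chain $\Der(R)\supseteq \Der'_0\supseteq \cdots \supseteq \Der'_{n-1}$ together with the natural transformations $\zeta'_{-1},\ldots,\zeta'_{n-1}$ is the candidate nci structure of length $n+1$.

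Finally I would verify the five axioms, and here the three observations used in the proof of Lemma~\ref{lem: Unraveling by even element} all carry over: (i) $R'$ is small as an $R$-module, being a rank-two free $R$-module, so $\iota^*$ preserves smallness and $M_n/\zeta'_{-1}/\zeta'_0/\cdots/\zeta'_{n-1}$ is small in $\Der(R)$ whenever the corresponding object over $R'$ is; (ii) if $N\in\Der(R)$ is finitely built by $\Q$, equivalently $H^*(N)$ is finite-dimensional, then the triangle shows that $R'\otimes_R N$ has finite-dimensional cohomology and so is finitely built by $\Q$; (iii) for any non-zero $N$ of finite-dimensional cohomology the cone $N/\zeta'_{-1}$ is non-zero, since $|\zeta'_{-1}|=|f|\neq 0$ prevents multiplication by $f$ from being an isomorphism on $H^*(N)$. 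All remaining properties (closure under coproducts, existence of cones, centrality, and non-vanishing inside $\Der'_i$ for $i\geq 0$) transport from the nci structure on $R'$ through the exact functor $\iota^*$. The only genuinely new point is the identification of $R'\otimes_R M$ with the cone of $f\cdot$; once that triangle is secured, the argument is a direct adaptation of the previous lemma, and there is no substantive further obstacle.
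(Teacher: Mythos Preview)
Your proposal is correct and follows exactly the approach the paper intends: the paper states that ``the argument is essentially the same as the previous one (except that the cocycle is not a generator), so we omit the proof,'' and you have carried out precisely that adaptation, replacing the quotient $R/(x)$ by the free extension $R'$ and checking the three transport properties (smallness of $R'$ over $R$, preservation of finite $\Q$-building, and non-vanishing of cones for $|f|\neq 0$) in the same manner as Lemma~\ref{lem: Unraveling by even element}.
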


The topological counterpart of this lemma is again a fibration:
\[ X' \to X \to K(\Q f),\]
only this time we just require that $X \to K(\Q f)$ represent a nontrivial element in $H^*(X)$.

\subsection{The third unravelling move.}
Finally, the third move is passing to a subalgebra.
This is the precise counterpart of the argument of Subsection \ref{subsec:gciisscistrategy}, and the conclusion is analogous to a spherical fibration
$$S^{|x|-1}\lra X\lra X'.$$
Nevertheless, we describe how this move fits within the nci context.

\begin{lemma}
\label{lem: Unraveling by odd element}
Let $R=(\Lambda V,d)$ be a minimal Sullivan algebra. Let $x\in V$ be an element of odd degree such that $dv \notin x\Lambda V$ for all $v\in V$. Then there is a sub Sullivan algebra $Q \subset R$ and a natural transformation $\zeta:1_{\Der (R)} \to \Sigma^{|x|+1}1_{\Der (R)}$ such that for any $M\in \Der (R)$ there is a distinguished triangle in $\Der (R)$:
\[M \xrightarrow{\zeta} \Sigma^{|x|+1} M \to \Sigma R\otimes_Q M \ .\]
If $Q$ is nci of length $\leq n$ then $R$ is nci of length $\leq n+1$.
\end{lemma}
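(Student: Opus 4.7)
The plan is to identify a sub-Sullivan algebra $Q$, build $\zeta$ from a short exact sequence of $R$-bimodules involving $R \otimes_Q R$, and then inherit the nci structure from $Q$ via induction along $Q \hookrightarrow R$.

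The hypothesis $dv \notin x\Lambda V$ for all $v \in V$ guarantees, for any graded complement $W$ of $\Q x$ in $V$, that $d(W) \subset \Lambda W$ and $dx \in \Lambda W$; hence $Q := (\Lambda W, d|_{\Lambda W})$ is a sub-CDGA of $R$, and as graded $Q$-algebras $R = Q \otimes \Lambda(x)$ is free of rank two over $Q$. Set $\theta := x \otimes 1 - 1 \otimes x \in R \otimes_Q R$; this lies in cohomological degree $|x|$, is a cocycle (since $dx \in Q$ slides across the tensor product over $Q$), and satisfies $\theta^2 = 0$ as $|x|$ is odd. The multiplication $\mu \colon R \otimes_Q R \to R$ is a surjection of $R$-bimodules with kernel the principal sub-bimodule $R\theta$; a direct basis computation identifies $R\theta$ with $\Sigma_{|x|} R$ (up to sign twist) as $R$-bimodules. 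The resulting short exact sequence
\[0 \longrightarrow \Sigma_{|x|} R \longrightarrow R \otimes_Q R \xrightarrow{\mu} R \longrightarrow 0\]
of $R$-bimodules, after tensoring with any $M \in \Der(R)$ and rotating, yields the distinguished triangle
\[R \otimes_Q M \longrightarrow M \xrightarrow{\zeta_M} \Sigma^{|x|+1} M \longrightarrow \Sigma R \otimes_Q M,\]
defining the natural transformation $\zeta$ of the lemma. Naturality in $M$ is immediate from tensoring a fixed bimodule sequence, and centrality follows from the extension class living in $HH^*(R|Q)$, which maps into $Z\Der(R)$ as in Section~\ref{sec:centre}.

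For the nci inheritance, suppose $Q$ is nci of length $\leq n$ with data $\Der(Q) = \Der_{Q,0} \supseteq \cdots \supseteq \Der_{Q,n}$ and natural transformations $\zeta^Q_0, \ldots, \zeta^Q_{n-1}$. Set $\Der'_0 = \Der(R)$ with $\zeta'_0 = \zeta$. For each $i \geq 0$, define $\Der'_{i+1}$ to be the subcategory of $\Der(R)$ whose objects are modules of the form $R \otimes_Q N$ for $N \in \Der_{Q,i}$ and whose morphisms are those in the image of induction $R \otimes_Q (-) \colon \Der_{Q,i} \to \Der(R)$; this is a (not necessarily full) triangulated subcategory closed under coproducts since induction is exact and preserves coproducts. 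For $0 \leq i \leq n-1$, set $\zeta'_{i+1} := R \otimes_Q \zeta^Q_i$, which is central on $\Der'_{i+1}$ and whose cone on $R \otimes_Q N$ is $R \otimes_Q (N/\zeta^Q_i) \in \Der'_{i+2}$. To verify the final nci axiom, take $M \in \Der(R)$ nonzero and in the thick subcategory generated by $\Q$, so that $H^*(M)$ is finite-dimensional. Its restriction $i^*M$ to $\Der(Q)$ then has the same finite-dimensional cohomology, so $i^*M$ is nonzero and finitely built by $\Q$ there. The iterated quotient $M/\zeta'_0/\cdots/\zeta'_n$ is canonically identified with $R \otimes_Q (\Sigma(i^*M)/\zeta^Q_0/\cdots/\zeta^Q_{n-1})$; by the $Q$-nci hypothesis the inner quotient is small and nonzero in $\Der(Q)$, and since $R$ is free of rank two over $Q$ the induction functor preserves both smallness and nontriviality, yielding a small nonzero object of $\Der(R)$.

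The main obstacle is the correct setup of the subcategories $\Der'_i$: because induction $R \otimes_Q (-)$ is not fully faithful, a naive definition risks producing ill-defined natural transformations $\zeta'_i$ on objects admitting multiple induced presentations. The ``not necessarily full'' clause in the nci definition is essential here, and one must arrange the morphism sets of $\Der'_i$ precisely so that each $\zeta'_i$ is unambiguously natural and its cone continues to lie in the next subcategory of the tower; verifying that finite-dimensionality of cohomology transfers correctly between $\Der(R)$ and $\Der(Q)$ under restriction and induction is the other technical check one must not overlook.
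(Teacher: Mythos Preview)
Your approach is essentially the paper's: both take $Q=(\Lambda W,d)$ for a complement $W$ of $\Q x$, produce $\zeta$ from the short exact sequence of $R$-bimodules $0\to\Sigma_{|x|}R\to R\otimes_Q R\xrightarrow{\mu}R\to 0$ (the paper obtains this by back-reference to Proposition~\ref{prop:loopspherebuild} and the surrounding discussion rather than writing out $\theta=x\otimes1-1\otimes x$ explicitly), and then transport the nci data along the induction functor $\iota_*=R\otimes_Q(-)$, checking exactly the same three points---smallness is preserved by $\iota_*$, finite $\Q$-build restricts along $\iota^*$, and $|\zeta|\neq 0$ forces $M/\zeta\neq 0$ on objects with bounded finite-dimensional cohomology.

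One small overclaim to correct: the hypothesis $dv\notin x\Lambda V$ only says that no $dv$ is \emph{purely} a multiple of $x$; it does not by itself force $d(W)\subset\Lambda W$ for \emph{every} graded complement $W$ (e.g.\ take $V=\langle v_2,x_3,y_3,z_5\rangle$ with $dz=xy+v^3$). The paper is equally casual here, and in its applications $x$ is always a top-degree generator so that degree reasons make any complement work; you should simply say ``for a suitable complement $W$'' rather than ``for any''.
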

\begin{proof}
Write $V=W\oplus \Q x$,  and let $Q$ be the Sullivan algebra $(\Lambda W,d)$.
Clearly $Q$ is a minimal Sullivan algebra and there is an obvious inclusion
$\iota:Q \hookrightarrow R$ of Sullivan algebras. We have seen earlier that
there is a natural transformation $\zeta$ on $\Der (R)$ such that
$M \xrightarrow{\zeta} \Sigma^{|x|+1} M \to \Sigma R\otimes_Q M$ is a distinguished triangle for all $M\in \Der (R)$ .

Suppose that $Q$ is nci of length $\leq n$, so there are subcategories
$\Der_{Q}=\Der_0 \supseteq \Der_1 \supseteq \cdots \supseteq
\Der_n$ and appropriate natural transformations $\zeta_0,...,\zeta_{n-1}$.
The map $\iota:Q \to R$ of CDGAs induces a functor $\iota_*:\Der (Q) \to
\Der (R)$, where $\iota_*(N)$ is the induced dg-$R$-module
$R\otimes_Q N$. Define $\Der'_i=\iota_* \Der_i$ and similarly
$\zeta'_i=\iota_*\zeta_i$. Set $\Der'_{-1}=\Der (R)$ and let
$\zeta'_{-1}$ be the natural transformation $\zeta$ on $\Der (R)$ defined
above.

We may check that the subcategories $\Der'_{-1},...,\Der'_{n-1}$ and natural transformations $\zeta'_{-1},...,\zeta'_{n-1}$ satisfy the conditions for being nci.
One need only note three things. First, if $N$ is a small
DG-$Q$-module then $\iota_*N$ is a small DG-$R$-module. Second,
if $M$ is a dg-$R$-module finitely built by $\Q$, then $M$ is finitely built
by $\Q$ also as a dg-$Q$-module. Third, $|\zeta|\neq 0$ and therefore
$M/\zeta \neq 0$ for every non-zero dg-$R$-module $M$ that is finitely built
by $\Q$.
\end{proof}

\subsection{Two examples.}
We discuss some examples of minimal Sullivan algebras which are nci but
not  {eci}.

\begin{example}
Consider the minimal Sullivan algebra
\[ R=(\Lambda(x_3,y_3,z_3,a_8),dx=dy=dz=0, da=xyz). \]
First, we see that it is nci by showing explicitly how to unravel it.
Indeed, we may apply Lemma~\ref{lem: Unraveling an even cocycle that is not a generator} to the cocycle $xy$ to yield
\[ R'=(\Lambda(x,y,z,w,a),da=xyz, dw=xy).\]
Now, $d(wz)=da$, so by a change of variables $a'=a-wz$ we see that
\[R' \cong (\Lambda(x,y,z,w,a'), da'=0, dw=xy)\]
Now  $R'$ is  {eci } and from its homotopy we see it is of codimension $4$.
It is therefore also nci of length 4, and therefore
 $R$ is nci of length $\leq 5$.

On the other hand, it is not hard to see that $R$ is not  {eci}. Most explicitly,
one may identify the cohomology ring explicitly and observe that it is not
Noetherian: it has a basis of monomials $x^iy^jz^ka^l$ where $i,j,k\in \{0,1\}$
omitting the monomials $xyza^l$ for $l\geq 0$ and $a^l$ for $l\geq 1$.
All elements (except those in codegree zero) are nilpotent.

Note also that the dual Hurewicz map is not surjective in codegree 8, so that
the method of Subsection \ref{subsec:gciisscistrategy} cannot be applied.
\end{example}

\begin{remark}
\label{rem: unextendable natural transformation}
Finally, we can see explicitly why the natural transformation in the nci definition cannot always be extended as we would require for the zci definition.
In the previous example, multiplication by the cocycle $a'=a-wz$ defines a natural transformation on the $\Der (R')$ and therefore also on the image of
$\Der (R')$ under restriction. This natural transformation cannot be
extended to a central natural transformation of $1_{\Der (R)}$, since
we would then have a commutative diagram
\[ \xymatrixcompile{ {\Sigma^{|xy|+|a'|} R} \ar[r]^{xy} \ar[d]^{a'} & {\Sigma^{|a'|}R} \ar[r] \ar[d]^{a'} & {\Sigma^{|a'|}R'} \ar[d]^{a'} \\
{\Sigma^{|xy|} R} \ar[r]^{xy} \ar[d] & R \ar[r] \ar[d] & R \ar[d]^{a'} \\
{\Sigma^{|xy|} B} \ar[r]^{xy} & B \ar[r] & {B'} } \]
of distinguished triangles in $\Der (R)$.
The natural transformation $a'$ must be the zero map on $R$, hence $B\cong R\oplus \Sigma^{|a|}R$. This shows that $B'$ is isomorphic to $R' \oplus \Sigma^{|a|}R'$, and therefore $a'$ acts as a regular element. However this contradicts
the fact that $(a')^2=0$ from the long exact sequence of the triangle.
\end{remark}

\begin{example}
Consider the minimal Sullivan algebra
$$R=(\Lambda(x_5,y_3,z_3,y'_3,z'_3,a_{10}),dx=yz+y'z', da=xyy').$$
First, we see that it is nci by showing explicitly how to unravel it.
We apply Lemma~\ref{lem: Unraveling an even cocycle that is not a generator} to the cocycle $yy'$, yielding:
\[ R'=(\Lambda(x_5,y_3,z_3,y'_3,z'_3,a_{10},w_5),dx=yz+y'z', da=xyy', dw=yy')\]
This yields $d(a+xw)=(dx)w$. We apply Lemma~\ref{lem: Unraveling an even cocycle that is not a generator} twice more, for the cocycles $wy$ and $wy'$, yielding:
\[ R''=(\Lambda(x_5,y_3,z_3,y'_3,z'_3,a_{10},w_5,t_7,t'_7),dx=yz+y'z', da=xyy', dw=yy', dt=wy, dt'=wy')\]
Finally we have $d(a+xw-zt-zt')=0$. So, as in the previous example, we can do a change of variables $a'=a+xw-zt-zt'$ and see that $R''$ is  {eci } of
codimension 8, and hence nci of length $8$. It follows that
 $R$ is $nci$ of length $\leq 11$.

On the other hand, it is not hard to see that $R$ is not  {eci}. Indeed,
since the differential is non-zero on the top even class $a_{10}$,
the dual Hurewicz map is not surjective in codegree 10, so that
the method of Subsection \ref{subsec:gciisscistrategy} cannot be applied.
By Proposition \ref{prop:NoethHurepi}, $H^*(R)$ is not Noetherian and so
$R$ is not  {eci}.
\end{example}

\subsection{Proof of Theorem~\ref{the: Main theorem}}
\label{sub: Proving the main theorem}

Let $R=(\Lambda V,d)$ be a simply-connected minimal Sullivan algebra where
$V$ is finite dimensional. We will show that $R$ is nci. The proof proceeds
by induction on the dimension of $V^\mathrm{even}$.
The induction starts since if $V^\mathrm{even}=0$,  successive applications
of the third unravelling move (Lemma~\ref{lem: Unraveling by odd element})
will reduce to a Sullivan algebra with trivial differential. This is then
the model of a product of odd spheres, which is obviously nci.

If $V^\mathrm{even} \neq 0$, then we apply
Lemma~\ref{lem: eliminating a minimal even generator} below, which
says we may repeatedly apply the second unravelling move (i.e.,
add a finite number of odd generators) until we reach a CDGA $R'$ with an even
generator $a \in V^\mathrm{even}$ such that $da=0$. Now use the first
unravelling move (Lemma~\ref{lem: Unraveling by even element}) on $a$.
The minimal Sullivan algebra $R'/(a)$ is nci by the inductive hypothesis,
so that $R'$ is nci by Lemma~\ref{lem: Unraveling by even element},  and
$R$ is nci by Lemma
\ref{lem: Unraveling an even cocycle that is not a generator}.
\qqed

The key ingredient 
is the following technical result. Note that for a minimal Sullivan algebra
$R=(\Lambda V,d)$ with $V$ of finite type, the image of an element
$[f]\in H^n(R)$ under the dual Hurewicz $h^\vee$ map is non-zero if and only
if there is an isomorphism of minimal Sullivan algebras
$\rho:R \xrightarrow{\cong} (\Lambda V',d')$ such that $\rho(f)\in V'$.
\begin{lemma}
\label{lem: technical lemma for nci}
Let $R=(\Lambda V,d)$ be a minimal (simply connected) Sullivan algebra such
that $V$ is finite dimensional and concentrated only in odd degrees. Let
$0\neq [f]\in H^{2i+1}(R)$ be an odd element of the cohomology of $R$.
If $h^\vee([f])=0$, then there is an element $0\neq [g]\in H^{2j}(R)$ such that $0<j\leq i$.
\end{lemma}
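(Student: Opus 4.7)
The plan is to argue by induction on $n := \dim V$, with vacuous base case $n = 0$. For the inductive step, I would choose $v \in V$ of maximum degree and set $V' := V \ominus \Q v$. Maximality of $|v|$ forces $d(V') \subset \Lambda V'$, since any monomial of $d v'$ containing $v$ would require a cofactor of impossibly small degree in the simply connected setting. Thus $R' := (\Lambda V', d|)$ is a sub-DGA, and the short exact sequence $0 \to R' \to R \to \Sigma_{|v|} R' \to 0$ of $R'$-modules yields a long exact sequence
\[ \cdots \to H^{k-1-|v|}(R') \xrightarrow{\cdot [dv]} H^k(R') \to H^k(R) \to H^{k-|v|}(R') \xrightarrow{\cdot [dv]} H^{k+1}(R') \to \cdots \]
with connecting homomorphism given, up to sign, by multiplication by $[dv] \in H^{|v|+1}(R')$. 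After subtracting a decomposable primitive from $v$, I may assume $dv = 0$ or $[dv] \neq 0$ in $H^*(R')$.

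Next, I would write the representative cocycle as $f = g + vh$ with $g, h \in R'$. Since $|v|$ is odd, $df = 0$ unpacks as $dh = 0$ and $dg + dv \cdot h = 0$; in particular, $[dv] \cdot [h] = 0$ in $H^*(R')$. After subtracting a coboundary $d(vk) = dv \cdot k - v \cdot dk$ with $k \in R'$, I may further arrange that either $h = 0$ or $[h] \neq 0$ in $H^{2i+1-|v|}(R')$. In the first case, $f$ is a nonzero cocycle of $R'$ (since coboundaries in $R'$ remain such in $R$) with $h^\vee_{R'}[f] = 0$, the Hurewicz map being compatible with the inclusion $V' \hookrightarrow V$; the inductive hypothesis then supplies an even class in some degree $2j$ with $0 < j \leq i$. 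In the second case, $[h]$ itself lives in the even degree $2m := 2i + 1 - |v|$: ruling out $m = 0$ is immediate, since $h$ would then be a nonzero constant giving $h^\vee_R[f]$ a nonzero $v$-component and contradicting the hypothesis, so $0 < m \leq i - 1$, and $[h]$ is a candidate witness.

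The principal remaining task---and the main obstacle---is to transfer the even class obtained in $R'$ to a nonzero class in $R$. The kernel of $H^{2j}(R') \to H^{2j}(R)$ is exactly the image of multiplication by $[dv]$, so if the candidate dies in $R$ it is of the form $[dv] \cdot [q]$ with $[q]$ an even cohomology class in strictly smaller positive degree; one then iterates on $[q]$, using the relation $[dv] \cdot [h] = 0$ and the surjectivity/injectivity statements extracted from the long exact sequence under the contradiction hypothesis that $H^{2j}(R) = 0$ for all $0 < j \leq i$. The iteration either terminates by producing a class whose image in $H^*(R)$ is nonzero (completing the proof) or it collapses the original witness to a pure power $c\,[dv]^N$; in this degenerate endgame, the identity $dg = -c(dv)^{N+1}$ shows that $(dv)^{N+1}$ is already a coboundary in $R'$, and combining this nilpotency of $[dv]$ with the nontriviality of $[f]$ in $H^{2i+1}(R)$ should force the existence of a distinct surviving even class through a direct cochain-level construction. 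The hard part is precisely controlling this endgame, ensuring that the class one ultimately produces is not annihilated by the inclusion $H^*(R') \to H^*(R)$.
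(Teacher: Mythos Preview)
Your approach takes the opposite tack from the paper: you remove a \emph{maximal}-degree generator $v$ to obtain a sub-DGA $R' \hookrightarrow R$, whereas the paper removes a \emph{minimal}-degree generator $x$ (necessarily a cocycle) and passes to the quotient $S \simeq R/(x)$. This asymmetry is exactly what makes your transfer step hard and theirs easy.

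Concretely, the gap is in the endgame you flag. In the paper's long exact sequence the connecting map $\psi: H^k(S)\to H^{k-|x|+1}(S)$ \emph{lowers} codegree, so a minimality argument works: choose the even class in $H^*(S)$ of least positive codegree; if it were not in the image of $H^*(R)$ it would produce a strictly smaller even class in $H^*(S)$, which is absurd. In your setup the kernel of $H^{2j}(R')\to H^{2j}(R)$ is the image of $[dv]\cdot$ from \emph{lower} codegree, so iterating downward lands at the class $[dv]$ itself, which is a coboundary in $R$ and never survives. Your proposed rescue via $dg=-c(dv)^{N+1}$ only makes sense in Case~2; in Case~1 (where $h=0$ and you invoke induction on $R'$) the even class produced by the inductive hypothesis may well be a power of $[dv]$, and you have no further relation to exploit.

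There is a more basic structural reason this route is problematic: even classes in $H^*(R)$ need not arise from even classes in $H^*(R')$ at all. Take $R=(\Lambda(a_3,b_3,c_5),\,dc=ab)$. Here $R'=\Lambda(a,b)$ has exactly one positive even class $[ab]\in H^6(R')$, which dies in $R$ since $ab=dc$. Yet $H^8(R)=\Q\{ac,bc\}$ is nonzero; these classes come from the \emph{odd} part of $H^*(R')$ via the other half of the long exact sequence (the map $H^{2j}(R)\to H^{2j-|v|}(R')$). Your strategy of producing an even class in $R'$ and transferring it cannot see these, so the ``direct cochain-level construction'' you allude to would have to manufacture them from scratch---and you give no indication how.

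The paper avoids all of this by working with the quotient and, in the case where $[f]$ dies there, performing an explicit calculation with the $a$-adic expansion of a primitive for $f$ in $S=R[a]$, $da=x$. That concrete bookkeeping is what replaces your unfinished endgame.
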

\begin{proof}
Suppose the image of $[f]$ under the dual Hurewicz map is zero. The proof goes by induction on the dimension of $V$. Let $x \in V$ be an element of minimal codegree, therefore $dx=0$ and $h^\vee(x)\neq 0$. Eliminate $x$ by adding an even generator $S=(\Lambda(V\oplus \Q a), da=x)$ ($d$ being defined on $V$ as before). There is a distinguished triangle in $\Der (R)$:
\[ \Sigma^{|x|} S \to R \xrightarrow{\varphi} S \xrightarrow{\psi} \Sigma^{|x|+1} S.\]

Since $S$ is equivalent to the minimal Sullivan
algebra $R/(x)$, the induction assumption holds for $S$.
There are two possible cases.

In the first case $H_*\varphi[f] \neq 0$. The image of $\varphi[f]$ under the dual Hurewicz map is zero
because $\pi_*^\vee R \to \pi_*^\vee S$ is an epimorphism with kernel $\Q x$.
By the induction assumption there is a even degree element in
the cohomology of $S$ whose codegree is smaller than $|f|$. Let $[g]$ be such
an element of minimal degree. If $[g]$ is in the image of $\varphi$, then we
are done. If not, then $\psi[g] \neq 0$. But $\psi[g] \in H^{|g|-|x|+1}(S)$,
which contradicts the minimality of $|g|$ (note that $|g|-|x|+1 >0$,  since
otherwise $g$ is one degree below the minimal generators, which is impossible).

The remaining option is that $\varphi[f]=0$. Hence $f=dw$ for some $w \in S$. Write $w$ as
\[ w=a^n A_n+ a^{n-1} A_{n-1} + \cdots + aA_1+A_0, \]
where $A_i \in \Lambda V$. Clearly $f$ is homologous to $f-dA_0$, so without loss of generality we can assume that $A_0=0$. Also note that all the $A_i$ are of even codegrees smaller than the codegree of $f$ (in fact $|A_i|\leq |f|-|x|$). Calculating $dw$ gives
\[ dw=a^n d(A_n)+\sum_{i=1}^n a^{i-1}(dA_{i-1}+ixA_i).\]
Since $f\in \Lambda V$ we see that:
\begin{align*}
f&=x A_1 \\
dA_{i-1}&=-ixA_i \quad \mathrm{ for } \ i\geq 2\\
dA_n &=0
\end{align*}

Thus $A_n$ is a cocycle. If $A_n$ is not a coboundary in $R$, then we are done. Otherwise there is a $B_n$ so that
\[ A_n=dB_n\]
Now $dA_{n-1}=-nxA_n=-nxdB_n = d(nxB_n)$, whence $A_{n-1}-nxB_n$ is an even codegree cocycle. Again, if it is not a coboundary then we are done. Otherwise
there is a $B_{n-1}$ so that
\[A_{n-1}-nxB_n = dB_{n-1}.\]
Now
\begin{align*}
dA_{n-2}&=-(n-1)xA_{n-1}=-(n-1)x(nxB_n+dB_{n-1}) \\
&= -(n-1)xdB_{n-1} = (n-1)d(xB_{n-1}).
\end{align*}
So we see that $A_{n-2}-(n-1)xB_{n-1}$ is an even codegree cocycle. We continue in this manner until either we get the desired even codegree element in the cohomology of $R$, or we end with
\[A_1-2xB_2=dB_1.\]
But now $f=xA_1=x(2xB_2+dB_1)=xdB_1=-d(xB_1)$, i.e. $[f]=0$, which is a contradiction.
\end{proof}

Using the previous lemma we can now show that the second unravelling move may be
used to make an even generator become a cycle.
\begin{lemma}
\label{lem: eliminating a minimal even generator}
Let $R=(\Lambda V,d)$ be a minimal (simply connected) Sullivan algebra such that $V$ is finite dimensional. Then there is a sequence
$R=R_0 \to R_1 \to \cdots \to R_n$
of unravelling moves of the second type (i.e., moves to which Lemma~\ref{lem: Unraveling an even cocycle that is not a generator} applies)
such that $R_n$ is isomorphic to a minimal
Sullivan algebra: $R'_n=(\Lambda V',\delta)$, where there is a minimal even element
$a\in V'$ such that $\delta a=0$.
\end{lemma}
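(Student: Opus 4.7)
The plan is to isolate a minimal even generator $a$ of codegree $2m$ in $V$, and then iteratively apply the second unravelling move (Lemma~\ref{lem: Unraveling an even cocycle that is not a generator}), using the technical lemma (Lemma~\ref{lem: technical lemma for nci}) to guide each choice, until a change of basis turns $a$ (or a modification thereof) into a cocycle.

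For the setup, let $Q=(\Lambda V^{od,<2m},d)$ be the sub-CDGA of $R$ generated by the odd generators of codegree strictly less than $2m$. Minimality and simple connectedness imply that no monomial of codegree at most $2m+1$ in $\Lambda V$ can involve a generator of codegree $\geq 2m$: such a monomial would need an additional factor of codegree $\leq 1$, which is ruled out. In particular, $da\in Q$ is an odd cocycle of codegree $2m+1$. Moreover, any subsequent move of the second type can only introduce odd generators of codegree strictly below $2m$ (their differentials being even cocycles of codegree at most $2m$), so it suffices to work with the sub-algebra $R_0=(\Lambda(V^{od,<2m}\oplus \Q a),d)$; the higher-codegree structure of $V$ is unaffected and the moves can be lifted back to $R$ at the end.

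The recursion has two cases. If $[da]=0$ in $H^*(Q)$, we can find $b\in Q$ of codegree $2m$ with $db=da$ (automatically $b\in\Lambda^{\geq 2}$ since $Q$ has no generators in codegree $\geq 2m$). Performing the change of basis $a\mapsto a-b$ yields a minimal Sullivan algebra in which $a-b$ is a minimal even cocycle generator, and we are done. Otherwise $[da]\neq 0$, and $h^\vee_Q[da]=0$ automatically since the target of the dual Hurewicz map for $Q$ lives in codegrees $\leq 2m-1$ while $|da|=2m+1$. Lemma~\ref{lem: technical lemma for nci} then produces a nonzero even cocycle class $[g]\in H^{2l}(Q)$ with $0<l\leq m$, represented by a decomposable cocycle $g\in\Lambda^{\geq 2}V^{od,<2m}$ (any positive-codegree even cocycle in $Q$ is decomposable since $Q$ has no even generators). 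The class $[g]$ remains nonzero in $H^*(R_0)$: any coboundary representative of codegree $2l-1<2m$ must lie in $Q$ by the codegree argument above, contradicting $[g]\neq 0$ in $H^*(Q)$. Applying Lemma~\ref{lem: Unraveling an even cocycle that is not a generator} with $g$ gives a new simply connected minimal Sullivan algebra $R_1=R[y]$ with $dy=g$ and $|y|=2l-1<2m$, and we iterate.

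The main obstacle is termination. I expect it to follow from a careful refinement of the technical lemma's own inductive proof: in its Case~2 analysis, the chain-level identity $f=xA_1$ with $x$ a minimal odd generator of $Q$, together with the iterative construction of the produced cocycle $g$ in the form $A_l-(l+1)xB_{l+1}$, encodes a direct structural link between the obstruction $[da]$ and the class $[g]$. Tracking this link across iterations should show that each application of the second unravelling move strictly decreases a suitable invariant of the obstruction, for instance the length of the shortest decomposition of $[da]$ as a sum $\sum [g_i]\cdot[c_i]$ with $[g_i]$ even and $[c_i]$ an odd cocycle class (equivalently, the coboundary depth of $[da]$ modulo the ideal generated by the newly-introduced $y$'s). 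Combined with strong induction on $\dim V$, this produces the required finite sequence of moves, after which the change-of-basis case applies and completes the proof.
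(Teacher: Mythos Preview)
Your setup and recursive step are correct: you rightly isolate the odd sub-CDGA $Q=(\Lambda V^{od,<2m},d)$, observe that $da\in Q$ with $h^\vee_Q[da]=0$, and invoke Lemma~\ref{lem: technical lemma for nci} to produce an even class to kill via the second unravelling move. The gap is precisely where you flag it, and your proposed fix does not close it. The technical lemma only asserts the \emph{existence} of some nonzero even cohomology class in codegree $\le 2m$; it does not tie that class to $[da]$ in a way that would force your ``shortest decomposition length'' (or ``coboundary depth'') to drop. In Case~1 of the lemma's own proof the class arises by induction after quotienting out a generator, so the chain-level identity $f=xA_1$ you appeal to from Case~2 is simply unavailable in general, and the invariant you sketch is neither obviously well-defined nor obviously monotone.

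The repair is to reorganise the argument so that the technical lemma is invoked only once, at the end, in contrapositive form. Rather than letting the lemma pick which even class to kill at each step, kill \emph{all} even cohomology of $Q$ in codegrees $\le 2m$, always choosing a class of minimal codegree $2l$. Adjoining $y$ with $dy=g$ and $|y|=2l-1$ leaves codegrees below $2l-1$ untouched (simple connectivity forbids any product $y\cdot(\text{codegree }1)$), and in codegree $2l$ only adds the single new coboundary $g=dy$; hence $\dim H^{2l}$ drops by one while $\dim H^{2j}$ is unchanged for $j<l$. Finitely many such steps clear $H^{2l}$, after which the minimal nonzero even codegree strictly increases, and the whole process terminates. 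The enlarged odd sub-algebra $T$ then satisfies $H^{2j}(T)=0$ for all $0<2j\le 2m$, so the conclusion of Lemma~\ref{lem: technical lemma for nci} fails for $[da]\in H^{2m+1}(T)$, forcing its hypothesis to fail: $[da]=0$ in $H^*(T)$. Now your change of basis $a\mapsto a-b$ finishes. This is exactly the paper's route.
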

\begin{proof}
If there is a minimal even codegree element $a \in V$ such that $da=0$ we are done. Otherwise, choose some minimal even codegree element $a \in V$. Let $V_s \subset V$ be the subspace of codegrees smaller than $||a||$. Minimality of the Sullivan algebra implies that $da \in \Lambda V_s$.

Apply the second unravelling move (Lemma~\ref{lem: Unraveling an even cocycle that is not a generator}) to the even elements of $H^*(R)$ starting from the bottom and going up. Hence, let $g \in R$ be a minimal even degree cocycle that is not a coboundary. Applying Lemma~\ref{lem: Unraveling an even cocycle that is not a generator} to $g$ yields
\[ R_1=(\Lambda (V\oplus \Q x_g), dx_g=g)\]
and a map $R \to R_1$. We continue adding odd generators to $R$ in this way until there is no more homology in even codegrees less than or equal to $||a||$. Note that we need only kill cocycles composed of odd generators only, even in codegree $a$.

We end with a minimal Sullivan algebra $R_n=(\Lambda U,d)$, where $R_n$ has no even degree cohomology in dimension $||a||$ or lower.
Let $U_s \subset U$ be the subspace of codegrees smaller than $||a||$. Clearly $U_s$ has only odd degree elements. Define $T=(\Lambda U_s,d)$ to be the appropriate sub-CDGA of $R_n$. Then $T$ also has no cohomology in even codegrees $||a||$ or lower. Because $da$ is a cocyle in $T$, $h^\vee(da)=0$ and $H^{2j}(T)=0$ for $2j\leq ||a||$, it follows from Lemma~\ref{lem: technical lemma for nci} that $da$ is a coboundary in $T$. Thus $da=du$ for some $u \in \Lambda U_s$.

Now $a-u$ is a cocycle in $R_n$. So a change of variables: $a'=a-u$ yields a new minimal Sullivan algebra $R'_n=(\Lambda U',\delta)$ where a minimal codegree even generator $a'$ is a cocycle.
\end{proof}

\appendix

\section{Gorenstein rings and spaces}
\label{sec:Gorenstein}
This appendix discusses h-Gorenstein spaces, emphasizing the duality
this gives. The material comes from \cite{FHTGor}, \cite{DGI1} and
\cite{GL}, but the results have not been brought together
explicitly before.

\subsection{Contents}
Recall that a commutative local Noetherian ring $(R, \mathfrak{m},k)$ is {\em Cohen-Macaulay}
 if its depth is equal to its dimension, and that it is {\em Gorenstein} if
 $R$ is of finite injective dimension as a module. Furthermore, if $R$
is Gorenstein of dimension $r$
$$\Ext_R^*(k,R)= \Ext_R^r(k,R)=k. $$
Despite the definition, the important content of the Gorenstein condition
is a duality property (this will be a special case of one in the  CDGA
case below).

F\'elix-Halperin-Thomas \cite{FHTGor} have considered
the analogue for spaces (which we call the h-Gorenstein condition) at length.
We recall the definition below.  It transpires that for spaces
with finite dimensional cohomology (or finite category)
$X$ is h-Gorenstein if and only if $H^*(X)$ is Gorenstein. Our contribution
is to make explicit the duality statements in the positive dimensional
case following \cite{DGI1}. There is a structural duality statement
at the level of derived categories even when  $H^*(X)$ is not Gorenstein.
Thus if $X$ is h-Gorenstein, there are consequences for the
cohomology ring \cite{GL}: the cohomology ring $H^*(X)$ is
always generically Gorenstein, if it is Cohen-Macaulay,
it is automatically Gorenstein
(and hence its Hilbert series satisfies a functional equation), and if it
 has Cohen-Macaulay defect 1, its Hilbert series satisfies a
suitable pair of  functional equations.

\subsection{The definition.}

We recall the definition from \cite{FHTGor} in the language of \cite{DGI1}.
\begin{defn}
We say that a DGA $A$ is {\em h-Gorenstein of shift $a$} if
$\Hom_{A}(\Q,A)\simeq \Sigma^a \Q$. We say that a space $X$ is
{\em h-Gorenstein} if $C^*(X)$ is h-Gorenstein.
\end{defn}

We begin with the remark that the definition is an invariant of
quasi-isomorphism, so that any particular rational model of the
space $X$ may be used.

\subsection{Gorenstein duality.}

The purpose of the Gorenstein condition is to capture a duality
property. This takes some work to extract. Since the argument is in
\cite{DGI1} we will be brief. Since we are now mixing two sorts of
duality, it is essential to emphasize that the suspension $\Sigma^a$
is {\em homological}: it increases degrees by $a$ (i.e., it reduces
codegrees by $a$).

\begin{prop}\cite{DGI1}
If $A$ is h-Gorenstein of shift $a$ and $H^*(A)$ is 1-connected and
Noetherian, then
there is an equivalence
$$\cell_\Q A \simeq \Sigma^a A^{\vee}, $$
and hence a spectral sequence
$$\lc^*(H^*(A))\Rightarrow \Sigma^a H^*(A)^{\vee}.$$
\end{prop}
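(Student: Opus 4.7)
The plan is to exploit the two models for $\Q$-cellularization from Section~\ref{sec:Morita} in tandem with the h-Gorenstein hypothesis. First, realize $A$ as (quasi-isomorphic to) $C^*(X)$ for a simply connected rational space $X$ with Noetherian cohomology, so that Proposition \ref{prop:EMRS} identifies $\Hom_A(\Q,\Q) \simeq C_*(\Omega X)$. The Noetherian hypothesis ensures proxy-regularity (via the Koszul complex on a system of parameters, as remarked after the Morita model), and so by Corollary \ref{cor:GammaisCellk} we have a natural equivalence
$$\cell_\Q A \;\simeq\; \Gamma A \;\simeq\; \Hom_A(\Q,A) \tensor_{C_*(\Omega X)} \Q.$$

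The h-Gorenstein hypothesis then replaces $\Hom_A(\Q,A)$ with $\Sigma^a \Q$ as a right $C_*(\Omega X)$-module, giving
$$\cell_\Q A \;\simeq\; \Sigma^a\bigl(\Q \tensor_{C_*(\Omega X)} \Q\bigr).$$
The remaining step is to identify $\Q \tensor_{C_*(\Omega X)} \Q \simeq A^{\vee}$ as $A$-modules. This is the dual of Proposition \ref{prop:EMRS}: the derived tensor product is a bar construction computing the chains on the classifying space of $\Omega X$, namely $C_*(X) \simeq C^*(X)^{\vee} = A^{\vee}$. The 1-connectedness hypothesis is exactly what makes this Eilenberg--Moore / Rothenberg--Steenrod style identification valid, and the finite type assumption (implicit in our standing conventions) is needed to pass between $C_*(X)$ and $C^*(X)^{\vee}$.

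Combining these three steps gives $\cell_\Q A \simeq \Sigma^a A^{\vee}$. For the spectral sequence, apply Lemma~\ref{lem:growthintriangles}... no---apply the stable Koszul spectral sequence (the unnamed lemma just before Proposition~4.4) with $M=A$:
$$\lc^*(H^*(A)) \;\Rightarrow\; H^*(\Gamma A).$$
The abutment is now identified via $H^*(\Gamma A) = H^*(\cell_\Q A) \cong H^*(\Sigma^a A^{\vee}) = \Sigma^a H^*(A)^{\vee}$ (the last equality using that $H^*$ commutes with linear duality under the finiteness hypothesis), yielding the stated spectral sequence.

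The main obstacle is making the identification $\Q \tensor_{C_*(\Omega X)} \Q \simeq A^{\vee}$ rigorous as an equivalence of $A$-modules rather than merely of underlying complexes: one needs the $A$-action on the tensor product (coming from the right $C_*(\Omega X)$-module structure on the second factor via the Morita adjunction) to match the evident $A$-action on $A^{\vee}$. At the level of cohomology everything collapses to the classical statement $H_*(BG) = H_*(X)$, but the derived category version is what powers the argument, and it is this step that relies crucially on the 1-connectedness hypothesis and the machinery of \cite{DGI1}.
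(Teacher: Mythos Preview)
Your approach is essentially the same as the paper's---Morita model for $\cell_\Q$, h-Gorenstein hypothesis to identify $\Hom_A(\Q,A)$, then the stable Koszul model for the spectral sequence---but there is a genuine gap at the step where you write ``the h-Gorenstein hypothesis then replaces $\Hom_A(\Q,A)$ with $\Sigma^a\Q$ as a right $C_*(\Omega X)$-module.'' The definition of h-Gorenstein only gives this equivalence as an equivalence of $A$-modules (equivalently, of underlying complexes); it says nothing about the right $\cE$-module structure, where $\cE=C_*(\Omega X)$. Without knowing that the $\cE$-action on $\Hom_A(\Q,A)$ is the standard one on $\Sigma^a\Q$, you cannot feed it into $(-)\tensor_{\cE}\Q$ and expect the answer you want.

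The paper closes this gap by the observation that, since $A$ is $1$-connected, $\cE$ is connected and so there is a \emph{unique} $\cE$-module structure on $\Q$ up to equivalence. This is where the $1$-connectedness hypothesis actually enters, not (as you suggest) in the identification $\Q\tensor_{\cE}\Q\simeq A^{\vee}$. Having secured uniqueness, the paper avoids your direct computation of $\Q\tensor_{\cE}\Q$ altogether: it simply notes that $\Hom_A(\Q,\Sigma^a A^{\vee})\simeq\Sigma^a\Q$ as well (by tensor--Hom adjunction, $\Hom_A(\Q,A^{\vee})\simeq\Q^{\vee}=\Q$), so both $\Hom_A(\Q,A)$ and $\Hom_A(\Q,\Sigma^a A^{\vee})$ are $\Sigma^a\Q$ with the unique $\cE$-structure, hence equivalent as $\cE$-modules. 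Applying the Morita counit to each then gives $\cell_\Q A\simeq\cell_\Q(\Sigma^a A^{\vee})$, and the right-hand cellularization is the identity because $A^{\vee}$ is already $\Q$-cellular. This neatly sidesteps any need to check that the $A$-module structure on $\Q\tensor_{\cE}\Q$ matches that on $A^{\vee}$, which is the issue you flagged; that identification is correct, but the paper's route makes it unnecessary.
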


\begin{proof}
By definition, we have an equivalence
$\Hom_{A}(\Q,A)\simeq \Sigma^a \Q$ of $A$-modules. To proceed we need
to apply Morita theory, so we consider
 the endomorphism ring $\cE =\Hom_A(\Q,\Q)$.
There is a natural right $\cE$-module structure on
$\Hom_A(\Q,M)$ for any $M$, so the Gorenstein condition gives
an $\cE$-action on $\Q$. However, since
 $A$ is 1-connected, there is a unique  $\cE$-module structure
on  $\Q$. Thus the Gorenstein condition  gives  an equivalence
$$\Hom_{A}(\Q,A)\simeq \Hom_{A}(\Q,\Sigma^a A^{\vee})$$
of $\cE$-modules. Now apply $\tensor_{\cE} \Q$. Since $H^*(A)$ is
Noetherian, $\Q$ is proxy-small in the sense of \cite{DGI1},
and we may  use Morita theory to deduce
$$\cell_\Q (A) \simeq \cell_\Q (\Sigma^a A^{\vee}). $$
Since $A^{\vee}$ is $\Q$-cellular, the $\cell_\Q $ on the right may
be omitted.

This proves the first statement. For the second, by Corollary
\ref{cor:GammaisCellk}  the stable Koszul complex $\Gamma A$
provides a model for $\cell_\Q (A)$.
Using the natural filtration, we obtain the spectral sequence.
\end{proof}

We remark that the spectral sequence collapses if $H^*(A)$ is Cohen-Macaulay
to show $\lc^r(H^*(A))\cong \Sigma^{a+r} H^*(A)^{\vee}$ (where $r$ is the Krull
dimension of $H^*(A)$).  Thus $H^*(A)$ is also Gorenstein, and $a$ is the
classical $a$-invariant.

The spectral sequence also collapses if $H^*(A)$ is of Cohen-Macaulay defect 1,
to give an exact sequence
$$
0 \lra \lc^r(H^*(A))\lra
\Sigma^{a+r} H^*(A)^{\vee}\lra \Sigma \lc^{r-1}(H^*(A))\lra 0.$$
This is discussed in more structural terms in \cite[5.4]{GL}.

\subsection{Functional equations.}

It may be helpful to record the functional equations satisfied by
the Hilbert series $p_A(t)$ of an h-Gorenstein algebra $A$ when $H^*(A)$ is
a Noetherian ring of Cohen-Macaulay defect 0 or 1. The equations are
deduced from the existence of a local cohomology theorem in
\cite[Section 6]{GL}. Since $H^*(A)$ is cograded, we take $t$ to
be of codegree 1 (i.e., of degree $-1$).

If $H^*(A)$ is Cohen-Macaulay we have
$$p_A(1/t)=(-t)^rt^a p_A(t).$$

If $H^*(A)$ is of Cohen-Macaulay defect 1, we have a pair of functional
equations (introduced in the group theoretic context by Benson and
Carlson \cite{BC2})
$$p_A(1/t)-(-t)^rt^{a}p_A(t)=(-1)^{r-1}(1+t)\delta_A(t)$$
and
$$\delta_A (1/t)=(-t)^{r-1}t^a\delta_A (t), $$
and in fact $\delta_A (t)$ is the Hilbert series of $\lc^{r-1}(H^*(A))^{\vee}$.

\subsection{Examples.}
First we show that there are many familiar examples of h-Gorenstein
DGAs.

\begin{cor}\cite[3.2(ii)]{FHTGor}
\label{cor:GorishGor}
If $H^*(A)$ is Gorenstein  then $A$ is  h-Gorenstein.
\end{cor}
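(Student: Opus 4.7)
The plan is to show that the Gorenstein duality present on $H^*(A)$ lifts to $A$ itself by means of a standard Eilenberg--Moore--type spectral sequence. Recall that the target statement is $\Hom_A(\Q,A) \simeq \Sigma^a \Q$, i.e.\ that $\Ext^*_A(\Q,A)$ is concentrated in one total degree and equal to $\Q$ there.

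First I would set up the spectral sequence
\[
E_2^{p,q} = \Ext^{p,q}_{H^*(A)}(\Q, H^*(A)) \;\Longrightarrow\; H^{p+q}\bigl(\Hom_A(\Q, A)\bigr).
\]
This is constructed in the usual way: take a semifree resolution $P \to \Q$ of $\Q$ over $A$ whose underlying bigraded module is a free resolution of $\Q$ over $H^*(A)$, and filter $\Hom_A(P,A)$ by the (co)degree filtration inherited from $A$. The associated graded computes $\Hom_{H^*(A)}$ applied to a graded free resolution of $\Q$ over $H^*(A)$, which yields the stated $E_2$-page. Convergence is standard under the $1$-connected, finite-type, Noetherian hypotheses that are in force throughout the paper.

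Once the spectral sequence is available, the conclusion is essentially formal. By the classical characterization of Gorenstein local graded rings, $\Ext^{p,q}_{H^*(A)}(\Q, H^*(A))$ vanishes except in the single cohomological degree $p = r$ (where $r$ is the Krull dimension of $H^*(A)$), and in that column it is one-dimensional, concentrated in a single internal degree which gives the Gorenstein shift $a$. Hence $E_2$ is concentrated in a single bidegree, so the spectral sequence degenerates there and we obtain $H^*(\Hom_A(\Q,A)) \cong \Q$ in the single total degree $a$. Translating between homological and cohomological suspensions as in Section~\ref{sec:conventions}, this is precisely the statement $\Hom_A(\Q,A) \simeq \Sigma^a \Q$, so $A$ is h-Gorenstein of shift $a$.

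The main obstacle is really the bookkeeping: constructing a semifree resolution whose associated graded is a free resolution over $H^*(A)$, and verifying convergence of the resulting spectral sequence. Both steps are routine for $1$-connected CDGAs of finite type with Noetherian cohomology, but they do the work. Once past that, Gorensteinness of $H^*(A)$ immediately collapses the spectral sequence to the desired one-dimensional answer.
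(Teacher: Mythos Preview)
Your proposal is correct and follows essentially the same argument as the paper: both use the spectral sequence $\Ext^{*,*}_{H^*(A)}(\Q,H^*(A))\Rightarrow H^*(\Hom_A(\Q,A))$ and observe that the Gorenstein hypothesis on $H^*(A)$ forces the $E_2$-page to be one-dimensional, so the spectral sequence collapses and yields $\Hom_A(\Q,A)\simeq\Sigma^a\Q$. The only difference is that you spell out the construction and convergence issues a little more explicitly than the paper does.
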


\begin{proof}
If $H^*(A)$ is Gorenstein then the $E_2$-term of the spectral sequence
$$\Ext_{H^*(A)}^{*,*}(\Q,H^*(A))\Rightarrow H^*(\Hom_A(\Q,A))$$
degenerates to an isomorphism
$$\Ext^r_{H^*(A)}(\Q,H^*(A))=\Sigma^{r+a} \Q,  $$
where $a$ is the conventional $a$-invariant.
The spectral sequence therefore collapses to show $A$ is h-Gorenstein
with shift $a$.
\end{proof}

\begin{cor}\cite[3.6]{FHTGor}
If $H^*(A)$ is finite dimensional then
$A$ is  h-Gorenstein if and only if $H^*(A)$ is a Poincar\'e duality
algebra.
\end{cor}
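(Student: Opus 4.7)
The plan is to deduce the corollary from the two pieces of machinery already in place: Corollary \ref{cor:GorishGor}, which shows that Gorenstein cohomology implies $A$ is h-Gorenstein, together with the Gorenstein duality spectral sequence from the preceding proposition.

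For the ``Poincar\'e duality implies h-Gorenstein'' direction, first observe that a finite-dimensional connected graded-commutative algebra is a Poincar\'e duality algebra if and only if it is Gorenstein as a zero-dimensional local ring: the one-dimensional top-codegree component is the socle, and nondegeneracy of the cup product pairing is equivalent to the socle being one-dimensional. Corollary \ref{cor:GorishGor} then immediately yields that $A$ is h-Gorenstein, with shift $a$ equal to the top codegree.

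For the converse, suppose $A$ is h-Gorenstein of shift $a$. A finite-dimensional connected graded algebra is trivially Noetherian and 1-connected, so the preceding proposition provides a spectral sequence
$$\lc^*(H^*(A))\Rightarrow \Sigma^a H^*(A)^\vee.$$
Since $H^*(A)$ has Krull dimension zero, every element is $\mathfrak{m}$-power torsion, so $\Gamma_{\mathfrak{m}}$ is the identity functor, $\lc^0(H^*(A))=H^*(A)$, and $\lc^i(H^*(A))=0$ for $i\geq 1$. The spectral sequence therefore collapses at its $E_2$-term to an isomorphism $H^*(A)\cong \Sigma^a H^*(A)^\vee$ of $H^*(A)$-modules.

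The remaining step is to translate this cyclic self-duality into a Poincar\'e pairing. Since $H^*(A)^\vee$ is a cyclic $H^*(A)$-module, the image of $1$ under the isomorphism is a generator $\theta$ of the top-codegree component $H^a(A)$, which must be one-dimensional (as $H^*(A)$ is connected). The bilinear form $H^i(A)\otimes H^{a-i}(A)\to H^a(A)\cong \Q$ given by $x\otimes y\mapsto \theta(xy)$ is then nondegenerate because the module isomorphism precisely says that multiplication by $x$ into the top exhibits $H^*(A)^\vee$. This argument is routine rather than obstructive; the conceptual content has been packaged into the Gorenstein duality spectral sequence, and the one thing requiring attention is the collapse of the spectral sequence in Krull dimension zero, which is immediate.
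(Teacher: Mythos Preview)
Your proof is correct and follows essentially the same route as the paper: one direction via Corollary~\ref{cor:GorishGor}, the other via collapse of the Gorenstein duality spectral sequence in Krull dimension zero to obtain $H^*(A)\cong \Sigma^a H^*(A)^\vee$. The only slip is a sign: in the paper's conventions $\Sigma^a$ is \emph{homological} suspension, so the shift $a$ equals \emph{minus} the top codegree (formal dimension $-a$), not the top codegree itself; your final paragraph should accordingly locate the socle in $H^{-a}(A)$.
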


\begin{proof}
If $H^*(A)$ is a Poincar\'e duality algebra of formal dimension
$n$ then it is a zero dimensional Gorenstein ring with $a$-invariant
$-n$, so $A$ is h-Gorenstein with shift $-n$ by the previous corollary.

Conversely, if $A$ is h-Gorenstein of shift $a$, we have a Gorenstein
duality spectral sequence. Since $H^*(A)$ is finite dimensional,  it
is all torsion. Accordingly, $\lc^*(H^*(A))=H^*(A)$,
and the spectral sequence reads
$$H^*(A)=\Sigma^a H^*(A)^{\vee}$$
and $H^*(A)$ is a Poincar\'e duality algebra of formal dimension $-a$.
\end{proof}

One may use these to construct other examples which are h-Gorenstein
but not Gorenstein.

\begin{prop} \cite[4.3]{FHTGor}
Suppose  we have a fibration $F\lra E \lra B$ with
$F$ finite. If $F$ and $B$ are h-Gorenstein with
shifts $f$ and $b$ then $E $ is h-Gorenstein with
shift $e=f+b$.\qqed
\end{prop}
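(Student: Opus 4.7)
By definition, I need to show $\Hom_{C^*(E)}(\Q, C^*(E)) \simeq \Sigma^{f+b}\Q$. My plan is to factor the augmentation $C^*(E) \to \Q$ through $C^*(F)$ --- where $C^*(F) \simeq C^*(E) \otimes_{C^*(B)} \Q$ by the Eilenberg--Moore theorem, valid since $B$ is simply connected --- and apply change of rings twice, once for each Gorenstein hypothesis.

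First, the standard change-of-rings isomorphism for $C^*(E) \to C^*(F)$ gives
\[\Hom_{C^*(E)}(\Q, C^*(E)) \simeq \Hom_{C^*(F)}\bigl(\Q, \Hom_{C^*(E)}(C^*(F), C^*(E))\bigr),\]
and tensor-Hom adjunction combined with EM identifies
\[\Hom_{C^*(E)}(C^*(F), C^*(E)) \simeq \Hom_{C^*(B)}(\Q, C^*(E)).\]

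Next, because $H_*(F)$ is finite dimensional, Lemma \ref{lem:finfibissmall} applied to $E \to B$ tells us that $C^*(E)$ is small in $\Der(C^*(B))$. Hence the natural assembly map
\[\Hom_{C^*(B)}(\Q, C^*(B)) \otimes_{C^*(B)} C^*(E) \;\longrightarrow\; \Hom_{C^*(B)}(\Q, C^*(E)),\]
which is an equivalence when $C^*(E) = C^*(B)$ and is a natural transformation of exact functors of $C^*(E)$, is an equivalence on the whole thick subcategory generated by $C^*(B)$, and in particular on $C^*(E)$. Using the h-Gorenstein hypothesis $\Hom_{C^*(B)}(\Q, C^*(B)) \simeq \Sigma^b \Q$ and EM once more, this yields
\[\Hom_{C^*(B)}(\Q, C^*(E)) \simeq \Sigma^b \Q \otimes_{C^*(B)} C^*(E) \simeq \Sigma^b C^*(F).\]

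Substituting back and invoking the h-Gorenstein hypothesis on $F$ completes the argument:
\[\Hom_{C^*(E)}(\Q, C^*(E)) \simeq \Hom_{C^*(F)}(\Q, \Sigma^b C^*(F)) \simeq \Sigma^b \Hom_{C^*(F)}(\Q, C^*(F)) \simeq \Sigma^{f+b}\Q.\]
The main delicate point is to ensure that the identification $\Hom_{C^*(B)}(\Q, C^*(E)) \simeq \Sigma^b C^*(F)$ respects the $C^*(F)$-action which appears on the outside of the iterated Hom in the first step. On the left-hand side this $C^*(F)$-action arises via the bimodule structure on $C^*(F)$ viewed as a $C^*(E)$-module, and one must track it through the EM equivalence and verify it agrees with the self-action on $C^*(F)$ appearing on the right; the other manipulations are formal once the smallness of $C^*(E)$ over $C^*(B)$ is in hand.
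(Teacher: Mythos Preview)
Your argument is correct and is essentially the standard change-of-rings proof in the style of \cite{DGI1}; the key input, as you identify, is the smallness of $C^*(E)$ over $C^*(B)$ (Lemma~\ref{lem:finfibissmall}), which makes the assembly map an equivalence. Note, however, that the paper does not supply its own proof of this proposition: it is stated with a citation to \cite[4.3]{FHTGor} and marked with an immediate \verb|\qqed|, so there is nothing to compare against here beyond observing that your proof fits naturally with the derived-category framework the present paper adopts, whereas the original F\'elix--Halperin--Thomas argument works more directly with Sullivan models.
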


This allows us to construct innumerable examples. For example
any finite Postnikov system is h-Gorenstein \cite[3.4]{FHTGor},
so that in particular any sci space is h-Gorenstein.
A simple example will illustrate the duality.

\begin{example}
\label{eg:hGornotGor}
We construct a rational space $X$ in a fibration
$$S^3\times S^3 \lra X \lra \CP^{\infty} \times \CP^{\infty},  $$
so that $X$ is h-Gorenstein. We will calculate $H^*(X)$ and
observe that it is not Gorenstein.

Let $V$ be a graded vector space with two generators
$u,v$ in degree 2, and let $W$ be a graded vector space with two
generators in degree 4. The two 4-dimensional cohomology classes
 $u^2, uv$ in $H^*(KV)=\Q[u,v]$ define a map $KV\lra KW$,
and we let $X$ be the fibre, so we have a fibration
$$S^3\times S^3 \lra X \lra KV$$
as required.  By \cite{DGI1}, this is h-Gorenstein
with shift $-4$ (being the sum of the shift (viz $-6$) of
$S^3\times S^3$ and the shift (viz 2) of $KV$).

It is amusing to calculate the cohomology ring of
$X$. It is $\Q[u,v,p]/(u^2,uv,up, p^2)$ where $u,v$ and $p$
have degrees $2,2$ and $5$. The dimensions of its graded components
are $1,0,2,0,1,1,1,1,1,\ldots$  (i.e., its Hilbert series
is $p_X(t)=(1+t^5)/(1-t^2)  +t^2$, where $t$ is of codegree 1).

In calculating local cohomology it is useful to note that
${\frak{m}}=\sqrt{(v)}$.
The local cohomology is $\lc^0(H^*(X))=\Sigma_2\Q$ in degree 0
(so that $H^*(X)$ is not Cohen-Macaulay)
and as a $\Q[v]$-module $\lc^1(H^*(X))$ is
$\Q[v]^{\vee}\tensor (\Sigma^{-3} \Q \oplus \Sigma^2 \Q)$.
Since there is no higher local cohomology the local cohomology spectral
sequence necessarily collapses, and the resulting
exact sequence
$$0\lra \lc^1(H^*(X))\lra \Sigma^{-4}H^*(A)^{\vee} \lra \Sigma^{-2}\Q\lra 0$$
is consistent.

Since the Cohen-Macaulay defect here is 1, we have a pair of functional
equations
$$p_X(1/t)-(-t)t^{-4}p_X(t)=(1+t)\delta(t)$$
and
$$\delta (1/t)=t^4\delta (t).$$
Indeed, the first equation  gives $\delta (t)=t^{-2}$, which is indeed
the Hilbert series of $\lc^0(H^*(X))^{\vee}$, and it obviously satisfies
the second equation.
\end{example}

\end{document}